\newtheorem{theorem}{Theorem}[section]
\newtheorem{claim}{Claim}[theorem]
\newtheorem{lemma}[theorem]{Lemma}
\newtheorem{conjecture}[theorem]{Conjecture}
\theoremstyle{definition}
\newcommand{\bF}{\mathbb F}
\newcommand{\bZ}{\mathbb Z}
\newcommand{\cB}{\mathcal{B}}
\newcommand{\cF}{\mathcal{F}}
\newcommand{\cM}{\mathcal{M}}
\newcommand{\dcon}{/ \!\! /}
\DeclareMathOperator{\si}{si}
\DeclareMathOperator{\cl}{cl}
\DeclareMathOperator{\dist}{dist}
\DeclareMathOperator{\PG}{PG}
\DeclareMathOperator{\GF}{GF}
\newcommand{\del}{\!\setminus\!}
\newcommand{\con}{/}
\newcommand{\wh}{\widehat}
\begin{document}
\title[Spanning Cliques and Geometries]{The Structure of Matroids with a Spanning Clique or Projective Geometry}
\thanks{This research is partly supported by a Discovery Grant [203110-2011] from the Natural Sciences and Engineering Research Council
of Canada and a Research Grant [N00014-10-1-0851] from the Office of Naval Research.}
\author[Geelen]{Jim Geelen}
\author[Nelson]{Peter Nelson}
\address{Department of Combinatorics and Optimization, University of Waterloo, Canada}
\begin{abstract}
	Let $s,n \ge 2$ be integers. We give a qualitative structural description of every matroid $M$ that is spanned by a frame matroid of a complete graph and has no $U_{s,2s}$-minor and no rank-$n$ projective geometry minor, showing that every such matroid is `close' to a frame matroid. We also give a similar description of every matroid $M$ with a spanning projective geometry over a field $\GF(q)$ as a restriction and with no $U_{s,2s}$-minor and no $\PG(n,q')$-minor for any $q' > q$, showing that such an $M$ is `close' to a $\GF(q)$-representable matroid. 
\end{abstract}
\maketitle	

\section{Introduction}

In~[\ref{highlyconnected}], Geelen, Gerards, and Whittle describe the structure of 
highly-connected matroids in minor-closed classes of matroids represented over a fixed finite field.
In the same paper they conjecture extensions of their results to minor-closed classes
of matroids omitting a fixed uniform minor. The main results in this paper are motivated
by those conjectures, which we shall restate at the end of this introduction.
Here we are primarily concerned with the structure of matroids having either the cycle-matroid
of a complete graph or a projective geometry as spanning restriction.

An \emph{elementary projection} of a matroid $M$ is a matroid obtained from an extension of $M$ by contracting the new element, and an \emph{elementary lift} of $M$ is one obtained from a coextension by deleting the new element. Given two matroids $M$ and $N$ on the same ground set, we say that $N$ is a {\em distance-$k$ perturbation} of $M$ if $N$ can be obtained from 
$M$ by a sequence of $k$ elementary lifts and elementary projections. Perturbations play a natural role in considering
minor-closed classes of matroids omiting a uniform matroid. In particular, if $\cM$ is a minor-closed class of matroids that omits a uniform matroid, then the set of matroids  that are distance-$k$ perturbations of matroids in $\cM$ is also minor-closed and omits a uniform matroid; see Theorem~\ref{perturbthm}. Note that the uniform matroid $U_{r,n}$ is contained as a minor of $U_{s,2s}$ where $s=\max(r,n-r)$, so it suffices to consider classes omitting `balanced' uniform matroids 
$U_{s,2s}$. 

We start with the easier of our two main results which concerns matroids with a spanning projective geometry restriction.
\begin{theorem}\label{main2}
	For all integers $s,n \ge 2$, there exists an integer $k$ such that, for every prime power $q$ and every rank-$r$ matroid $M$ with a $\PG(r-1,q)$-restriction,  either $M$ has a $U_{s,2s}$-minor, $M$ has a $\PG(n-1,q')$-minor for some $q' > q$, or there is a distance-$k$ perturbation of $M$ that is $\GF(q)$-representable.
\end{theorem}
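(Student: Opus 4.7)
The plan is to exploit the structure of the spanning $\PG(r-1,q)$-restriction $R$ of $M$ and to classify the \emph{extra} elements $X = E(M) \setminus E(R)$. Since $R$ spans $M$, every $e \in X$ lies in $\cl_M(E(R))$. Call $e$ \emph{trivial} if it is a loop of $M$ or parallel in $M$ to some point of $R$, and \emph{non-trivial} otherwise. Because $\PG(r-1,q)$ is the largest simple rank-$r$ matroid representable over $\GF(q)$, the restriction $M|(E(R) \cup T)$, where $T$ is the set of trivial extras, is $\GF(q)$-representable. The problem therefore reduces to (i) bounding the number of non-trivial extras in $M$ by a function $k'(s,n)$, and then (ii) converting this set bound into a perturbation distance.

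For (i), to each non-trivial $e$ I would associate a minimal flat $F_e$ of $R$ with $e \in \cl_M(F_e)$; since $e$ is not parallel to any point of $R$, the $R$-rank of $F_e$ is at least $2$. In the rank-$2$ case, $F_e \cup \{e\}$ is a line of $M$ of size at least $q+2$, yielding a $U_{2,q+2}$-restriction; in higher rank one contracts an appropriate sub-flat of $F_e$ to reduce to this case and to expose $e$ as a ``$\GF(q')$-like'' point for some $q' > q$ atop the contracted PG. The heart of the argument is a Ramsey/pigeonhole step: if the number of non-trivial extras exceeds a threshold $k'(s,n)$, then after partitioning by local type and passing to a sufficiently large sub-collection, either many non-trivial extras align on a common contracted line (producing a $U_{s,2s}$-minor), or many jointly span, after suitable contractions, a sub-geometry isomorphic to $\PG(n-1,q')$ for some $q' > q$.

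Granted (i), step (ii) is more routine: let $W$ denote the set of non-trivial extras and let $M_0$ be the $\GF(q)$-representable matroid on $E(M)$ obtained from $M|(E(R) \cup T)$ by placing each $w \in W$ in parallel to a fixed point of $R$, so that $M$ and $M_0$ agree on $E(M) \setminus W$. For each $w \in W$, the $M$-position can be swapped in from the $M_0$-position via a single coextension by an auxiliary element that encodes both positions, whose deletion and contraction realise $M$ and $M_0$ respectively; iterating over $w \in W$ bounds the perturbation distance by $O(|W|)$, giving the required $k$. The main obstacle is step (i): producing the forbidden minors from many non-trivial extras requires a careful Ramsey argument that controls the interactions between simultaneously contracted sub-flats and that cleanly upgrades a single non-trivial extra into an identifiable $\GF(q')$-point in some contraction, and this is where all the quantitative difficulty lies.
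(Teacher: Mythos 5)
Your reduction in step (i) is not achievable: the number of non-trivial extra elements of $M$ (elements that are neither loops nor parallel to points of $R$) cannot be bounded by any function of $s$ and $n$. Here is a counterexample. Represent $\PG(r-1,q)$ over a large extension field of $\GF(q)$, add one generic column $c$, and for each point $p_i$ of $R$ add a column $w_i = p_i + t_ic$ with the $t_i$ generic. The resulting matroid $M$ has the spanning $\PG(r-1,q)$-restriction $R$, and each $w_i$ lies on the line through $c$ and $p_i$ but is parallel to no point of $R$, so $M$ has up to $\tfrac{q^r-1}{q-1}$ non-trivial extras. Yet $M$ is within distance roughly $6$ of a $\GF(q)$-representable matroid (contracting $c$ puts each $w_i$ in parallel with $p_i$, so a $\{c\}$-shift followed by making $c$ a loop does the job), and hence by Theorem~\ref{perturbthm} it has no $U_{s,2s}$-minor and no $\PG(n-1,q')$-minor once $s$ and $n$ are large relative to $q$. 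So $M$ satisfies the hypotheses and the conclusion of the theorem, but defeats any argument that tries to extract a forbidden minor from ``many'' non-trivial extras, and also defeats your step (ii), which perturbs once per non-trivial element and so would give distance $\Theta(|W|)$, unbounded.

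The correct target, and what the paper proves (Lemma~\ref{localrep}), is not a bound on the number of non-trivial elements but the existence of a \emph{bounded-rank contraction set} $C$ after which every non-loop becomes parallel to a point of $R$; the perturbation is then realised as a single $C$-shift of cost $4|C|$, independent of how many elements get moved. Finding $C$ is itself not a Ramsey/pigeonhole argument on individual extras: the paper takes a maximal set $F$ skew to every rank-$(s-1)$ flat of $R$ and bounds $r(F)$ via the exponential density theorem (Theorem~\ref{exppgdensity}), then takes a maximal $(q,h,t)$-stack restriction of $M\con F$ and bounds its rank via Lemma~\ref{stackwin}; both are substantial external results about exponentially dense matroids, not elementary combinatorics. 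Your sketch of step (i) gestures at the right local picture (a non-trivial extra on a line gives $U_{2,q+2}$), but the global argument has to be reorganised around contraction sets rather than counting, so the proposal as written has a genuine structural gap.
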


A matroid $M$ is \emph{framed by $B$} if $B$ is a basis of $M$ and each element of $M$ is spanned by a subset of $B$ with at most two elements. A \emph{$B$-clique} is a matroid framed by $B$ so that each pair of distinct elements in $B$ is contained in a triangle. The second of our main results concerns matroids with a spanning $B$-clique restriction.
\begin{theorem}\label{main1}
	For all integers $s,n \ge 2$, there exists an integer $k$ such that, if $M$ is a matroid with a spanning $B$-clique restriction, then either $M$ has a $U_{s,2s}$-minor, $M$ has a rank-$n$ projective geometry minor, or there is a distance-$k$ perturbation of $M$ that is framed by $B$. 
\end{theorem}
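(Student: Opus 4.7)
The plan is to partition the ground set of $M$ according to how each element sees the basis $B$, show via Ramsey and extremal arguments that elements violating the framed-by-$B$ condition must either be few in number or produce one of the two forbidden minors, and finally absorb the bounded set of offending elements by a controlled sequence of elementary lifts and projections.

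First I would classify each $e \in E(M)$ by its minimal support in $B$: the smallest $X_e \subseteq B$ with $e \in \cl_M(X_e)$. Elements with $|X_e| \le 2$ are already framed by $B$, so the whole question is about elements with $|X_e| \ge 3$; every element of $B$ or of the $B$-clique is automatically framed, so only `true extension' points are candidates. For such an element $e$, the contraction $N_e = M \con (B \setminus X_e)$ has rank $|X_e|$, contains the $X_e$-clique as a spanning restriction, and contains $e$ as a point lying in none of the rank-$2$ flats spanned by pairs from $X_e$. This rigidity forces $N_e$ to contain richer structure than any frame matroid would.

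Next I would apply hypergraph Ramsey, to the pairs and triples inside $X_e$ and more globally across the non-framed elements collectively, to extract a large uniform sub-clique $B' \subseteq B$: one on which every non-framed element either has support disjoint from $B'$ or else interacts symmetrically with every pair from $B'$. On $B'$ the uniform non-framed elements yield one of two structures. If the rank-$2$ flats of the resulting minor are small, then $N_e | (B' \cup \{e\})$ is close to a projective geometry over some $\GF(q)$, and stacking several such non-framed elements builds the rank-$n$ projective geometry minor. If instead the rank-$2$ flats grow with $|B'|$, then standard extremal matroid theory (with the threshold controlled exactly by the exclusion of $U_{s,2s}$) recovers $U_{s,2s}$ as a minor directly. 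Excluding both forbidden minors therefore bounds the number of non-framed elements by a function of $s$ and $n$ alone.

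The hard part will be the final absorption: taking the bounded set of non-framed elements and showing that they can all be simultaneously corrected by a joint sequence of elementary lifts and projections whose length depends only on $s$ and $n$. Because a single perturbation is a global rank-level modification of $M$, one cannot simply fix each bad element independently; instead I would construct a single coextension of $M$ whose new elements serve as `correction coordinates' for every bad element at once, and then project away to obtain a $B$-framed matroid. The analogy with the matrix perturbations used in~[\ref{highlyconnected}] is close, but translating to the frame-matroid setting, where the ambient structure is a graph on $B$ rather than a vector space, will require careful bookkeeping of the incidences between bad elements and bad pairs from $B$; I expect this bookkeeping, rather than any individual Ramsey or extremal estimate, to be the principal technical obstacle.
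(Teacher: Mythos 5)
There is a genuine gap at the pivot of your argument: the claim that excluding $U_{s,2s}$ and the rank-$n$ projective geometry ``bounds the number of non-framed elements by a function of $s$ and $n$ alone.'' This is false. Take a $B$-clique $K$ of rank $r$, add one element $g$ freely in the span of $B$, and for each $b \in B$ add a point $f_b$ freely on the line spanned by $\{g,b\}$. Each $f_b$ has minimal support of size at least $3$ in $B$, so there are $r$ non-framed elements; yet this matroid has no large uniform or projective geometry minor and is within distance $O(1)$ of a $B$-framed matroid (project by $g$ and each $f_b$ falls into parallel with $b$). So the set of offending elements can be unbounded, and your final ``absorption of a bounded set'' step starts from a false premise. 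What must be bounded is not the number of bad elements but the rank of a set whose contraction simultaneously tames all of them. The paper's certificate is chosen exactly for this: a maximal set $X$ that is independent in $M \con (B-B_i)$ for three disjoint sets $B_1,B_2,B_3 \subseteq B$ of size $|X|$. Lemma~\ref{threenonsingular} shows $|X|$ is bounded (this is where the $U_{s,2s}$/projective-geometry dichotomy is actually used, via Theorem~\ref{selfdual} applied three times and the $a$-complete machinery), and then the \emph{maximality} of $X$ forces, via Hall's theorem applied to the three fundamental-circuit supports $H_1(f),H_2(f),H_3(f)$ of every remaining element $f$, that each such $f$ either is framed by $\wh{B}$ after contracting $B_2 \cup B_3 \cup X$ or becomes a loop or parallel element. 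The unboundedly many corrected elements are then handled all at once by the bounded-rank projection $M \dcon C_1$ and a $C_2$-shift.

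A secondary problem is the step producing the forbidden minors from a single non-framed element $e$: the restriction $N_e|(B'\cup\{e\})$, a clique plus one general-position point, carries essentially no density and is nowhere near a projective geometry; the paper needs a \emph{large} triply-independent family $X$ interacting with three disjoint blocks of $B$ before it can build even a $3$-complete minor (Lemma~\ref{buildcomplete}), and must then bootstrap through $a$-completeness for growing $a$ (Lemma~\ref{upgradecomplete}) to reach the density threshold of Theorem~\ref{pgdensity}. Your appeal to ``hypergraph Ramsey'' and ``standard extremal matroid theory'' does not supply this bootstrapping, and without it the dichotomy (few bad elements versus a forbidden minor) cannot be established even in the corrected form.
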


Theorem~\ref{main1} has an interesting special case where the spanning clique is `bicircular'; in this case we can avoid 
the outcome giving a large projective geometry as a minor. Given a graph $G = (V,E)$, we write $B^+(G)$ for the \emph{framed bicircular matroid} of $G$; this is the matroid with ground set $E \cup V$, in which a set $X$ is independent if and only if $|X \cap (E(H) \cup V(H))| \le |V(H)|$ for each subgraph $H$ of $G$. Equivalently,  $B^+(G)$ is constructed from the free matroid on $V$ by adding each $e = v_1v_2 \in E$ freely to the line between the basis elements $v_1$ and $v_2$. 
Note that $B^+(K_n)$ is a $V(K_n)$-clique. The \emph{bicircular matroid} of $G$, in the more usual sense, is just the matroid $B(G) = B^+(G) \del V$. As a corollary of Theorems~\ref{main1} and~\ref{main2}, we get the following strengthening of Theorem~\ref{main1}. 

\begin{theorem}\label{bicirc}
		For every integer $s \ge 2$ there is an integer $k$ such that, if $M$ is a rank-$r$ matroid with no $U_{s,2s}$-minor and with a $B^+(K_r)$-restriction framed by $B$,  then there is a distance-$k$ perturbation of $M$ that is framed by $B$.
\end{theorem}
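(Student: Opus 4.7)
The plan is to deduce Theorem~\ref{bicirc} from Theorem~\ref{main1} by ruling out the projective geometry outcome. Since a $B^+(K_r)$-restriction of $M$ is in particular a spanning $V(K_r)$-clique restriction, Theorem~\ref{main1} applied with parameters $s$ and $n$ gives the desired framed distance-$k$ perturbation of $M$ as soon as $M$ has no rank-$n$ projective geometry minor. It therefore suffices to exhibit an integer $n = n(s)$ such that any matroid $M$ satisfying the hypotheses of Theorem~\ref{bicirc} has no rank-$n$ projective geometry minor.

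Suppose $M$ has a $\PG(n-1, q)$-minor. First, the field order $q$ is bounded in terms of $s$: the normal rational curve places $q + 1$ points in general position in any rank-$s$ flat of $\PG(n-1, q)$, so for $q \ge 2s - 1$ one obtains a $U_{s,2s}$-restriction of the geometry, hence a $U_{s,2s}$-minor of $M$, contradicting the hypothesis. Let $q^*$ be the largest $q \le 2s - 2$ for which $M$ has a rank-$n$ projective geometry minor over $\GF(q)$, and pass to a minor $N$ of $M$ in which $\PG(n-1, q^*)$ appears as a spanning restriction. By the maximality of $q^*$, $N$ has no $\PG(n-1, q')$-minor for $q' > q^*$, and $N$ inherits from $M$ the absence of a $U_{s,2s}$-minor. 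Theorem~\ref{main2} applied to $N$ with parameter pair $(s, n)$ then yields a constant $k_0 = k_0(s, n)$ and a $\GF(q^*)$-representable matroid $N'$ on the same ground set within distance $k_0$ of $N$.

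The remaining task, which I expect to be the main obstacle, is to exploit the $B^+(K_r)$-restriction of $M$ against this representable approximation in order to reach a contradiction. The key observation is that contracting any single basis vertex from $B^+(K_r)$ and simplifying produces $B^+(K_{r-1})$, so if the contraction set used to pass from $M$ to $N$ is chosen to consist largely of basis vertices then $N$ inherits, after simplification, a $B^+(K_{n'})$-restriction with $n'$ tending to infinity with $n$. Restricting the perturbation relating $N$ to $N'$ to the ground set of this $B^+(K_{n'})$-restriction realises $B^+(K_{n'})$ itself as a distance-$k_0$ perturbation of a $\GF(q^*)$-representable matroid. The crux is to show this is impossible once $n'$ is sufficiently large in terms of $s$ and $k_0$: the framed bicircular clique is too rich to be accessible by a bounded sequence of elementary lifts and projections from a $\GF(q^*)$-representable matroid when $q^*$ is small. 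Quantitatively, one can try to argue via the $\binom{n'}{2}$ clique triangles $\{v, u, vu\}$ of $B^+(K_{n'})$: each elementary operation can meaningfully alter only a bounded-rank portion of the matroid, so the bulk of these triangles survives all $k_0$ operations intact and collectively imposes rank relations on $N'$ incompatible with $\GF(q^*)$-representability for small $q^*$---in particular exploiting the fact that $B^+(K_{n'})$ itself fails to be $\GF(q^*)$-representable once $n'$ is large relative to $q^*$. Making this last step quantitative determines the required $n = n(s)$ and completes the reduction to Theorem~\ref{main1}.
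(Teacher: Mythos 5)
Your overall architecture matches the paper's: apply Theorem~\ref{main1} (in the paper, its technical form, Theorem~\ref{maintech}) and rule out the projective-geometry outcome by playing the bicircular structure against the machinery behind Theorem~\ref{main2}. The bound on the field order and the maximality argument for $q^*$ are fine, as is the observation that restricting a perturbation sequence to a subset does not increase perturbation distance. But the decisive step --- deriving a contradiction from the coexistence of a $B^+(K_{n'})$-restriction with a $\GF(q^*)$-representable approximation --- is exactly where your argument stops, and the sketch you offer for it does not work as stated. The lines of $B^+(K_{n'})$ for a \emph{simple} complete graph contain only three points each, so they are individually consistent with $\GF(q)$-representability for every $q \ge 2$; counting the $\binom{n'}{2}$ clique triangles, or asserting that ``the bulk of these triangles survives all $k_0$ operations intact,'' yields no contradiction with representability over a small field. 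What is actually needed is a long line (a $U_{2,q+2}$-restriction surviving into an appropriate minor), and the simple clique does not supply one. Moreover, ``each elementary operation can meaningfully alter only a bounded-rank portion of the matroid'' is not usable as stated: a single elementary projection changes the rank of every set by at most one, but it can change the dependencies among all the triangles simultaneously.

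The paper closes this gap (Lemma~\ref{bicircpg}) with two moves that differ from your sketch. First, instead of the simple clique it contracts $K_n$ down to a multigraph $K$ on $n'$ vertices with $2s$ parallel edges between each pair, so that $B^+(K)$ has $(2s+2)$-point lines, which are too long for $\GF(q)$ when $q < 2s$. Second, instead of invoking Theorem~\ref{main2} and reasoning about perturbation distance, it uses the stronger local statement Lemma~\ref{localrep}: there is a set $C$ of bounded size such that every rank-$2$ restriction of the relevant minor contracted by $C$ is $\GF(q)$-representable; since $|C|$ is bounded and $K$ has enough vertices, some long line of $B^+(K)$ is skew to $C$ and survives the contraction, giving the contradiction. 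If you wish to keep your perturbation-based formulation, you must actually prove that $B^+(K_{n'})$ is not a distance-$k_0$ perturbation of any $\GF(q^*)$-representable matroid once $n'$ is large relative to $k_0$ and $q^*$; that assertion is true, but it is a nontrivial claim of essentially the same depth as the step you are trying to carry out, so as written the proposal defers the essential difficulty to an unproven lemma.
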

	
	In Section~\ref{selfdualsection}, we prove a result of independent interest, Theorem~\ref{selfdual}, that finds the unavoidable minors for arbitrary large matroids that have two disjoint bases. A corollary is the following, which finds one of two specific minors in any matroid that is not close to being `trivial'. 
	
	\begin{theorem}\label{unavoidable}
		Let $s \ge 0$ be an integer and $k = 4^{4^{2s^2}}$. Then, for each matroid  $M$, either
		\begin{itemize}
			\item $M$ has a $U_{s,2s}$-minor, 
			\item $M$ has a minor isomorphic to the direct sum of $s$ copies of $U_{1,2}$, or 
			\item there is a distance-$k$ perturbation of $M$ whose elements are all loops or coloops. 
		\end{itemize}
	\end{theorem}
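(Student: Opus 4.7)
The plan is to derive this corollary from Theorem~\ref{selfdual}. I expect that theorem to say that every matroid with two disjoint bases and rank at least some $f(s)\approx 4^{2s^2}$ has either a $U_{s,2s}$-minor or a minor isomorphic to $s$ disjoint copies of $U_{1,2}$. Granting this, the hypothesis on $M$ (no $U_{s,2s}$-minor, no $s$-fold $U_{1,2}$-sum minor) immediately implies that every minor of $M$ with two disjoint bases has rank at most $f(s)$.

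A first step is to reduce to the case where $M$ has no loops and no coloops. Loops and coloops are already of the desired form, and any perturbation of the ``core'' $M \del L / C$, where $L$ is the set of loops and $C$ the set of coloops of $M$, lifts canonically to a perturbation of $M$ of the same distance that fixes $L \cup C$ elementwise. So I may assume $M$ has neither loops nor coloops, and the goal becomes perturbing $M$ to the empty matroid in at most $k$ steps.

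Next I split into two cases. If $M$ has two disjoint bases then $r(M) \le f(s)$ by the input from Theorem~\ref{selfdual}. Any rank-$r$ matroid on a given ground set is within perturbation distance $r$ of a free matroid on that set, because an elementary projection of a free matroid can realize any single elementary quotient step and iterating yields an arbitrary rank-$r$ matroid from the free one in $r$ projections; since a free matroid is trivial, this case is finished in at most $f(s)$ perturbations. If instead $M$ has no two disjoint bases, Edmonds' matroid-partition theorem furnishes $X \subseteq E(M)$ with $|E(M)\del X| < 2(r(M) - r(X))$; I would pass to a suitable contraction or deletion exploiting $X$, charge an elementary lift or projection for the transition, and iterate. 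After boundedly many rounds the reduced matroid either becomes empty or acquires two disjoint bases, at which point the preceding case applies.

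The hard part will be translating contractions and deletions of $M$, which shrink the ground set, into elementary lifts and projections, which do not, and simultaneously tracking the cumulative perturbation cost through the iteration. The doubly exponential shape of $k = 4^{4^{2s^2}}$ presumably arises here: each stage contributes roughly $f(s)$ to the perturbation count, while the number of stages before the matroid-partition gap closes is itself bounded in terms of $f(s)$, producing the outer exponential.
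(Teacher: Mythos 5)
Your reduction to the loopless, coloopless case and your reading of Theorem~\ref{selfdual} (two disjoint bases plus rank at least $4^{s(s4^s)^s}$ forces one of the two minors) are both fine, and your first case is essentially right once a direction error is fixed: a rank-$r$ matroid is within distance $r$ of the \emph{rank-zero} matroid on its ground set (all loops), not of the free matroid --- getting from the free matroid on $E$ down to a rank-$r$ matroid takes $|E|-r$ projections, which is unbounded. That is a slip rather than a gap.

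The gap is in your second case. When $M$ has no two disjoint bases you propose to iterate contractions and deletions guided by the matroid-union deficiency set $X$, but you give no argument that the number of rounds is bounded in terms of $s$, and none is apparent: for instance $U_{n-1,n}$ has no $U_{s,2s}$-minor, no $sU_{1,2}$-minor, no loops or coloops, and no two disjoint bases, yet has unbounded rank, so whatever invariant your iteration decreases must collapse in $O_s(1)$ steps on such examples and you have not identified one. The paper avoids the dichotomy ``two disjoint bases or not'' entirely with a one-shot argument: fix a basis $B$ and set $X = E(M)-B$. If $r_M(X) \ge h$ then $M$ has a rank-$h$ minor with two disjoint bases (one inside $B$, one inside $X$) and Theorem~\ref{selfdual} applies; otherwise contract a basis $X'$ of $M|X$ (at most $h$ elements), which turns $X - X'$ into loops and leaves $B$ with corank at most $h$ in $M \con X'$, so deleting at most $h$ suitable elements of $B$ makes the remainder coloops. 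This also shows your guess about the source of the bound is off: $k = 4^{4^{2s^2}}$ comes directly from the hypothesis $r \ge 4^{s(s4^s)^t}$ of Theorem~\ref{selfdual} with $t=s$, plus the bounded contract/delete sets, not from compounding costs over many stages.
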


	\subsection*{Structure Theory}
	Theorems~\ref{main1} and~\ref{main2} fit into a larger, mostly conjectural, regime of structure theory in minor-closed classes omitting a uniform matroid. The first of these conjectures predicts the  unavoidable minors for very highly connected matroids. A matroid is \emph{vertically $k$-connected} if, for every $A \subseteq E(M)$ with $\lambda_M(A) < k-1$, either $A$ or $E(M)-A$ is spanning in $M$. The following conjecture was posed in~[\ref{highlyconnected}].	
	\begin{conjecture}\label{highconn}
		For all $n \ge 2$ there is an integer $k$ such that, if $M$ is a vertically $k$-connected matroid with $|M|\ge 2k$, then $M$ or $M^*$ has a minor isomorphic to one of $M(K_n),B(K_n),$ or $U_{n,2n}$. 
	\end{conjecture}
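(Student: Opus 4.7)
The plan is to reduce, via an unavoidable-minor argument combined with Theorems~\ref{main1} and~\ref{main2}, to the case where some minor of $M$ or $M^{*}$ carries a spanning clique or a spanning projective-geometry restriction, and then to read off one of the three named minors from the structural conclusions. The conjecture itself is open; what follows is the natural program suggested by the tools developed in this paper.

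First I would assume $M$ has no $U_{n,2n}$-minor, else we are done, and apply Theorem~\ref{unavoidable} with $s=n$. The first outcome is ruled out by assumption. The third outcome --- that $M$ is a bounded-distance perturbation of a loop/coloop matroid --- should be incompatible with the hypotheses, since elementary lifts and projections alter vertical connectivity by a bounded amount, whereas loop/coloop matroids have vertical connectivity at most $1$; so for $k$ chosen large in terms of the perturbation distance, this outcome cannot occur for a vertically $k$-connected matroid with $|M|\ge 2k$. That leaves $n$ pairwise disjoint copies of $U_{1,2}$ as a minor of $M$, and vertical $k$-connectivity of $M$ together with $|M|\ge 2k$ should supply enough ``connector'' elements to fuse these $2$-circuits, after a bounded-rank contraction, into a spanning $B$-clique or spanning $\PG(r-1,q)$-restriction on a minor $N$ of $M$ or $M^{*}$. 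Now apply Theorem~\ref{main1} or Theorem~\ref{main2}: in each case one branch yields a $U_{n,2n}$-minor, another yields a rank-$n$ projective geometry minor (which contains $M(K_n)$ as a restriction), and the remaining branch says that $N$ is a bounded-distance perturbation of either a $\GF(q)$-representable matroid or a matroid framed by $B$. In those cases, vertical connectivity of $N$ is inherited, up to the perturbation, by the underlying frame or representable matroid, which must therefore contain $M(K_n)$ or $B(K_n)$ by a standard gain-graph or Kung-type density argument (with $B(K_n)$ arising precisely when the induced gain group on the extracted complete subgraph is nontrivial).

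The main obstacle is the promotion of ``$n$ disjoint copies of $U_{1,2}$ plus high vertical connectivity'' into a spanning clique or spanning projective-geometry restriction in a minor. No published theorem achieves this directly; Theorem~\ref{selfdual} of Section~\ref{selfdualsection}, which supplies unavoidable minors from two disjoint bases, is the closest available tool. A realistic path forward would be (i) a matroid-packing step showing that vertical $k$-connectivity together with $|M|\ge 2k$ forces many disjoint bases, and then (ii) a promotion of many disjoint bases to a full spanning-clique or $\PG$-restriction via an iterated application of a self-dual unavoidable-minor result in the spirit of Theorem~\ref{selfdual}. Both steps appear to require ideas substantially beyond those developed here, which is consistent with the statement's status as a conjecture.
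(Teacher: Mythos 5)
The statement you were asked about is Conjecture~\ref{highconn}; it is open, and the paper contains no proof of it (it merely records the conjecture from~[\ref{highlyconnected}]), so there is nothing to compare your argument against. You correctly recognize this, and your submission is a program rather than a proof; as a proof it has a genuine gap, which you yourself identify: the promotion of ``an $nU_{1,2}$-minor plus vertical $k$-connectivity'' to a minor of $M$ or $M^*$ carrying a spanning $B$-clique or spanning $\PG(r-1,q)$-restriction. Nothing in this paper, and nothing standard, supplies that step, and it is worth being clear that it is not a technical lemma one could hope to fill in routinely: it is essentially the entire content of the conjecture. The theorems of this paper run in the opposite direction from what you need --- they take the existence of a spanning clique or spanning projective geometry as a \emph{hypothesis} and describe the resulting structure; they give no mechanism for producing such a spanning restriction from connectivity alone. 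The known route to results of this type (for matroids over a fixed finite field) goes through the Geelen--Gerards--Whittle matroid-minors machinery (tangles, grid-type theorems), not through Theorem~\ref{selfdual}-style unavoidable minors for disjoint bases.

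Two further soft spots, even granting that step. First, your dismissal of the third outcome of Theorem~\ref{unavoidable} needs care: a low-rank matroid such as $U_{1,2k}$ is vertically $k$-connected with $|M| \ge 2k$, is a bounded perturbation of an all-loops-and-coloops matroid, and is not obviously excluded by your connectivity argument; one must separately handle the regime where $r(M)$ or $r(M^*)$ is small. Second, even after reaching the third outcome of Theorem~\ref{main1} or~\ref{main2}, extracting $M(K_n)$ or $B(K_n)$ from a highly connected frame (or representable) matroid is itself a nontrivial unavoidable-minor theorem; ``a standard gain-graph or Kung-type density argument'' does not discharge it, since density arguments produce minors of the perturbed matroid $N$, and one must still transfer them back to $M$ (Theorem~\ref{perturbthm} helps only for uniform and projective-geometry minors, not for $M(K_n)$ or $B(K_n)$). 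So the proposal is a reasonable sketch of a research program, but it is not, and does not claim to be, a proof.
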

	
	While $M(K_n)^*$ and $B(K_n)^*$ are not  even  vertically $4$-connected themselves, they do contain minors with high vertical connectivity; indeed, for each $k$ there is a graph $G$ so that $M(G)^*$ and $B(G)^*$ are both vertically $k$-connected. 
	To obtain such a graph one can take  a $k$-regular Cayley graph with girth at least $k$ (see Margulis~[\ref{margulis}]
	for the construction); by~[\ref{gr}, Theorem 3.4.2], these graphs are $k$-connected.
	
	 In any case, the dual outcomes in Conjecture~\ref{highconn} are perhaps not needed if $M$ has large co-rank.
	
	\begin{conjecture}
		For all $n \ge 2$ there is an integer $k$ so that, if $M$ is a vertically $k$-connected matroid with $|M|\ge 2k$ and $r(M^*) \ge r(M)$, then $M$ has a minor isomorphic to one of $M(K_n)$, $B(K_n)$ or $U_{n,2n}$. 
	\end{conjecture}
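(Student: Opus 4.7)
The plan is to reduce to Conjecture~\ref{highconn} and use the corank hypothesis $r(M^*) \ge r(M)$ to eliminate the dual outcomes. Applying Conjecture~\ref{highconn} with parameter $n$ (choosing $k$ at least as large as the integer it supplies) yields that $M$ or $M^*$ has a minor isomorphic to $M(K_n)$, $B(K_n)$, or $U_{n,2n}$. If the minor appears in $M$ we are done. Since $U_{n,2n}$ is self-dual, the remaining cases are those in which $M$ contains $M(K_n)^*$ or $B(K_n)^*$ as a minor but no $M(K_n)$-, $B(K_n)$-, or $U_{n,2n}$-minor; the task is to convert such a dual witness into one of the three listed minors using $r(M^*) \ge r(M)$ and vertical $k$-connectivity.

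To handle these residual cases, I would apply Conjecture~\ref{highconn} with a larger parameter $n'$, polynomial in $n$, and exploit the fact that $M(K_{n'})^*$ and $B(K_{n'})^*$ are extremely rank-heavy: $r(M(K_{n'})^*) = \binom{n'-1}{2}$ whereas $r(M(K_{n'})) = n'-1$. A cographic witness of size $n'$ thus consumes $\binom{n'-1}{2}$ of the rank of $M$, while the corank hypothesis requires $|E(M)| \ge 2r(M)$, forcing $M$ to carry substantial additional corank disjoint from the witness. Contracting elements outside the witness while preserving vertical $(k - O(1))$-connectivity of minors of $M$, one can hope to locate a graphic or bicircular complete-graph minor transverse to the dual witness; combining the two via elementary projections, in the spirit of Theorems~\ref{main1} and~\ref{main2}, should then produce a genuine $M(K_n)$-, $B(K_n)$-, or $U_{n,2n}$-minor of $M$.

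The hard part will be the last step: vertical connectivity is not self-dual, and there is no established mechanism for converting an $M(K_n)^*$-minor into an $M(K_n)$-minor inside a larger highly connected matroid. A clean route would require new structural lemmas, perhaps analogous to the excess and density arguments available for $\GF(q)$-representable matroids in~[\ref{highlyconnected}], that turn cographic witnesses into graphic ones whenever enough extra corank is available. I expect this to be the principal obstacle, and that settling the conjecture in full demands genuinely new ideas beyond what is assembled in the present paper; a first realistic milestone would be to prove the weaker statement in which $U_{n,2n}$ in the conclusion is replaced by some uniform matroid whose rank grows with $n$.
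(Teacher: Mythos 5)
The statement you are trying to prove is stated in the paper as an open \emph{conjecture}; the paper offers no proof of it, and your proposal does not supply one either. Your plan has two independent gaps. First, it is conditional on Conjecture~\ref{highconn}, which is itself open, so even a complete execution of your reduction would only establish an implication between two conjectures, not the statement itself. Second, and more seriously, the reduction is not executed: the entire content of the statement beyond Conjecture~\ref{highconn} is precisely the elimination of the dual outcomes $M(K_n)^*$ and $B(K_n)^*$ using the hypothesis $r(M^*) \ge r(M)$, and your proposal explicitly defers this step, acknowledging that ``there is no established mechanism for converting an $M(K_n)^*$-minor into an $M(K_n)$-minor inside a larger highly connected matroid.'' The heuristic you offer --- that a cographic witness consumes rank $\binom{n'-1}{2}$ and so leaves corank to spare --- does not by itself produce a primal witness; nothing in the paper (Theorems~\ref{main1} and~\ref{main2} concern matroids with \emph{spanning} clique or geometry restrictions, a hypothesis you have no way to arrange here) converts the leftover corank into an $M(K_n)$-, $B(K_n)$-, or $U_{n,2n}$-minor.

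To be clear, your self-assessment is accurate: this is a research problem, not an exercise, and what you have written is a reasonable first sketch of how one might attack it, not a proof. If you want to make partial progress, the weakened target you mention at the end (replacing $U_{n,2n}$ by a uniform matroid of slowly growing rank) or the representable special cases treated in~[\ref{highlyconnected}] would be sensible starting points, but as submitted the argument establishes nothing unconditionally.
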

	
	The following conjecture, which is essentially posed in~[\ref{highlyconnected}], states that any highly vertically connected matroid omitting a given uniform minor is close to having one of three specific structures that preclude such a minor. Here a \emph{frame matroid} is one of the form $M \del B$, where $M$ is a matroid framed by $B$; the matroid $U_{4,8}$ is not frame.
	\begin{conjecture}
		For all $s \ge 4$ there is an integer $k$ so that, if $M$ is a vertically $k$-connected matroid with no $U_{s,2s}$-minor, then there is a distance-$k$ perturbation $N$ of $M$ such that either
				\begin{itemize}
			\item $N$ or $N^*$ is a frame matroid, or
			\item $N$ is $\bF$-representable for some field $\bF$ over which $U_{s,2s}$ is not representable. 
		\end{itemize}
	\end{conjecture}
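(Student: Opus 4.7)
The plan is to derive the conjecture from Conjecture~\ref{highconn} together with the two main structure theorems of the paper (Theorems~\ref{main1} and~\ref{main2}). First I would apply Conjecture~\ref{highconn}: for $n$ chosen large enough in terms of $s$, a vertically $k$-connected matroid $M$ with $|M|\ge 2k$ must have $M$ or $M^*$ containing an $M(K_n)$-, $B(K_n)$-, or $U_{n,2n}$-minor. Since $U_{s,2s}$ is self-dual, the hypothesis on $M$ also rules out a $U_{s,2s}$-minor in $M^*$, so choosing $n\ge s$ excludes the $U_{n,2n}$ outcome. Replacing $M$ by $M^*$ if necessary, we may assume $M$ has a rank-$n$ graphic or bicircular complete-graph minor; either of these can be viewed as (or naturally extended to) a $B$-clique minor, since $M(K_n)$ is itself a $B$-clique on any star-basis, and $B(K_n)$ sits inside the $V(K_n)$-clique $B^+(K_n)$.

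The next step, which I expect to be the main technical obstacle, is to promote the $B$-clique minor to a spanning $B$-clique restriction. One would like to exploit the vertical connectivity of $M$ to ``undo'' the contractions used in forming the minor, absorbing the required elementary lifts into the final perturbation distance, so that a bounded perturbation of $M$ has a spanning $B$-clique restriction. Doing so while controlling the rank of the clique and the perturbation distance in terms of $s$ alone is delicate; it likely demands a non-representable extension of the minor-to-spanning-restriction machinery developed in~[\ref{highlyconnected}], and is the principal gap between the conjecture and the theorems already established in this paper.

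With a spanning $B$-clique restriction in hand, I would apply Theorem~\ref{main1}: either $M$ has a distance-$k$ perturbation framed by $B$, which upon deleting $B$ is a frame matroid (realising the first outcome of the conjecture), or $M$ has a rank-$n$ projective geometry minor. In the latter case, I would again invoke the spanning-restriction boost to obtain, after a further bounded perturbation, a spanning $\PG(r-1,q)$-restriction of $M$; here $q$ is bounded in terms of $s$, because $U_{s,2s}$ is a minor of $\PG(s-1,q)$ for all sufficiently large $q$. Theorem~\ref{main2} then produces either a $\PG(n-1,q')$-minor with $q'>q$, from which we iterate (the $q$'s strictly increase and are uniformly bounded, so the process terminates), or $M$ is a bounded perturbation of a $\GF(q)$-representable matroid. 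Since the terminal $q$ is bounded in terms of $s$, the matroid $U_{s,2s}$ is not $\GF(q)$-representable, yielding the second outcome of the conjecture.

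The remaining difficulties, once the spanning-restriction boost is in place, are bookkeeping: reconciling the connectivity thresholds coming from Conjecture~\ref{highconn}, the boost, and the possibly repeated applications of Theorems~\ref{main1} and~\ref{main2} in the $\PG$-tower, and summing the perturbation distances so that a single $k=k(s)$ suffices. Care is also needed when switching between $M$ and $M^*$ in the first step, since the subsequent boosts and perturbations must respect the side on which the $B$-clique or spanning $\PG$ sits.
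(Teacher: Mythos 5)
The statement you are asked about is a \emph{conjecture}: the paper does not prove it, and explicitly presents Theorems~\ref{main1} and~\ref{main2} only as partial progress motivated by it. So there is no proof in the paper to compare against, and your proposal cannot be judged ``correct'' --- it is a conditional roadmap, not an argument. Two of its steps are themselves open. First, your opening move invokes Conjecture~\ref{highconn}, which is also unproven; a proof of one open conjecture from another is not a proof. Second, the ``spanning-restriction boost'' --- converting an $M(K_n)$- or $B(K_n)$-minor of a highly vertically connected matroid into a spanning $B$-clique restriction of a bounded perturbation, with all parameters controlled by $s$ alone --- has no analogue outside the $\GF(q)$-representable setting. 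In the representable case this is the deep content of the Geelen--Gerards--Whittle structure theory cited as~[\ref{highlyconnected}]; for general matroids omitting $U_{s,2s}$ nothing of the sort is known, and you correctly flag this yourself as ``the principal gap.'' The same gap recurs in your projective-geometry tower, where you need to upgrade a $\PG(n-1,q)$-minor to a spanning $\PG(r-1,q)$-restriction before Theorem~\ref{main2} applies.

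That said, as a reduction your outline is sensible and essentially matches how the authors intend their theorems to be used: granting Conjecture~\ref{highconn} and a minor-to-spanning-restriction theorem, Theorems~\ref{main1} and~\ref{main2} would close the argument, the bound $q < 2s$ on the terminal field (since $\PG(s-1,q)$ contains $U_{s,2s}$ for $q \ge 2s$) guarantees the second outcome is genuine, and the iteration over increasing $q$ terminates. But you should present this as a conditional reduction of one conjecture to others, not as a proof; the two missing ingredients are not bookkeeping, they are the substance of the problem.
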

	
	We assume familiarity with matroid theory, using the notation of Oxley [\ref{oxley}].

\section{Covering Number} Let $a \ge 1$ be an integer. We write $\tau_a(M)$ for the \emph{$a$-covering number} of a matroid $M$, defined to be the minimum number of sets of rank at most $a$ in $M$ required to cover $E(M)$. For $a \ge 2$, the parameter $\tau_{a-1}$ is a useful measure of density when excluding a rank-$a$ uniform minor; the following lemma, a strengthening of one proved in [\ref{gkep}], finds such a minor when $\tau_{a-1}$ is large enough compared to the rank.  

\begin{lemma}\label{udensity}
	Let $a,b$ be integers with $1 \le a < b$. If $B$ is a basis of a matroid $M$ satisfying $r(M) > a$ and $\tau_{a}(M) \ge \binom{b}{a}^{r(M)-a}$,  then $M$ has a $U_{a+1,b}$-minor $U$ in which $E(U) \cap B$ is a basis.
\end{lemma}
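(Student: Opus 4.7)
The plan is to induct on $r(M) - a \ge 1$, with the base case $r(M) = a+1$ handled by a greedy construction and the inductive step reducing rank by contracting an element of $B$.

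For the base case, since $\tau_a(M) \ge \binom{b}{a}$ and $B$ is an independent set of size $a+1$, we extend $B$ to a $U_{a+1,b}$-restriction by iteratively adjoining $b-a-1$ new elements $f_1, \dots, f_{b-a-1}$. At step $i$, we pick $f_i \in E(M)$ outside all closures $\cl_M(S)$ of $a$-subsets $S$ of the current $(a+i)$-element set $B \cup \{f_1, \dots, f_{i-1}\}$. There are at most $\binom{a+i}{a} \le \binom{b-1}{a} < \binom{b}{a} \le \tau_a(M)$ such forbidden rank-$\le a$ flats, so a valid $f_i$ exists; moreover, since every $e \in B$ lies in $\cl_M(S)$ whenever $S$ is an $a$-subset of the current set containing $e$, the forbidden flats collectively contain $B$, so $f_i$ automatically falls in $E(M) \setminus B$. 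After $b-a-1$ steps, the set $B \cup \{f_1, \dots, f_{b-a-1}\}$ is in general position and yields the desired $U_{a+1,b}$-restriction with $B$ as a basis.

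For the inductive step ($r(M) > a+1$), the easy subcase is when there exists $e \in B$ with $\tau_a(M/e) \ge \binom{b}{a}^{r-a-1}$: the inductive hypothesis applied to $M/e$ (with basis $B \setminus \{e\}$) produces a $U_{a+1,b}$-minor $U$ of $M/e$ with $E(U) \cap (B \setminus \{e\})$ a basis of $U$, and since $e$ is contracted we have $E(U) \cap B = E(U) \cap (B \setminus \{e\})$, so $U$ is also the desired minor of $M$.

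The other subcase---$\tau_a(M/e) < \binom{b}{a}^{r-a-1}$ for every $e \in B$---is the main obstacle. Lifting a minimum rank-$\le a$ cover of some $M/e$ by adjoining $e$ to each set shows $\tau_{a+1}(M) < \binom{b}{a}^{r-a-1}$, and averaging against the hypothesis $\tau_a(M) \ge \binom{b}{a}^{r-a}$ shows that some member $G$ of any such $(a+1)$-cover must satisfy $r_M(G) = a+1$ and $\tau_a(M|G) > \binom{b}{a}$. The strategy is then to arrange that $G$ is the closure of an $(a+1)$-subset of $B$---so that $G \cap B$ is automatically a basis of $M|G$---by choosing the covers of each $M/e$ to consist initially of closures of $a$-subsets of $B \setminus \{e\}$, supplemented to handle elements whose fundamental $B$-circuit has size exceeding $a+2$ (these cannot lie in the closure of any $(a+1)$-subset of $B$). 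With such a cover, applying the base case to $M|G$ with basis $G \cap B$ produces the $U_{a+1,b}$-minor with the required basis property. Bounding the contribution of the long-support elements, so that the averaging argument still singles out a $G$ of the desired ``nice'' form, is where the bulk of the technical work lies.
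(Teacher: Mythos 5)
Your base case (greedy general-position extension using $\tau_a(M)\ge\binom{b}{a}>\binom{a+i}{a}$) and your first subcase of the induction (if $\tau_a(M\con e)\ge\binom{b}{a}^{r-a-1}$ for some $e\in B$, recurse on $M\con e$ with basis $B-\{e\}$) are correct and essentially match the paper. The gap is in your second subcase, and it is genuine: you propose to force the dense rank-$(a+1)$ set $G$ produced by the averaging argument to be the closure of an $(a+1)$-subset of $B$, by building the cover of $M\con e$ out of closures of $a$-subsets of $B-\{e\}$ plus ``supplemental'' sets for elements with long fundamental circuits, and you explicitly defer the bounding of those supplemental sets. That deferred step cannot be carried out in general. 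For example, if $M$ consists of the basis $B$ together with a very large number of freely placed points, then every non-basis element has fundamental circuit of size $r+1$, so \emph{no} element is covered by a closure of a small subset of $B$, all of the density lives in the supplemental part of the cover, and the pigeonhole necessarily selects a $G$ that meets $B$ in far fewer than $a+1$ elements (possibly none). The lemma is still true for such $M$, but your strategy produces no candidate minor with $E(U)\cap B$ a basis.

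The missing idea is that the dense set and the basis do not need to coincide. The paper takes a single $e\in B$, covers $M\con e$ by at most $\binom{b}{a}^{r-a-1}$ \emph{arbitrary} rank-$a$ sets $F$, finds by averaging one $F$ with $\tau_a(M|(F\cup\{e\}))\ge\binom{b}{a}$ and $r_M(F\cup\{e\})=a+1$, and then contracts a \emph{maximal subset $B'$ of $B$ skew to $F\cup\{e\}$}. The resulting rank-$(a+1)$ minor $N$ still contains $M|(F\cup\{e\})$ as a restriction (so $\tau_a(N)\ge\binom{b}{a}$), while the $a+1$ surviving elements $B-B'$ automatically form a basis of $N$ contained in $B$; the base case applied to $N$ with basis $B-B'$ then finishes. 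Replacing your ``nice cover'' machinery with this contraction step closes the gap; the rest of your argument can stand as written.
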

\begin{proof}
	If $r(M) = a+1$, then note that $M|B \cong U_{a+1,a+1}$; let $X \subseteq E(M)$ be maximal so that $B \subseteq X$ and $M|X$ is a rank-$(a+1)$ uniform matroid. We may assume that $|X| < b$. By maximality, every $x \in E(M) - X$ is spanned by some $a$-element subset of $X$; this also holds for every $x \in X$, so $\tau_{a}(M) \le \binom{|X|}{a} < \binom{b}{a}$, a contradiction. 
	
	Let $r(M) = r > a+1$ and suppose that the lemma holds for matroids of smaller rank. Let $e \in B$. We may assume that $M \con e$ has no $U_{a+1,b}$-minor $U$ in which $E(U) \cap (B-\{e\})$ is a basis, so $\tau_{a}(M \con e) \le \binom{b}{a}^{r-a-1}$. Let $\cF$ be a cover of $M \con e$ with $\binom{b}{a}^{r-a-1}$ rank-$a$ sets. Since $\binom{b}{a}^{r-a} \le \tau_{a}(M) \le \sum_{F \in \cF}\tau_{a}(M|(F \cup e))$, there is some $F \in \cF$ such that $\tau_{a}(M|(F \cup \{e\})) \ge \binom{b}{a}$. Note that $r_M(F \cup \{e\}) = a+1$. By contracting a maximal subset of $B$ that is skew to $F \cup \{e\}$ in $M$, we obtain a rank-$(a+1)$ minor $N$ of $M$ so that $E(N) \cap B$ is a basis of $N$, and $M \con (F \cup \{e\})$ is a restriction of $N$. Now $\tau_{a}(N) \ge \tau_a(M|(F \cup \{e\})) \ge \binom{b}{a}$, and the lemma follows by the inductive hypothesis.
\end{proof}

The next two results, which find a projective geometry minor or large uniform minor whenever the covering number is large, are special cases of the main theorems of [\ref{covering1}] and [\ref{covering2}] respectively. 

\begin{theorem}\label{pgdensity}
	There is a function $f_{\ref{pgdensity}}\colon \bZ^2 \to \bZ$ so that, for all integers $s,n \ge 2$, if $M$ is a matroid with $r(M) > 1$ and $\tau_{s-1}(M) \ge r(M)^{f_{\ref{pgdensity}}(s,n)}$, then $M$ has a $U_{s,2s}$-minor or a rank-$n$ projective geometry minor. 
\end{theorem}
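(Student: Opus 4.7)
The plan is to induct on $n$, using Lemma~\ref{udensity} as the workhorse that converts exponential covering number into a $U_{s,2s}$-minor, and a separate density-increment argument to produce projective geometry minors when the covering number grows only polynomially in rank.

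For the base case $n = 2$, I use the elementary observation that any simple matroid with no $U_{2,3}$-minor must be the free matroid $U_{r,r}$, so $M$ has no $U_{2,3}$-minor if and only if $\tau_1(M) = r(M)$. Since $\tau_{s-1}(M) \le \tau_1(M)$ for all $s \ge 2$, the choice $f_{\ref{pgdensity}}(s, 2) = 2$ suffices: the hypothesis $\tau_{s-1}(M) \ge r(M)^2 > r(M)$ forces $\tau_1(M) > r(M)$ and hence a $U_{2,3}$-minor, which is a rank-$2$ projective geometry.

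For the inductive step, assume the result for $n - 1$ with value $g := f_{\ref{pgdensity}}(s, n-1)$, and let $M$ have rank $r$, no $U_{s,2s}$-minor, and $\tau_{s-1}(M) \ge r^{g+c}$ for a constant $c$ to be chosen. The goal is to find a contraction $N = M \con F$ of strictly smaller rank such that $\tau_{s-1}(N) \ge r(N)^g$ still holds, so that the inductive hypothesis, applied to $N$ (which also has no $U_{s,2s}$-minor), yields a $\PG(n-2, q)$-minor of $N$ for some prime power $q$. The subtle point is that contraction can decrease $\tau_{s-1}$ arbitrarily in general, since the covers of $M$ descend but may degrade; the device I have in mind, of the sort used in [\ref{covering1}], is to choose $F$ from among many candidate rank-$(s-1)$ sets in a minimum cover by a pigeonhole argument that retains a polynomial fraction of the covering number.

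The hard part is the lift: upgrading a $\PG(n-2, q)$-minor of $N$ to a rank-$n$ projective geometry minor of $M$ itself. Contracting $F$ can collapse distinct points of $M$ into parallel classes of $N$, washing away the hyperplane structure of a would-be $\PG(n-1, q')$. My approach would be to select $F$ not in isolation but as one of very many candidate rank-$(s-1)$ sets, and then apply a Ramsey-type argument to the ``extension patterns'' that each candidate induces on the $\PG(n-2, q)$-minor. Because $M$ excludes $U_{s,2s}$, these extension patterns fall into only boundedly many isomorphism types, and a dominant type provides a consistent prime power $q' \ge q$ together with enough coherent extenders to assemble a rank-$n$ projective geometry minor of $M$. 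Controlling this Ramsey step is the main obstacle, and it drives the recursive definition and growth of $f_{\ref{pgdensity}}(s, n)$.
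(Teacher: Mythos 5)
This theorem is not proved in the paper at all: it is imported as a special case of the main theorem of~[\ref{covering1}] (``Projective geometries in exponentially dense matroids.\ I''), which is itself a long and technical paper. So the only fair comparison is between your sketch and that external proof, and your sketch does not close the gap that the citation is covering. Your base case $n=2$ is fine ($\tau_{s-1}(M)\le\tau_1(M)=\epsilon(M)$, and $\epsilon(M)>r(M)$ forces a $U_{2,3}=\PG(1,2)$-minor). The problems are in the inductive step, where both of the substantive claims are asserted rather than proved.

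First, the density-preservation step is not a routine pigeonhole: contracting a rank-$(s-1)$ set $F$ can collapse enormously many of the sets in a minimum cover into a single rank-$(s-1)$ set of $M\con F$, and there is no obvious reason that some candidate $F$ retains $\tau_{s-1}(M\con F)\ge r(M\con F)^g$; note that Lemma~\ref{udensity} only bites when $\tau_{s-1}$ is exponential in the rank, so it gives you nothing in the polynomial regime where this theorem lives. Second, and decisively, the ``lift'' from a $\PG(n-2,q)$-minor of $N$ to a rank-$n$ projective geometry minor of $M$ is precisely the content of the cited theorem, and your proposed route rests on the unsupported claim that the extension patterns of the many candidate sets $F$ over the $\PG(n-2,q)$-minor fall into boundedly many isomorphism types because $M$ has no $U_{s,2s}$-minor; no such bound is evident, and even granting a dominant pattern, assembling coherent extenders into a projective geometry over a single prime power $q'$ is the hard construction, not a Ramsey afterthought. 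The actual argument in~[\ref{covering1}] and~[\ref{covering2}] does not run this induction: it first extracts long lines and then full projective planes over a common $\GF(q)$ using growth-rate machinery, controls non-representable ``stack'' obstructions, and uses connectivity reduction to grow the geometry rank by rank. As written, your proposal is a plan whose central step is explicitly deferred, so it does not constitute a proof.
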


\begin{theorem}\label{exppgdensity}
	There is a function $f_{\ref{exppgdensity}}\colon \bZ^3 \to \bZ$ so that, for all integers $s,n \ge 2$, and every prime power $q$, if $M$ is a matroid with $\tau_{s-1}(M) \ge q^{r(M) + f_{\ref{exppgdensity}}(s,q,n)}$, then $M$ has a $U_{s,2s}$-minor or a $\PG(n-1,q')$-minor for some $q' > q$. 
\end{theorem}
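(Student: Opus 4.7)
The plan is to prove the contrapositive by induction on $r(M)$: assuming $M$ has neither a $U_{s,2s}$-minor nor a $\PG(n-1,q')$-minor for any $q' > q$, show that $\tau_{s-1}(M) < q^{r(M) + c}$ for a suitable constant $c = f_{\ref{exppgdensity}}(s,q,n)$. For the base case, when $r(M)$ is bounded in terms of $s,q,n$, Lemma~\ref{udensity} forces $|\si(M)|$ (and hence $\tau_{s-1}(M)$) to be bounded by a function of these parameters alone, so the inequality holds once $c$ is chosen large enough.

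For the inductive step with $r(M)$ large, I would first apply Theorem~\ref{pgdensity}: by choosing $c$ large relative to $f_{\ref{pgdensity}}(s,n+1)$, the hypothesis $\tau_{s-1}(M) \ge q^{r(M)+c}$ also exceeds $r(M)^{f_{\ref{pgdensity}}(s,n+1)}$, so $M$ has a $\PG(n,q_0)$-minor for some prime power $q_0 \le q$, since the case $q_0 > q$ is excluded by assumption. The core of the induction is then to produce an element $e \in E(M)$ whose contraction is \emph{efficient}, meaning $\tau_{s-1}(M/e) \ge q^{-1}\,\tau_{s-1}(M)$ (up to a mild polynomial factor that can be absorbed into $c$); assuming such $e$ exists, induction applied to $M/e$ yields $\tau_{s-1}(M) \le q \cdot \tau_{s-1}(M/e) < q^{r(M)+c}$, closing the induction.

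Existence of such an $e$ would be established by contradiction. If every single-element contraction dropped covering number by strictly more than a factor of $q$, then on every contraction $M$ would be denser than any $\GF(q)$-representable matroid, forcing local configurations of many long lines and concentrated high-rank flats. Combining this uniform excess density with the already-produced $\PG(n,q_0)$-minor, one would iteratively upgrade to a $\PG(n-1,q')$-minor with $q' > q$ (contradicting the hypothesis), or locate enough independent long lines inside a low-rank flat to apply Lemma~\ref{udensity} and extract a $U_{s,2s}$-minor.

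The main obstacle is this efficient-contraction dichotomy. Producing a single element whose contraction wastes at most a factor $q$ requires exploiting the precise exponential bound $q^{r(M)+c}$: a polynomial bound on $\tau_{s-1}$ would not suffice, because one truly needs to separate ``genuinely $\GF(q)$-like'' behavior from anything denser. Turning the failure of efficient contraction into either a forbidden uniform minor or a projective geometry of strictly larger order is exactly where the specialized density machinery of [\ref{covering2}] must be invoked, and I would expect to re-use its framework rather than reconstruct it from scratch.
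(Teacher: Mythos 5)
The paper does not prove Theorem~\ref{exppgdensity} at all: it is stated as a special case of the main theorem of [\ref{covering2}], an entire separate paper, and is used here as a black box. So the relevant question is whether your outline could stand as a self-contained proof, and it cannot: the decisive step is missing. Your base case and the reduction via Theorem~\ref{pgdensity} to a $\PG(n,q_0)$-minor with $q_0 \le q$ are fine, and the induction-on-rank shape is the standard one. But everything hinges on the ``efficient contraction dichotomy'' --- either some $e$ has $\tau_{s-1}(M \con e) \ge q^{-1}\tau_{s-1}(M)$ (up to absorbable factors), or one extracts a forbidden minor --- and you give no argument for it beyond asserting that failure ``forces local configurations of many long lines'' and that the machinery of [\ref{covering2}] ``must be invoked.'' That dichotomy \emph{is} the theorem. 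The elementary Kung-style contraction argument (cover $M \con e$ by rank-$(s-1)$ sets $F$, bound $\tau_{s-1}(M|(F\cup\{e\}))$ via Lemma~\ref{udensity}) only yields $\tau_{s-1}(M) \le \binom{2s}{s-1}\,\tau_{s-1}(M \con e)$, i.e.\ an exponential bound with base $\binom{2s}{s-1}$ rather than $q$; since lines in a matroid with no $U_{s,2s}$-minor may carry far more than $q+1$ points, excluding $U_{s,2s}$ alone cannot bring the base down to $q$, and exploiting the exclusion of $\PG(n-1,q')$ for $q'>q$ to do so is exactly the hard content you have deferred.

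It is also worth noting that the actual proof in [\ref{covering2}] does not proceed by locating a single element whose contraction is efficient. It instead builds, inside a minor of $M$, a spanning $\PG(m-1,q)$-restriction together with a large $(q,h,t)$-stack of non-$\GF(q)$-representable pieces sitting over it, and then a result of the type quoted here as Lemma~\ref{stackwin} converts that configuration into a $U_{s,2s}$-minor or a $\PG(n-1,q')$-minor with $q'>q$. So beyond being incomplete, your outline also diverges from the route the cited proof actually takes; as written it should be regarded as a restatement of the theorem's difficulty rather than a proof of it.
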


\section{Disjoint Bases}\label{selfdualsection}

For $t \ge 0$, let $tU_{1,2}$ denote the direct sum of $t$ copies of $U_{1,2}$. Both $tU_{1,2}$ and $U_{s,2s}$   are the union of two disjoint bases. In this section we show that any  large matroid with two disjoint bases has one of these two as a minor. 

\begin{lemma}\label{goodbasis}
	Let $M$ be a rank-$r$ matroid with ground set $\{x_1, \dotsc, x_n\}$. There exists $t \in \{0,\dotsc,n\}$ so that the set $X = \{x_1, \dotsc, x_t\}$ satisfies $|\cl_M(X) \cap (E-X)| \ge  \lfloor\tfrac{n}{r+1}\rfloor$. 
\end{lemma}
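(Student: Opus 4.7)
The plan is to use a pigeonhole argument on the rank sequence along the natural chain induced by the ordering of the ground set.

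First I would set $X_t = \{x_1, \dotsc, x_t\}$ for $t \in \{0,\dotsc,n\}$ and consider the sequence of ranks $r_t = r_M(X_t)$. Since each $X_{t+1} = X_t \cup \{x_{t+1}\}$, the sequence is nondecreasing with $r_0 = 0$, $r_n = r$, and all values lie in $\{0,1,\dotsc,r\}$. Because the sequence has length $n+1$ and only $r+1$ possible values, some value $k$ is attained by at least $\lceil (n+1)/(r+1) \rceil$ indices, and by monotonicity these indices form a contiguous block.

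Let $t$ be the smallest and $s$ the largest index with $r_t = r_s = k$; then $s - t + 1 \ge \lceil (n+1)/(r+1) \rceil$, and $r_M(X_s) = r_M(X_t)$ forces $X_s \subseteq \cl_M(X_t)$. In particular, each of $x_{t+1},\dotsc,x_s$ lies in $\cl_M(X_t) \cap (E-X_t)$, so
\[
	|\cl_M(X_t) \cap (E-X_t)| \ge s - t \ge \left\lceil \tfrac{n+1}{r+1} \right\rceil - 1.
\]
A short case check (writing $n = q(r+1)+s'$ with $0 \le s' \le r$) shows $\lceil (n+1)/(r+1) \rceil - 1 \ge \lfloor n/(r+1) \rfloor$, completing the proof.

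There is essentially no obstacle here; the lemma is a clean pigeonhole observation and the only care needed is the cosmetic verification that the floor and ceiling align correctly.
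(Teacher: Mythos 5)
Your proof is correct and is essentially identical to the paper's: both apply pigeonhole to the nondecreasing rank sequence $t \mapsto r_M(\{x_1,\dotsc,x_t\})$, which takes at most $r+1$ values over $n+1$ indices, and observe that a constant block of indices certifies the required number of elements in $\cl_M(X)\cap(E-X)$. Your floor/ceiling verification (indeed $\lceil (n+1)/(r+1)\rceil - 1 = \lfloor n/(r+1)\rfloor$) is if anything slightly more careful than the paper's own bookkeeping.
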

\begin{proof}
For each $t\in \{0,\ldots,n\}$ let $a_t=r_M(\{x_1,\ldots,x_t\})$. Thus
$0=a_0\le a_1\le\cdots\le a_n = r$. So there exist $t,t'\in\{0,\ldots,n\}$ such that $a_t=a_t'$ and $t'-t\ge \lceil\tfrac{n+1}{r+1}\rceil>\lfloor\tfrac{n}{r+1}\rfloor$. Now let $X=\{x_1,\ldots,x_t\}$ and observe that $x_{t+1},\ldots,x_{t'}\in \cl_M(X)$.
\end{proof}

Let $\bar{A} = (a_1, \dotsc, a_n)$ and $\bar{B} = (b_1, \dotsc, b_n)$ be $n$-tuples of distinct elements of a matroid $M$. We say that the pair $(\bar{A},\bar{B})$ is \emph{upper-triangular} in $M$ if $(\{a_1, \dotsc, a_n\},\{b_1, \dotsc, b_n\})$ is a partition of $E(M)$ into two bases, and $\cl_M(\{a_1, \dotsc, a_k\}) = \cl_M(\{b_1, \dotsc, b_k\})$ for every $k \in \{1, \dotsc, n\}$. The pair is \emph{lower-triangular} if it is an upper-triangular pair of $M^*$; it is easy to check that this is equivalent to the condition that $\cl_M(\{a_k,\dotsc,a_n\}) = \cl_M(\{b_k, \dotsc, b_n\})$ for each $k$. This terminology is motivated by matrix representations; $(\bar{A},\bar{B})$ is an upper-triangular pair in a representable matroid $M$ if $M = M(P)$, where $P$ is a matrix with column set $\bar{A} \cup \bar{B}$ for which $P[\bar{A}]$ is an identity matrix and $P[\bar{B}]$ is an upper-triangular matrix.

Given an upper-triangular pair $((a_1, \dotsc, a_n),(b_1,\dotsc,b_n))$ in $M$, there is a natural involution $\varphi\colon E(M) \to E(M)$ defined by $\varphi(a_i) = b_i$ and $\varphi(b_i) = a_i$. The next lemma shows that contracting/deleting pairs of sets related by $\varphi$ does not destroy upper-triangularity. In what follows we  mix set notation and tuple notation where there is no ambiguity.

\begin{lemma}\label{staytriangular}
	Let $(\bar{A},\bar{B})$ be an upper-triangular pair in a matroid $M$ and let $\varphi\colon E(M) \to E(M)$ be the associated involution. Let $X \subseteq \bar{A}$ and $Y = \varphi(X)$. Then $(\bar{A}-X,\bar{B}-Y)$ is upper-triangular in both $M \con X \del Y$ and $M \con Y \del X$. 
\end{lemma}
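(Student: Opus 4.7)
The plan is to verify the two defining conditions of upper-triangularity for $(\bar A - X, \bar B - Y)$ in $M \con X \del Y$; the statement for $M \con Y \del X$ then follows by symmetry, since the definition of an upper-triangular pair is unchanged if the two tuples are swapped, so one can apply the first conclusion to $(\bar B, \bar A)$ with $Y$ taking the role of $X$.

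For the partition-into-bases condition, $\bar A - X$ is immediately a basis of $M \con X \del Y$, since $\bar A$ is a basis of $M$ containing $X$ and disjoint from $Y$. The nontrivial half is showing that $(\bar B - Y) \cup X$ is a basis of $M$, from which it follows that $\bar B - Y$ is a basis of $M \con X \del Y$. I would prove by induction on $k \in \{0, \dotsc, n\}$ that the set
\[
S_k := (X \cap \{a_1, \dotsc, a_k\}) \cup (\{b_1, \dotsc, b_k\} \setminus Y)
\]
is a basis of the flat $\cl_M(\{a_1, \dotsc, a_k\}) = \cl_M(\{b_1, \dotsc, b_k\})$. The element added when passing from $S_{k-1}$ to $S_k$ is $a_k$ if $a_k \in X$ and $b_k$ otherwise, and in either case upper-triangularity places it in $\cl_M(\{a_1,\dotsc,a_k\}) \setminus \cl_M(\{a_1,\dotsc,a_{k-1}\})$, which preserves both independence and spanning. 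Taking $k = n$ yields the required basis.

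For the closure condition I would use the identity $\cl_{M \con X \del Y}(S) = \cl_M(S \cup X) \setminus (X \cup Y)$ for $S \subseteq E(M) \setminus (X \cup Y)$. Indexing $\bar A - X = (a_{j_1}, \dotsc, a_{j_p})$ and $\bar B - Y = (b_{j_1}, \dotsc, b_{j_p})$ in order and setting $j = j_\ell$, the task reduces to showing
\[
\cl_M(\{a_{j_1}, \dotsc, a_{j_\ell}\} \cup X) = \cl_M(\{b_{j_1}, \dotsc, b_{j_\ell}\} \cup X).
\]
Using the $k = j$ case of the inductive claim to substitute $S_j$ for $\{a_1,\dotsc,a_j\}$ inside the closure, each side rewrites as $\cl_M(X \cup (\{b_1,\dotsc,b_j\} \setminus Y))$, giving equality. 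The main obstacle is the inductive claim about $S_k$; once it is established, all remaining verifications are routine bookkeeping about which indices lie in $X$ versus $\bar A - X$.
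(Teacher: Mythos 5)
Your proposal is correct, and it takes a somewhat different route from the paper. The paper reduces to the case $|X|=1$ (removing a single pair $a_\ell,b_\ell$) by induction and symmetry, and then verifies the closure condition in $N = M \con a_\ell \del b_\ell$ by a direct computation split into the cases $k < \ell$ and $k > \ell$, hinging on the identity $\cl_M(\{a_1,\dotsc,a_\ell\}) = \cl_M(\{b_1,\dotsc,b_\ell\})$; the partition-into-bases condition is left implicit. You instead handle a general $X$ in one pass via the explicit ``mixed basis'' claim that $S_k = (X \cap \{a_1,\dotsc,a_k\}) \cup (\{b_1,\dotsc,b_k\}\setminus Y)$ is a basis of the common flat $\cl_M(\{a_1,\dotsc,a_k\})$, and then deduce both conditions from the closure formula $\cl_{M\con X \del Y}(S) = \cl_M(S\cup X)\setminus(X\cup Y)$. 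Both arguments are sound and of comparable length; yours has the merit of making the basis/partition condition explicit (your $k=n$ case shows $X \cup (\bar B \setminus Y)$ is a basis of $M$, which the paper omits) and of avoiding the outer induction on $|X|$, while the paper's single-swap reduction keeps each step's bookkeeping minimal. Your symmetry argument for $M \con Y \del X$ is also valid, since the definition of an upper-triangular pair is symmetric under swapping the two tuples.
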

\begin{proof}
	Let $\bar{A} = (a_1, \dotsc, a_n)$ and $\bar{B} = (b_1, \dotsc, b_n)$. Inductively and by symmetry, it suffices to show for each $\ell$ that $(\bar{A}-\{a_\ell\},\bar{B}-\{b_{\ell}\})$ is upper-triangular in $N = M \con a_\ell \del b_\ell$. Indeed, for each $k < \ell$ we have $\cl_N(\{a_1,\dotsc, a_k\}) \supseteq \cl_M(\{a_1, \dotsc, a_k\}) \supseteq \{b_1, \dotsc, b_k\}$ and vice versa, so 
$\cl_N(\{a_1, \dotsc, a_k\}) = \cl_N(\{b_1, \dotsc, b_k\})$.
 For $k > \ell$,
 \begin{align*}
 	\cl_N(\{a_1,\dotsc, a_k\} - \{a_\ell\}) &= \cl_M(\{a_1, \dotsc, a_k\}) - \{a_{\ell},b_{\ell}\} \\& \supseteq \{b_1, \dotsc, b_k\} - \{b_\ell\}, \text{   and}\\
	\cl_N(\{b_1, \dotsc, b_k\} - \{b_\ell\}) &= \cl_{M \con a_\ell}(\{b_1, \dotsc, b_{\ell-1}\} \cup \{b_{\ell+1}, \dotsc, b_k\}) - \{b_\ell\} \\
	&= \cl_M(\{a_1, \dotsc, a_{\ell-1},a_\ell\} \cup \{b_\ell+1, \dotsc, b_k\}) - \{a_\ell,b_\ell\} \\
	&= \cl_M(\{b_1, \dotsc, b_k\}) - \{a_{\ell},b_\ell\}\\
	&\supseteq \{a_1,\dotsc, a_k\} - \{a_\ell\},
\end{align*} where we use the fact that $\cl_M(\{a_1, \dotsc, a_\ell\}) = \cl_M(\{b_1, \dotsc, b_\ell\})$. Thus $\cl_N(\{a_1, \dotsc, a_k\}-\{a_\ell\}) = \cl_N(\{b_1, \dotsc, b_k\} - \{b_\ell\})$. The lemma follows. 
\end{proof}

\begin{lemma}\label{triangularone}
	Let $s \ge 2$ and $t \ge 0$ be integers. If $A$ and $B$ are disjoint bases of a matroid $M$ with $r(M) \ge 4^{st}$, then either 
	\begin{itemize}
		\item $M$ has a $U_{s,2s}$-minor $U$ in which $E(U) \cap A$ and $E(U) \cap B$ are bases, or
		\item $M$ has a rank-$t$ minor with a lower-triangular pair $(\bar{A},\bar{B}) \in A^t \times B^t$.	
\end{itemize}
\end{lemma}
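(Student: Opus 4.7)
We would proceed by induction on $t$; the case $t = 0$ is trivial (take the empty rank-$0$ minor). For the inductive step, after deleting $E(M) \setminus (A \cup B)$ (which preserves both bases), we may assume $E(M) = A \cup B$.

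The strategy is a greedy construction of the lower-triangular pair: iteratively pick pairs $(a_k, b_k) \in A \times B$ for $k = 1, \ldots, t$ satisfying
\begin{itemize}
\item[(i)] $a_k$ lies in the fundamental circuit $C_M(b_k, A)$ (a nonzero ``diagonal'' incidence), and
\item[(ii)] $C_M(b_k, A) \cap \{a_1, \ldots, a_{k-1}\} = \emptyset$ (the lower-triangular condition).
\end{itemize}
If these choices succeed through $k = t$, then setting $\bar A = \{a_1, \ldots, a_t\}$ and $\bar B = \{b_1, \ldots, b_t\}$, the minor $N = M / (A \setminus \bar A) \setminus (B \setminus \bar B)$ has rank $t$, and $(\bar A, \bar B)$ in the constructed orderings is a lower-triangular pair: conditions (i)--(ii) force the $\bar A$-by-$\bar B$ fundamental-circuit matrix in $N$ to be lower triangular with nonzero diagonal, from which the closure equalities $\cl_N(\{a_k, \ldots, a_t\}) = \cl_N(\{b_k, \ldots, b_t\})$ for every $k$ follow by a rank-plus-containment comparison.

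If the greedy fails at some step $k \le t$, then every $b \in B \setminus \{b_1, \ldots, b_{k-1}\}$ has $C_M(b, A) \cap \{a_1, \ldots, a_{k-1}\} \ne \emptyset$, equivalently $B \cap \cl_M(A \setminus \{a_1, \ldots, a_{k-1}\}) \subseteq \{b_1, \ldots, b_{k-1}\}$. Hence in $M' = M / (A \setminus \{a_1, \ldots, a_{k-1}\})$, which has rank $k - 1$, at most $k - 1$ elements of $B$ are loops, leaving at least $r - k + 1$ non-loop $B$-elements together with the $k-1$ contracted-basis elements, for at least $r \ge 4^{st}$ non-loops in a rank-$(k-1)$ matroid. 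Provided $k \ge s + 1$, I would apply Lemma~\ref{udensity} to $M'$ with basis $\{a_1, \ldots, a_{k-1}\}$ and parameters $(a,b) = (s-1, 2s)$; the density allowed by $r \ge 4^{st}$ should exceed the threshold $\binom{2s}{s-1}^{k-s}$, giving a $U_{s,2s}$-minor $U$ of $M$ with $E(U) \cap A = E(U) \cap \{a_1, \ldots, a_{k-1}\}$ a basis. Since $E(M) = A \cup B$ and every $s$-subset of $E(U)$ is a basis of $U_{s,2s}$, $E(U) \cap B$ is also a basis.

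The main obstacle is twofold: first, verifying that the structural failure condition actually yields the required lower bound on $\tau_{s-1}(M')$ (this should follow from the concentration of non-loops in the low-rank minor, but needs care); and second, the edge case $k \le s$, where $r(M') < s$ and Lemma~\ref{udensity} does not apply directly. For the edge case, I expect one either restarts the greedy with a different initial pair --- exploiting the enormous slack in the $4^{st}$ bound to find initial pairs avoiding early failure --- or extracts a tailored argument from the very constrained structure of an early failure (e.g., a single $a_j$ appearing in nearly every fundamental circuit) to directly construct the required $U_{s,2s}$-minor through a different contraction pattern.
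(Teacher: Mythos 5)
Your success branch is fine: if all $t$ pairs can be chosen, the triangular fundamental-circuit pattern gives a unique matching, so $(A\setminus\bar A)\cup\bar B$ is a basis of $M$, the containments $C_M(b_j,A)\setminus\{b_j\}\subseteq A\setminus\{a_1,\dotsc,a_{j-1}\}$ give $\{b_k,\dotsc,b_t\}\subseteq\cl_N(\{a_k,\dotsc,a_t\})$, and equality of these two rank-$(t-k+1)$ flats follows. The gap is in the failure branch, and it is not a matter of care: ``many non-loops in a rank-$(k-1)$ minor'' gives no lower bound whatever on $\tau_{s-1}(M')$. Failure at step $k$ only says that each remaining $b\in B$ satisfies $C_M(b,A)\cap\{a_1,\dotsc,a_{k-1}\}\neq\emptyset$, i.e.\ is a non-loop of $M'=M\con(A\setminus\{a_1,\dotsc,a_{k-1}\})$; it says nothing about how those elements are distributed in $M'$, and they may all lie in a single parallel class, giving $\tau_{s-1}(M')\le k$ no matter how large $r(M)$ is. For instance, represent $M$ by $[I\mid T]$ over $\GF(2)$ with $T$ having columns $e_1$ and $e_1+e_j$ for $j\ge 2$: an execution of your greedy that pairs $b_1$ with $a_1$ dies at step $2$, every remaining $b_j$ is parallel to $a_1$ in $M\con(A\setminus\{a_1\})$, and $M$ is binary so has no $U_{2,4}$-minor. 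Thus greedy failure certifies neither outcome of the lemma; your ``edge case'' $k\le s$ is realized here and is the typical obstruction rather than a corner case, and ``restarting with a different initial pair'' has no termination argument. Note also that the lemma's conclusion allows minors in which elements of $A$ are deleted and elements of $B$ are contracted, whereas your construction only searches minors of the form $M\con(A\setminus\bar A)\del(B\setminus\bar B)$; this restricted search space is part of why a one-shot greedy cannot suffice.

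The paper's proof is structured precisely around the scenario that breaks your argument. It never tries to extract all $t$ pairs from one fundamental-circuit pattern; instead it finds one pair per recursive step. It splits $A$ into an $s$-set $A_0$ and $A'=A\setminus A_0$, uses Lemma~\ref{udensity} to conclude $\tau_{s-1}(M\con A')\le\binom{2s}{s-1}$ (this is where the $U_{s,2s}$ outcome is cashed in), and hence finds a huge set $B''\subseteq B$ of rank at most $s-1$ in $M\con A'$ --- exactly the ``concentration'' that destroys your density claim. It then uses that concentration constructively: after contracting a basis $Y$ of $B''$, a connectivity computation produces a cocircuit inside $A_0\cup B_0$ and hence a series pair $\{a,b\}$ in a suitable minor, which becomes the first coordinate of the lower-triangular pair, and the recursion continues on disjoint bases of rank about $4^{-s}r(M)$ (whence the bound $4^{st}$). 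If you wish to retain a greedy flavour, the repair is to make each ``failed'' step yield such a series pair in an appropriate minor rather than a covering-number bound.
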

\begin{proof}
	We may assume that $E(M) = A \cup B$, that the first outcome does not hold, and inductively that $t \ge 1$ and the lemma holds for smaller $t$. Let $A_0$ be an $s$-element subset of $A$, and let $A' = A- A_0$. Let $B_0$ be a basis for $M \con A' \del A_0$. Since $r(M \con A') = s$, by Lemma~\ref{udensity} we have $\tau_{s-1}(M \con A') \le \binom{2s}{s-1}$, so by a majority argument there is some $B'' \subseteq B-B_0$ for which $r_{M \con A'}(B'') \le s-1$ and $|B''| \ge (n-s)/\binom{2s}{s-1} \ge 4^{-s}n + s$ (this last inequality follows from $n \ge 4s$ and $\binom{2s}{s-1} \le \tfrac{1}{2}4^s$). Let $Y$ be a basis for $B''$ in $M \con A'$ and let $B_1 = B''-Y$, so $|B_1| \ge 4^{-s}n \ge 4^{s(t-1)}$. 	Let $M' = (M \con Y)|(A \cup B_0 \cup B_1).$
	Note that, since they are bases for $M$, both $A$ and $A' \cup B_0$ are spanning in $M'$, and $r(M') = n - |Y|$. Moreover, $A'$ is independent in $M'$ and $B_1 \subseteq \cl_{M'}(A')$. Thus,  
	\begin{align*}
		\sqcap_{(M')^*}(A_0,B_0) &= r(M' \del A_0) + r(M' \del B_0) - r(M' \del (A_0 \cup B_0)) - r(M') \\
		&\ge r_{M'}(A' \cup B_0) + r_{M'}(A) - r_{M'}(A' \cup B_1) - r(M')\\ 
		&= r(M') - r_{M'}(A')\\
		&= (n-|Y|) - (n-s) > 0,
	\end{align*}
	so $M'$ has a cocircuit $K \subseteq A_0 \cup B_0$ that intersects both $A_0$ and $B_0$. Let $a \in K \cap A_0$ and $b \in K \cap B_0$. Note that $B_1$ is independent in $M'$ and is spanned by $A'$; let $A_1 \subseteq A'$ be such that $|A_1| = |B_1|$ and $B_1$ spans $A_1$ in $M' \con (A'-A_1)$. Let
	\[M'' = M' \con (A'-A_1) \con (A_0 \cup B_0 - \{a,b\}).\] 
	Now $\{a,b\} \in A_0 \times B_0$ is a series pair of $M''$, and $A_1 \subseteq A'$ and $B_1 \subseteq B- B_0$ are bases of $M'' \con a \del b$. Since $r(M'' \con a \del b) = |B''| \ge 4^{s(t-1)}$, by the inductive hypothesis there is a rank-$(t-1)$ minor $N_0 = M'' \con (\{a\} \cup C) \del (\{b\} \cup D)$ of $M'' \con a \del b$ having a lower-triangular pair $(\bar{A_0}, \bar{B_0})$ with $\bar{A}_0 \subseteq A$ and $\bar{B}_0 \subseteq B$. Now $N = M'' \con C \del D$ has $\{a,b\}$ as a series pair and $N \con a \del b = N_0$. It follows that $((a,\bar{A}_0),(b,\bar{B}_0))$ is a lower-triangular pair in the rank-$t$ matroid $N$, giving the result. 
\end{proof}

\begin{lemma}\label{triangulartwo}
	Let $s \ge 2$ and $t \ge 0$ be integers. If $M$ is a matroid with $r(M) \ge (s4^s)^t$, and $(\bar{A},\bar{B})$ is an upper-triangular pair of $M$, then either 
	\begin{itemize}
		\item $M$ has a $U_{s,2s}$-minor $U$ in which $E(U) \cap \bar{A}$ and $E(U) \cap \bar{B}$ are bases, or
		\item $M$ has a $tU_{1,2}$-minor $N$ in which $E(N) \cap \bar{A}$ and $E(N) \cap \bar{B}$ are bases. 
	\end{itemize}
\end{lemma}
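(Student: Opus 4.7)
The plan is to prove this by induction on $t$.  Base cases $t \in \{0,1\}$ are immediate: for $t=0$ the empty minor suffices; for $t=1$, the upper-triangular property at $k=1$ gives $\cl_M(\{a_1\})=\cl_M(\{b_1\})$, so $\{a_1,b_1\}$ is a parallel pair, and $M \con (\bar A-\{a_1\}) \del (\bar B-\{b_1\}) \cong U_{1,2}$ by Lemma~\ref{staytriangular} (applied with $X=\bar A-\{a_1\}$), with both $\{a_1\}$ and $\{b_1\}$ as singleton bases.

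The crucial observation driving the inductive step is that a rank-$t$ matroid $N$ with a pair $(\bar A',\bar B')$ that is simultaneously upper-triangular and lower-triangular (in the same ordering) must equal $tU_{1,2}$: indeed, the flats $F_k=\cl_N(\{a_1',\ldots,a_k'\})$ and $G_{k+1}=\cl_N(\{a_{k+1}',\ldots,a_t'\})$ then satisfy $F_k\cap G_{k+1}=\emptyset$ and $r(F_k)+r(G_{k+1})=t$, so they are skew and $N=N|F_k\oplus N|G_{k+1}$; iterating over $k$ splits $N$ into $t$ parallel-pair components.  Thus the inductive step reduces to producing a rank-$t$ minor of $M$ whose ground set carries both an upper-triangular pair (inherited from $M$) and a lower-triangular pair (extracted in the style of Lemma~\ref{triangularone}), in a compatible ordering.

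To achieve this, I would adapt the proof of Lemma~\ref{triangularone} so that every contraction of a set $X\subseteq \bar A$ (resp.\ $X\subseteq \bar B$) is coupled with a deletion of $\varphi(X)\subseteq \bar B$ (resp.\ $\varphi(X)\subseteq \bar A$), so that Lemma~\ref{staytriangular} preserves upper-triangularity at every step.  Concretely, set $A_0=\{a_1,\dots,a_s\}$, $B_0=\{b_1,\dots,b_s\}$, $A'=\bar A-A_0$.  The contraction $M\con A'$ has rank $s$ with $A_0$ as a basis.  If $\tau_{s-1}(M\con A') \ge \binom{2s}{s-1}$, Lemma~\ref{udensity} (with $a=s-1$, $b=2s$) yields a $U_{s,2s}$-minor $U$ with $E(U)\cap A_0$ a basis, and the $\bar B$-condition is then automatic because every $s$-subset of $U_{s,2s}$ is a basis.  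Otherwise $\tau_{s-1}(M\con A') < \binom{2s}{s-1} \le 4^s/2$; a pigeonhole argument on $\bar B-B_0$ produces a rank-$(s-1)$ set $B''$ of $M\con A'$ with $|B''\cap(\bar B-B_0)|\ge 2(n-s)/4^s$, and taking a basis $Y\subseteq B''$ for $B''$ in $M\con A'$ (so $|Y|\le s-1$) and $B_1=(B''\cap(\bar B-B_0))-Y$ yields $|B_1|\ge (s4^s)^{t-1}$.  Now the $\varphi$-matched modification replaces Lemma~\ref{triangularone}'s ``$\con Y$'' step by ``$\con Y \del \varphi(Y)$''; the connectivity computation $\sqcap_{(M')^*}(A_0,B_0)>0$ still produces a cocircuit $K\subseteq A_0\cup B_0$ meeting both $A_0$ and $B_0$, and we pick $(a,b)\in (K\cap A_0)\times(K\cap B_0)$.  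Performing the remaining contractions of $A'-A_1$ and of $A_0\cup B_0-\{a,b\}$ (for suitable $A_1\subseteq A'$ with $|A_1|=|B_1|$), again paired with the $\varphi$-images, yields a minor in which $\{a,b\}$ is both a parallel circuit (from $M$'s upper-triangular structure) and a cocircuit (from $K$), hence a direct summand $U_{1,2}$; the complementary summand is upper-triangular of rank $\ge(s4^s)^{t-1}$ on a subset of $A_1\cup B_1$, where the inductive hypothesis supplies a $(t-1)U_{1,2}$-minor.

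The main obstacle is verifying compatibility: after the $\varphi$-matched modifications, one must check that (i) Lemma~\ref{triangularone}'s rank and cocircuit estimates survive the extra deletions, (ii) the cocircuit $K$ can be refined (using submodularity inside $M|F_s$, where level $1$ already provides the parallel pair $\{a_1,b_1\}$) so that the chosen $(a,b)$ is a parallel pair of $M$ and not merely a series pair, and (iii) the lower-triangular ordering produced by the adapted proof is compatible with the upper-triangular ordering inherited via Lemma~\ref{staytriangular}, so that the rank-$t$ complement minor has both structures in the same ordering and is therefore $tU_{1,2}$.  The rank bound $(s4^s)^t$ accumulates a factor $s$ (frame size $|A_0|=s$) times $4^s$ (majority constant $\binom{2s}{s-1}\le 4^s$) at each inductive step.
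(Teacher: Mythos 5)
Your skeleton is the right one and matches the paper's: peel off one $U_{1,2}$ supported on $A_0\cup B_0$, keep upper-triangularity alive by pairing every contraction of a set in $\bar A$ with deletion of its $\varphi$-image in $\bar B$ (Lemma~\ref{staytriangular}), and recurse on a large block $A_1\cup B_1$. But the two points you flag as ``obstacles'' are exactly the load-bearing steps, and neither is resolved by what you write. The first is the ordering of $Y$ relative to $B_1$. Taking an arbitrary basis $Y$ of $B''$ in $M\con A'$ and setting $B_1=B''-Y$ gives no control over where the indices of $Y$ sit among those of $B_1$. That control is essential: after passing to $M'=M\del(\bar B-(B_0\cup Y\cup B_1))\con(\bar A-(A_0\cup X\cup A_1))$, the closure identities that survive from upper-triangularity are only those for \emph{initial segments} of the surviving order, so to obtain $\cl_{M'}(A_0\cup X)=\cl_{M'}(B_0\cup Y)$ --- the identity that makes $A_0\cup B_0\cup X\cup Y$ and $A_1\cup B_1$ complementary $1$-separated blocks after contracting $X\cup Y$, and hence lets the $U_{1,2}$ found near $A_0\cup B_0$ and the $(t-1)U_{1,2}$ found in $A_1\cup B_1$ live in different direct summands --- you need every element of $Y$ to precede every element of $B_1$. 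The paper arranges this with Lemma~\ref{goodbasis}: order $B''$ by index, find an initial segment spanning at least $\lfloor m/s\rfloor$ of the later elements, let $Y$ be a basis of that initial segment and $B_1$ the spanned later elements. That is where the factor $s$ in $(s4^s)^t$ actually comes from, not from $|A_0|=s$ as you suggest.

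The second gap is your plan to make the chosen $(a,b)$ simultaneously a parallel pair and a cocircuit. The cocircuit $K\subseteq A_0\cup B_0$ produced by the connectivity computation has no reason to contain the parallel pair $\{a_1,b_1\}$, and ``refining $K$ by submodularity inside $M|F_s$'' is not an argument; moreover a parallel pair that is also a cocircuit is a stronger conclusion than you need. The efficient route is to observe that once $N'=M'\con(X\cup Y)$ decomposes as $N_0\oplus N_1$ with $N_0=N'|(A_0\cup B_0)$, it suffices to find a circuit of $N_0$ meeting both $A_0$ and $B_0$; this follows from $\sqcap_{N'}(A_0,B_0)\ge s-|Y|>0$ (using $|Y|\le s-1$), and such a circuit yields a $U_{1,2}$-minor of the summand $N_0$ with one element in $\bar A$ and one in $\bar B$. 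You must also verify, as the paper does, that $N_1$ can be rewritten as a contraction/deletion of $\varphi$-matched sets so that Lemma~\ref{staytriangular} applies and the inductive hypothesis is available on $A_1\cup B_1$. With these repairs your argument becomes the paper's proof. (Your opening observation that a pair which is both upper- and lower-triangular forces $tU_{1,2}$ is correct but ends up unused: the induction you actually run never produces a lower-triangular pair.)
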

\begin{proof}
	We may assume that the first outcome does not hold, and inductively that $t \ge 1$ and the lemma holds for smaller $t$. Let $n = r(M)$, let $\bar{A} = (a_1, \dotsc, a_n)$, $\bar{B} = (b_1, \dotsc, b_n)$ and let $\varphi$ be the involution associated with $(\bar{A},\bar{B})$. Since $(\bar{A},\bar{B})$ is upper-triangular, for each $k$ we have $\cl_M(\{a_1, \dotsc, a_k\}) = \cl_M(\{b_1, \dotsc, b_k\})$. Let $A_0 = \{a_1, \dotsc, a_s\}$ and $B_0 = \varphi(A_0)$. Let $A' = \bar{A}-A_0$ and $B' = B - B_0$. 
	
	Since $r(M \con A') = s$, by Lemma~\ref{udensity} we have $\tau_{s-1}(M \con A') \le \binom{2s}{s-1} \le 4^s-1$, so by a majority argument there is some $B'' \subseteq B'$ for which $r_{M \con A_1}(B'') \le s-1$ and $|B''| \ge (n-s)/(4^s-1) \ge 4^{-s}n$. Let $B'' = \{b_{i_1}, \dotsc, b_{i_m}\}$ where $1 \le i_1 < \dotsc < i_m \le n$. By Lemma~\ref{goodbasis}, there is some $h$ so that $\{b_{i_1}, \dotsc, b_{i_h}\}$ spans at least $\lfloor \tfrac{m}{s} \rfloor$ elements of $\{b_{i_{h+1}}, \dotsc, b_{i_m}\}$ in $M \con A'$. Let $Y$ be a basis for $\{b_{i_1}, \dotsc, b_{i_h}\}$ in $M \con A'$ and let $B_1 = \cl_{M \con A'}(Y) \cap \{b_{i_{h+1}}, \dotsc, b_{i_m}\}$; thus, $|Y| \le s-1$ and $Y$ is skew to $A'$ in $M$, and $|B_1| \ge \lfloor \tfrac{m}{s} \rfloor \ge \lfloor \tfrac{n}{s4^s} \rfloor \ge (s4^s)^{t-1}$. Let $A_1 = \varphi(B_1)$ and $X = \varphi(Y)$. Set
	\[M' = M \del (B-(B_0 \cup Y \cup B_1)) \con (A - (A_0 \cup X \cup A_1))\]
	Now  $E(M') = (A_0 \cup X \cup A_1) \cup (B_0 \cup Y \cup B_1)$, and by Lemma~\ref{staytriangular}, $(\bar{A},\bar{B}) \cap E(M')$ is upper-triangular in $M'$. Moreover, we have $i < j < k$ for all $a_i \in A_0, a_j \in X, a_k \in A_1$ and similar for $B_0,Y,B_1$. Using upper-triangularity and the fact that $Y$ is a basis for $B_1$ in $M \con A_1$, we have $\cl_{M'}(B_0 \cup Y) = \cl_{M'}(A_0 \cup X)$ and  $B_1 \subseteq \cl_{M'}(A_1 \cup X \cup Y)$. Let $N' = M' \con (X \cup Y)$ and for $i \in \{0,1\}$ let $N_i = N'|(A_i \cup B_i)$. Now
	\begin{align*}
		\lambda_{N'}(A_1 \cup B_1) &= r_{N'}(A_1 \cup B_1) + r_{M'}(A_0 \cup B_0 \cup X \cup Y) - r(M')\\
		&= r_{N'}(A_1) + r_{M'}(A_0 \cup X) - r(M')\\	
		&\le |A_1| + |A_0 \cup X| - r(M')  = 0,
	\end{align*}
	Therefore $N' = N_0 \oplus N_1$. Moreover, since $\sqcap_{M'}(A_1,X \cup Y) = 0$ and $B_1 \subseteq \cl_{M' \con Y}(A_1)$ we have $0 = \sqcap_{M' \con Y}(A_1,X) = \sqcap_{M' \con Y}(A_1 \cup B_1,X)$ and so $N_1 = (M' \con Y \del X)/A_0\del B_0$; by Lemma~\ref{staytriangular} it follows that $(\bar{A},\bar{B}) \cap E(N_1)$ is an upper-triangular pair in $N_1$. Since $r(N_0) = |B_1| \ge (s4^s)^{t-1}$, the inductive hypothesis gives a $(t-1)U_{1,2}$-minor $\hat{N}$ of $N_1$ in which $E(\hat{N}) \cap \bar{A}$ and $E(\hat{N}) \cap \bar{B}$ are bases. Since $\sqcap_{M'}(A_0,X) = 0$ and $\sqcap_{M'}(A_0,B_0) = s = r_{M'}(A_0)$, we have $\sqcap_{M' \con X}(A_0,B_0) = s$ and therefore $\sqcap_{N'}(A_0,B_0) \ge s - |Y| > 0$. Therefore $N_0$ contains a circuit intersecting both $A_0$ and $B_0$, and so $N_0$ has a $U_{1,2}$-minor $N_0$ intersecting $A_0$ and $B_0$. It follows that $N' = N_0 \oplus N_1$ has the required $tU_{1,2}$-minor. 	
\end{proof}

\begin{theorem}\label{selfdual}
	Let $s \ge 2$ and $t \ge 0$. If $A$ and $B$ are disjoint bases of a matroid $M$ with $r(M) \ge 4^{s(s4^s)^t}$, then either
	\begin{itemize}
		\item $M$ has a $U_{s,2s}$-minor $U$ in which $E(U) \cap A$ and $E(U) \cap B$ are bases, or
		\item $M$ has a $tU_{1,2}$-minor $N$ in which $E(N) \cap A$ and $E(N) \cap B$ are bases. 
	\end{itemize}
\end{theorem}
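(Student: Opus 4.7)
The plan is to chain Lemmas~\ref{triangularone} and~\ref{triangulartwo} together via matroid duality; indeed, the rank bound $4^{s(s4^s)^t}$ is visibly the composition of the bounds in those two lemmas, applied with intermediate parameter $t' := (s4^s)^t$. Both alternative conclusions of the theorem, namely a $U_{s,2s}$-minor and a $tU_{1,2}$-minor with bases $E(\cdot)\cap A$ and $E(\cdot)\cap B$, involve self-dual matroids, and the basis-partition property is itself self-dual (a partition of $E(N)$ into two bases of $N$ is automatically a partition into two bases of $N^*$). This self-duality is what allows the two lemmas, one of which produces a lower-triangular pair and the other of which requires an upper-triangular pair, to be combined.

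First I would apply Lemma~\ref{triangularone} with its ``$t$'' replaced by $t'$. The rank hypothesis $r(M)\ge 4^{st'}$ is exactly the hypothesis of the theorem, so assuming the $U_{s,2s}$-conclusion fails (in which case we are already done, with $E(U)\cap A$ and $E(U)\cap B$ bases), we obtain a rank-$t'$ minor $M_1$ of $M$ carrying a lower-triangular pair $(\bar{A}_0,\bar{B}_0)\in A^{t'}\times B^{t'}$. By definition of lower-triangular, $(\bar{A}_0,\bar{B}_0)$ is an upper-triangular pair of $M_1^*$. Since $\bar{A}_0\cup\bar{B}_0=E(M_1)$ is partitioned into two bases of $M_1$, the dual $M_1^*$ has the same ground set and rank $r(M_1^*)=|E(M_1)|-t'=t'=(s4^s)^t$.

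Next I would apply Lemma~\ref{triangulartwo} to $M_1^*$ with parameter $t$ and this upper-triangular pair; the rank hypothesis is met with equality. The output is either a $U_{s,2s}$-minor $U'$ or a $tU_{1,2}$-minor $N'$ of $M_1^*$ in which $E(\cdot)\cap\bar{A}_0$ and $E(\cdot)\cap\bar{B}_0$ are bases. Dualising closes the argument: both $U_{s,2s}$ and $tU_{1,2}$ are self-dual, so $(U')^*$ and $(N')^*$ are minors of $M_1$ of the required isomorphism types; and the basis-partition property transfers to the dual because complements of bases are bases. Finally, since $\bar{A}_0\subseteq A$ and $\bar{B}_0\subseteq B$, the partition certified in the dualised minor is a partition into two subsets of $A$ and $B$ respectively, yielding the required conclusion about $E(\cdot)\cap A$ and $E(\cdot)\cap B$.

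There is no real obstacle here; the substantive work is encapsulated in the two preceding lemmas, and the theorem amounts to bookkeeping. The only small care needed is to verify the self-duality of the two target minors and that the basis-intersection property survives both the passage to the dual and the weakening of $\bar{A}_0,\bar{B}_0$ to their supersets $A,B$, both of which are immediate.
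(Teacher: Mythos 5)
Your proof is correct and follows essentially the same route as the paper: chain Lemma~\ref{triangularone} and Lemma~\ref{triangulartwo} with intermediate parameter $t'=(s4^s)^t$, converting the lower-triangular pair into an upper-triangular one by duality. The only cosmetic difference is that the paper dualizes up front (applying Lemma~\ref{triangularone} to $(M|(A\cup B))^*$) so that the output of Lemma~\ref{triangulartwo} is directly a minor of $M$, whereas you dualize the final minor back at the end, which is fine since $U_{s,2s}$ and $tU_{1,2}$ are self-dual and the two basis intersections partition the ground set, so each is a basis of the dual.
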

\begin{proof}
	Let $t' = (s4^s)^t$. Assume that the first outcome does not hold. Note that $A$ and $B$ are disjoint bases of $M_1 = (M|(A \cup B))^*$. By Lemma~\ref{triangularone} applied to $M_1$, we see that $M_1$ has a rank-$t'$-minor $M_2$ having a lower-triangular pair $(\bar{A},\bar{B}) \in A^{t'} \times B^{t'}$. Now $M_2^*$ is a rank-$t'$ minor of $M$ and $(\bar{A},\bar{B})$ is an upper-triangular pair of $M_2^*$; the result now follows from Lemma~\ref{triangulartwo}.  
\end{proof}

We can now  prove Theorem~\ref{unavoidable}, which we restate here in a stronger form. 

\begin{theorem}\label{bigmatroid}
	Let $s \ge 0$ be an integer and $M$ be a matroid. Either
	\begin{itemize}
		\item $M$ has a $U_{s,2s}$-minor, 
		\item $M$ has an $s U_{1,2}$-minor, or
		\item $M$ has a minor $N$ so that $|M| - |N| \le 4^{4^{2s^2}}$ and every element of $N$ is a loop or a coloop.
	\end{itemize} 
\end{theorem}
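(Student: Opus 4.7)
The plan is to use Theorem~\ref{selfdual} with $t=s$ to establish a uniform bound on $r_M(E(M)-B)$ over all bases $B$ of $M$, and then to build the required trivial minor by contracting a basis of the flat $\cl_M(E(M)-B)$ and deleting a dual basis of its complement. The cases $s\in\{0,1\}$ are handled directly: $U_{0,0}$ is a minor of every matroid, and for $s=1$ a matroid has no $U_{1,2}$-minor if and only if every element is already a loop or a coloop. So we may assume $s\ge 2$ and that $M$ has neither a $U_{s,2s}$-minor nor an $sU_{1,2}$-minor.

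Set $T = 4^{s(s4^s)^s}$, so that Theorem~\ref{selfdual} (with $t=s$) produces a $U_{s,2s}$- or $sU_{1,2}$-minor in any matroid having two disjoint bases and rank at least $T$. The central step will be to prove the rank bound $r_M(E(M)-B) < T$ for every basis $B$ of $M$. Supposing it fails, I would pick a maximal independent subset $I^* \subseteq E(M)-B$, giving $|I^*| = r_M(E(M)-B) \ge T$, and extend $I^*$ to a basis $I^* \cup X$ of $M$ with $X\subseteq B$. Then in $M/X$ both $B-X$ and $I^*$ are independent sets of size $r(M)-|X|=|I^*|=r(M/X)$ and are disjoint (the first lies in $B$, the second in $E(M)-B$), so they are disjoint bases of $M/X$. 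Since $r(M/X)\ge T$, Theorem~\ref{selfdual} delivers a $U_{s,2s}$- or $sU_{1,2}$-minor of $M/X$, hence of $M$, a contradiction.

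With the rank bound in hand, the construction is direct. Fix any basis $B$ of $M$ and let $F = \cl_M(E(M)-B)$; then $r_M(F) < T$ and $|F|\ge r(M^*)$ since $F \supseteq E(M)-B$. Take $C$ to be a basis of $M|F$ and $D$ to be a basis of $M^*|(E(M)-F)$. Then $|C|=r_M(F)<T$, and the dual-rank identity $r_{M^*}(E(M)-F) = |E(M)-F|+r_M(F)-r(M)$ together with $|F|\ge r(M^*)$ gives $|D|\le r_M(F)<T$. In $N = M/C \del D$, every element of $F-C$ is a loop (since $\cl_M(C)=F$), and every element of $(E(M)-F)-D$ is a coloop (since $D$ cospans $E(M)-F$ in $M$). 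Hence $N$ is trivial with $|M|-|N|=|C|+|D|<2T$; a routine comparison of $s(s4^s)^s = s^{s+1}4^{s^2}$ with $4^{2s^2}$ confirms $2T \le 4^{4^{2s^2}}$.

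The main obstacle is the rank-bound step: $M$ itself need not have two disjoint bases, so Theorem~\ref{selfdual} does not apply to $M$ directly. The key trick is to contract a suitable subset $X$ of the chosen basis $B$ so that $B-X$ becomes a basis of $M/X$ with the same rank as a maximal independent subset $I^*$ of $E(M)-B$, with the two sets automatically disjoint. Once this observation is made, the rest of the argument reduces to the elementary identity that, for any flat $F$ of $M$, contracting a basis of $F$ and deleting a basis of $E(M)-F$ in $M^*$ covers $E(M)$ as $\cl_M(C)\cup \cl_{M^*}(D)$, which is precisely what forces every remaining element to be a loop or a coloop of $M/C\del D$.
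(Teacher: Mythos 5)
Your proof is correct and follows essentially the same route as the paper: both reduce to $s\ge 2$, use Theorem~\ref{selfdual} to bound the rank of $E(M)-B$ for a basis $B$ (your contraction of $X\subseteq B$ to produce two disjoint bases is exactly the paper's implicit "rank-$h$ minor with two disjoint bases"), and then contract a basis of $E(M)-B$ and delete a small cospanning subset of $B$ to leave only loops and coloops. The only differences are cosmetic (quantifying over all bases, and working with the flat $F=\cl_M(E(M)-B)$ rather than $E(M)-B$ itself).
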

\begin{proof}
	The result is trivial for $s \le 1$. Suppose that $s \ge 2$ and let $h = \tfrac{1}{2}4^{4^{2s^2}}$; note that $h \ge 4^{s(s4^s)^s}$. Let $B$ be a basis for $M$ and let $X = E(M) - B$. If $r_M(X) \ge h$ then $M$ clearly has a rank-$h$ minor with two disjoint bases, and the result follows from Theorem~\ref{selfdual}. Otherwise, let $X'$ be a basis for $M|X$; now $r^*_{M \con X'}(B) = r_M(X') \le h$, so there exists $B' \subseteq B$ so that $|B'| \le h$ and every element of $B-B'$ is a coloop of $N = M \con X' \del B'$. Since $|B'|,|X'| \le h$ and every element of $X-X'$ is a loop of $N$, we have the required minor. 
\end{proof}

\section{Complete Matroids}

Let $a \ge 2$ be an integer. We say a matroid $M$ is \emph{$a$-complete} if $M$ has a basis $B$ such that, for every $I \subseteq B$ with $2 \le |I| \le a$, there is some $e \in E(M)$ for which $I \cup \{e\}$ is a circuit of $M$.  We call such a $B$ a \emph{joint-set} of $M$.  For example, a $2$-complete matroid with joint-set $B$ is the same as a spanning $B$-clique restriction. We will freely use the easily proved fact that, if $M$ is an $a$-complete matroid with joint-set $B$, and $B' \subseteq B$ is a basis of a contraction-minor $M'$ of $M$, then $M'$ is $a$-complete with joint-set $B'$.

Huge $2$-complete matroids do not contain large $3$-complete minors. However, in Lemma~\ref{upgradecomplete} we prove that, for each integer $a>3$, a huge $3$-complete matroid does contain a large $a$-complete minors. Then, in Lemma~\ref{3completewincor}, we prove that a huge $3$-complete matroid has either a large balanced uniform matroid or a large projective geometry as a minor. 
\begin{lemma}\label{buildcomplete}
	Let $m > a \ge 2$ be integers and let $h = \binom{m}{a+1}$. If $M$ is an $a$-complete matroid with joint-set $B$, and there are disjoint sets $B_0, \dotsc, B_h \subseteq B$ so that $|B_0| \ge m$, and for each $i \in \{1,\dotsc, h\}$ we have $|B_i| > a$ and there is some $x_i \in E(M)$ for which $B_i \cup \{x_i\}$ is a circuit, then $M$ has an $(a+1)$-complete minor of rank $m$. 
\end{lemma}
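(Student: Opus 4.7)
The plan is to build a rank-$m$ $(a+1)$-complete minor $N$ of $M$ whose joint-set is a fixed $m$-subset $B_0' \subseteq B_0$. Since $h = \binom{m}{a+1}$ is exactly the number of $(a+1)$-subsets of $B_0'$, I enumerate these as $S_1,\dotsc,S_h$ and arrange that the circuit $B_i \cup \{x_i\}$ provides the witness for $S_i$ in $N$. Concretely, for each $i$ pick an $(a+1)$-subset $T_i \subseteq B_i$ (possible since $|B_i| > a$) with a bijection $\phi_i\colon T_i \to S_i$; then for each $t \in T_i$ with $s = \phi_i(t)$, use the $a$-completeness of $M$ (applied to the pair $\{s,t\}$, valid since $a \ge 2$) to pick a witness $y_{i,t} \in E(M)$ with $\{s,t,y_{i,t}\}$ a circuit of $M$.

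Writing $Z = \bigcup_i (B_i - T_i)$, $Y = \{y_{i,t}\}$, and $W = B - B_0 - \bigcup_i B_i$, I take
\[
N \;=\; M \con \bigl(Y \cup Z \cup (B_0 - B_0') \cup W\bigr) \del \bigl(\textstyle\bigcup_i T_i\bigr).
\]
The picture in the intermediate minor $M \con (Y \cup Z)$ is that each circuit $B_i \cup \{x_i\}$ shrinks to the circuit $T_i \cup \{x_i\}$ (since $B_i - T_i \subseteq Z$ is an independent subset of that circuit), and each triangle $\{s,t,y_{i,t}\}$ collapses so that $s$ and $t$ form a parallel pair; the transposition $(s\;t)$ is therefore an automorphism sending the circuit $T_i \cup \{x_i\}$ to $S_i \cup \{x_i\}$. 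Contracting the remaining joint-set elements $(B_0 - B_0') \cup W$ will then reduce the rank to $m$, and deleting $\bigcup_i T_i$ will remove the redundant parallel copies; neither operation destroys the swapped circuits, nor the small circuits $I \cup \{e_I\}$ coming from the $a$-completeness witnesses $e_I$ for subsets $I \subseteq B_0'$ with $2 \le |I| \le a$.

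The main obstacle is verifying that $T_i \cup \{x_i\}$ genuinely is a circuit of $M \con (Y \cup Z)$; this reduces to the rank identity $r_M(T_i \cup \{x_i\} \cup Y \cup Z) = (a+1) + |Y| + |Z|$, which in turn amounts to adding the elements of $Y$ one at a time to the independent set $T_i \cup Z$ and checking that each $y$-witness contributes $+1$ to the rank. The key tool will be a submodularity line argument: each $y_{j,t}$ lies on the rank-$2$ line $\cl_M(\{\phi_j(t),t\})$ spanned by two elements of the basis $B$, so submodularity together with the independence of $B$ forces the intersection of this line with the current closure to be a rank-$(\le\!1)$ flat in which any non-loop point must already come from the basis, hence is distinct from the non-basis element $y_{j,t}$. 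After reducing to the loopless case this yields the rank identity, and combined with the standard survival facts for circuits under contraction and deletion, plus a routine rank calculation showing $r(N) = m$ with $B_0'$ a basis, we conclude that $N$ is the desired $(a+1)$-complete minor.
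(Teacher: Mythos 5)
Your proposal is correct and is essentially the paper's proof: both arguments use the $a\ge 2$ (pair) case of completeness to hang a triangle $\{s,t,y\}$ between each element $t$ of (a transversal of) $B_i$ and the corresponding element $s$ of the $i$-th $(a+1)$-subset of $B_0$, contract the third points $y$ to make $s$ and $t$ parallel, and thereby transport the circuit $B_i\cup\{x_i\}$ onto $S_i\cup\{x_i\}$, with the remaining contractions just normalizing the rank to $m$. The only difference is bookkeeping: for the key skewness/rank verification the paper observes that in $M\con B_0$ each $y$ becomes parallel to a distinct element of the independent set $B-B_0$ (so the set of all $y$'s is independent and skew to $B_0$), which is a cleaner route to the same facts your one-element-at-a-time submodularity argument would establish.
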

\begin{proof}
	By contracting a subset of $B$, we may assume that $|B_0| = m$, that $|B_i| = a+1$ for each $i \ge 1$, and that $(B_0, \dotsc, B_h)$ is a partition of $B$. Let $\{J_1, \dotsc, J_h\}$ be the $(a+1)$-element subsets of $B_0$. For each $i \ge 1$, let $J_i = \{e_i^1, \dotsc, e_i^{a+1}\}$ and $B_i = \{b_i^1, \dotsc, b_i^{a+1}\}$. Since $M$ is $2$-complete, there exist $f_i^1, \dotsc, f_i^{a+1}$ so that each $\{e_i^j,f_i^j,b_i^j\}$ is a triangle of $M$. Let $F_i = \{f_i^1, \dotsc, f_i^{a+1}\}$. Since $J_i \cup B_i$ is independent, we see that in the matroid $M \con F_i$, the sets $J_i$ and $B_i$ are independent and each $e_i^j$ is parallel to $b_i^j$. It follows that $J_i \cup \{x_i\}$ is a circuit of $M \con F_i$. Moreover, in $M \con B_0$, the set $F_i$ is independent and each $f_i^j$ is parallel to $b_i^j$.  
	
	Let $F = \cup_{i=1}^h F_i$ and let $M_0 = M \con F$. Since each $B_i$ (for $i \ge 1$) is spanned by $J_i$ in $M \con F$, the set $B_0$ spans $M_0$. Moreover, in the matroid $M \con B_0$, the set $B-B_0$ is independent, and each $F_i$ is an independent set spanned by $B_i \subseteq B-B_0$, so $r_{M \con B_0}(F) = r_M(F) = |F|$ and therefore $r_{M \con F}(B_0) = r_M(B_0) = |B_0|$ and $B_0$ is a basis for $M_0$. It follows that $M_0$ is $a$-complete with joint-set $B_0$.  Finally, for each $i \ge 1$, the set $J_i \cup \{x_i\}$ is a circuit in $M \con F_i$ and $J_i$ is independent in $M \con F$, so $J_i \cup \{x_i\}$ is a circuit of $M_0$. Thus, $M_0$ is $(a+1)$-complete with joint-set $B_0$. 
\end{proof}

\begin{lemma}\label{upgradecomplete}
	Let $a,m$ be integers with $m > a \ge 3$. If $M$ is $a$-complete and $r(M) \ge m^{a+1}$, then $M$ has an $(a+1)$-complete minor of rank $m$.
\end{lemma}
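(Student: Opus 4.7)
The plan is to apply Lemma~\ref{buildcomplete}. In a suitable minor $N$ of $M$ that is still $a$-complete on some joint-set $B'$, we will exhibit pairwise-disjoint subsets $B_0, B_1, \dotsc, B_h \subseteq B'$ with $|B_0| = m$ and $|B_i| = a+1$ for $i \ge 1$ (where $h = \binom{m}{a+1}$), together with elements $z_i \in E(N)$ so that $B_i \cup \{z_i\}$ is a circuit of $N$ (automatically of size $a+2 > a+1$). Given this data, Lemma~\ref{buildcomplete} applied to $N$ immediately produces the required $(a+1)$-complete rank-$m$ minor of $M$.

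Since $r(M) \ge m^{a+1}$, the joint-set $B$ of $M$ has enough elements to support the following disjoint partition: $B_0$ of size $m$; an $(a+1)$-subset $B_i$ for each $i$; one reserve basis element $c_i$ for each $i$; and, when $a \ge 4$, a selection $s_{i,1}, \dotsc, s_{i,a-3} \in B_i$ of ``shared'' elements. The inequality $m + (2a-1)h \le m^{a+1}$ holds for $a \ge 3$ and $m > a$. For each $i$, let $J^1_i, J^2_i$ be the two $a$-subsets of $B_i \cup \{c_i\}$ satisfying $J^1_i \cup J^2_i = B_i \cup \{c_i\}$ and $J^1_i \cap J^2_i = \{c_i, s_{i,1}, \dotsc, s_{i,a-3}\}$, with $J^1_i \setminus J^2_i = \{b_{i,1}, b_{i,2}\}$ and $J^2_i \setminus J^1_i = \{b_{i,3}, b_{i,4}\}$. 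By $a$-completeness of $M$ there exist $z_i, y_i \in E(M) \setminus B$ with $C^1_i = J^1_i \cup \{z_i\}$ and $C^2_i = J^2_i \cup \{y_i\}$ circuits of $M$; any coincidence among the $y_i, z_i$ would, via strong circuit elimination, produce a dependent subset of $B$, so these auxiliary elements can be taken pairwise distinct.

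Set $N = M / \{y_1, \dotsc, y_h\}$. Contracting $y_i$ turns $J^2_i$ into a circuit of $N$, so $c_i \in \cl_N(J^2_i \setminus \{c_i\}) \subseteq \cl_N(B_i)$; hence $B' = B \setminus \{c_1, \dotsc, c_h\}$ is a basis of $N$ containing $B_0 \cup B_1 \cup \dotsb \cup B_h$. A short elimination argument shows that $N$ remains $a$-complete on $B'$: the $M$-circuit completing any $k$-subset of $B'$ (with $2 \le k \le a$) cannot be a contracted $y_j$, since $y_j$'s fundamental circuit on $B$ contains $c_j \notin B'$.

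The main technical obstacle is verifying that in $N$, each $B_i \cup \{z_i\}$ is a circuit of size exactly $a+2$. Dependence is immediate: $z_i \in \cl_M(J^1_i)$ and, after contracting $y_i$, we have $J^1_i \subseteq \cl_N(B_i)$. Minimality requires $z_i \notin \cl_N(B_i \setminus \{b_{i,k}\})$ for each $k$, equivalently $z_i \notin \cl_M((B_i \setminus \{b_{i,k}\}) \cup \{y_i\})$; this amounts to showing that no combination of $C^1_i$ and $C^2_i$ via strong circuit elimination yields a circuit shorter than $B_i \cup \{z_i\}$ inside $B_i \cup \{y_i, z_i\}$. The cases $b_{i,k} \in J^1_i \triangle J^2_i$ fall out immediately from the circuit axioms applied to $C^1_i$ and $C^2_i$. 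The subtle case, arising only for $a \ge 4$ when $b_{i,k}$ is a shared element $s_{i,j}$, requires a nondegeneracy between $y_i$ and $z_i$ (the flat-intersection $\cl_M(J^1_i) \cap \cl_M((B_i \setminus \{b_{i,k}\}) \cup \{y_i\})$ picks up an extra direction corresponding to the $c_i$-component of $y_i$'s dependence). One resolves this by choosing $y_i$ and $z_i$ appropriately among the possibly several circuit-completers provided by $a$-completeness, or by augmenting the reserve with one additional buffer basis element per index. Once all $h$ disjoint $(a+2)$-circuits $B_i \cup \{z_i\}$ are secured in $N$, Lemma~\ref{buildcomplete} applied to $N$ with data $(B_0, B_1, \dotsc, B_h, z_1, \dotsc, z_h)$ completes the proof.
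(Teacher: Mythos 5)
Your overall strategy---manufacture, inside a contraction-minor that is still $a$-complete, $h=\binom{m}{a+1}$ disjoint $(a+1)$-subsets of the joint-set each completed to a circuit, and then invoke Lemma~\ref{buildcomplete}---matches the paper's, but the gadget you use to produce each circuit $B_i\cup\{z_i\}$ is different, and the difficulty you flag as ``the subtle case'' is a genuine gap rather than a technicality. Your circuits $C^1_i=J^1_i\cup\{z_i\}$ and $C^2_i=J^2_i\cup\{y_i\}$ overlap in the $a-2$ basis elements $\{c_i,s_{i,1},\dotsc,s_{i,a-3}\}$, and for $a\ge 4$ nothing rules out $z_i\in\cl_M\bigl((B_i\setminus\{s_{i,j}\})\cup\{y_i\}\bigr)$, in which case $B_i\cup\{z_i\}$ properly contains a circuit of $N$ and the hypothesis of Lemma~\ref{buildcomplete} fails. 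This degeneracy really occurs: represent the joint-set by standard basis vectors over $\mathbb{Q}$ and let the unique completer of each $a$-subset $I$ be $\sum_{b\in I}e_b$; then $z_i-y_i=\sum_{b\in J^1_i\setminus J^2_i}e_b-\sum_{b\in J^2_i\setminus J^1_i}e_b$, so $(B_i\setminus\{s_{i,j}\})\cup\{y_i,z_i\}$ is already dependent, and since the completers are forced you cannot ``choose $y_i$ and $z_i$ appropriately.'' Your other suggested repair (an extra reserve element) is also blocked: for $z_i$ to land in $\cl_N(B_i)$ after contracting the single element $y_i$ you need $J^1_i\setminus B_i\subseteq J^2_i$ and $|J^2_i\setminus B_i|\le 1$, which forces $|J^1_i\cap J^2_i|=a-2$ and hence shared elements inside $B_i$ whenever $a\ge 4$.

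The paper sidesteps this by making its two circuits meet in exactly \emph{one} basis element: it takes an $(a+2)$-set $B_i'\subseteq B$, an $a$-subset $X_i$ and a $3$-subset $Y_i$ of $B_i'$ with $X_i\cap Y_i=\{z_i\}$ (this is where $a\ge 3$ enters), and completers $e_i,f_i$. The restriction to $(X_i\cup Y_i\cup\{e_i,f_i\})-\{z_i\}$ is then the $2$-sum of the two circuits at $z_i$, hence itself a circuit, and contracting $f_i$ yields the circuit $B_i\cup\{e_i\}$ on $B_i=B_i'-\{z_i\}$ outright, with no minimality verification needed. To salvage your argument you should replace the pair $(J^1_i,J^2_i)$ by such an $(X_i,Y_i)$ pair, or else supply a proof of the missing non-degeneracy; as written the proof is incomplete for every $a\ge 4$.
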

\begin{proof}
	Let $B$ be the joint-set of $M$. Note that $r(M) \ge m + (a+2)\binom{m}{a+1}$. Let $h = \binom{m}{a+1}$ and let $B_0,B_1', \dotsc, B_h'$ be disjoint subsets of $B$ so that $|B_0| = m$ and $|B'_i| = a+2$ for $i \ge 1$. For each $i \ge 1$, let $X_i,Y_i$ be subsets of $B'_i$ with $|X_i| = a$, $|Y_i| = 3$ and $|X_i \cap Y_i| = 1$, let $\{z\} = X_i \cap Y_i$, and let $e_i,f_i$ be such that $X_i \cup \{e_i\}$ and $Y_i \cup \{f_i\}$ are circuits of $M$. The matroid $M|((X_i \cup Y_i \cup \{e_i,f_i\}) -\{z_i\})$ is thus the $2$-sum of two circuits at a common element $z_i$, so $(X_i \cup Y_i \cup \{e_i,f_i\}) - \{z_i\}$ is a circuit of $M$. Thus, $B_i = B'_i - \{z_i\}$ is independent and $B_i \cup \{e_i\}$ is a circuit of $M \con f_i$. Let $F = \{f_1, \dotsc, f_h\}$. Since the sets $B_i'$ are mutually skew in $M$ and each $B_i'$ spans $f_i$ in $M$, each $B_i$ is an $(a+1)$-element independent set in $M \con F$ and each $B_i \cup \{e_i\}$ is a circuit of $M \con F$. Clearly $M \con F$ is $a$-complete with joint-set $B - \{z_1, \dotsc, z_h\}$. Thus, the sets $B_0, \dotsc, B_h$ in satisfy the hypotheses of Lemma~\ref{buildcomplete} in $M \con F$, so $M \con F$ has the required minor. 
\end{proof}

\begin{lemma}\label{3completewin}
	Let $s,p \ge 2$ be integers. If $M$ is a $3$-complete matroid with $r(M) \ge (16s^2p^4)^{(2p)^{2p}}$, then $M$ has a minor $N$ with $r(N) > 1$ and $\tau_{s-1}(N) \ge r(N)^p$. 
\end{lemma}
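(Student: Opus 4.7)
The plan splits into two phases: first, iterate Lemma~\ref{upgradecomplete} to boost the given 3-complete hypothesis up to a $(2p)$-complete minor $M'$ of moderately large rank; second, exploit the rich joint structure of $M'$ to extract a sub-minor $N$ with $\tau_{s-1}(N) \ge r(N)^p$.

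For the first phase, each application of Lemma~\ref{upgradecomplete} takes an $a$-complete matroid of rank $m^{a+1}$ to an $(a+1)$-complete minor of rank $m$; iterating for $a = 3, 4, \ldots, 2p-1$ descends from rank $r_0$ to rank at least $r_0^{6/(2p)!}$. Since $(2p)! \le (2p)^{2p}$, the given bound $r_0 \ge (16s^2p^4)^{(2p)^{2p}}$ leaves a $(2p)$-complete minor $M'$ of rank $R \ge (16s^2p^4)^{6}$, comfortably above the threshold $16s^2p^4$ that will be needed in the second phase.

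For the second phase, let $B'$ be the joint-set of $M'$. For each $2p$-subset $I \subseteq B'$, fix a joint $e_I$ with $I \cup \{e_I\}$ a circuit; a short circuit-elimination argument (applied to the would-be pair of circuits $I \cup \{e\}$ and $I' \cup \{e\}$) shows these joints are pairwise distinct, so $|J| = \binom{R}{2p}$ where $J := \{e_I : |I|=2p\}$. Let $N$ be the simplification of $M'|(B' \cup J)$, so $r(N)=R$. The key claim is that every rank-$(s-1)$ subset $S$ of $N$ satisfies $|S| \le (s-1) + \binom{s-1}{2p}$; granting this,
\[
  \tau_{s-1}(N) \;\ge\; \frac{|E(N)|}{(s-1)+\binom{s-1}{2p}} \;\ge\; \frac{\binom{R}{2p}}{(s-1)+\binom{s-1}{2p}}\;\ge\; R^p,
\]
where the last step is a short calculation splitting into the cases $s-1\ge 2p$ and $s-1<2p$ and using $R \ge 16s^2p^4$.

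The main obstacle is establishing the size bound on rank-$(s-1)$ sets of $N$. Under the idealized hypothesis that the only circuits of $N$ are the defining $I \cup \{e_I\}$, the bound follows by a clean case analysis: any such $S$ consists of $k \le s-1$ basis elements, at most $s-1-k$ \emph{external} joints $e_I$ with $I \not\subseteq B' \cap S$ (each adding $1$ to the rank), together with the $\binom{k}{2p}$ \emph{internal} joints whose index sets sit inside $B' \cap S$, giving $|S| \le (s-1) + \binom{k}{2p} \le (s-1) + \binom{s-1}{2p}$. However, in a general $(2p)$-complete $M'$ there may be additional circuits restricting nontrivially to $N$ — for example, triples of joints that happen to be dependent in $M'$ — and these could in principle pack more joints into a single low-rank flat. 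Handling such extraneous circuits, either by deleting/contracting to remove them while keeping a large fraction of $J$, or by replacing the clean counting with a more robust argument that tolerates their presence, is the delicate step that drives the exact constants in the hypothesis.
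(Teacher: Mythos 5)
Your overall strategy is the same as the paper's: iterate Lemma~\ref{upgradecomplete} to reach a $(2p)$-complete minor of rank $R\ge 16s^2p^4$ (your rank bookkeeping in the first phase is fine), then argue that the $\binom{R}{2p}$ joints cannot be covered by few sets of rank $s-1$. The genuine gap is exactly the step you flag at the end: you verify the counting bound only under the ``idealized hypothesis'' that the joints are freely placed, and you leave unresolved how to handle extra dependencies among joints. No such hypothesis is needed, and the fix is a closure argument rather than an analysis of circuits among the joints. Write $t=2p$, let $B'$ be the joint-set and $X=\{e_I\}$ the joints. The key fact is: for $J\subseteq B'$, if $e_I\in\cl(J)$ then $I\subseteq J$. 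Indeed, $I$ and $J$ are subsets of the basis $B'$, so $r(I)+r(J)=r(I\cup J)+r(I\cap J)$, whence $\cl(I)\cap\cl(J)=\cl(I\cap J)$; since $I\cup\{e_I\}$ is a circuit, $e_I\in\cl(I)$ but $e_I\notin\cl(I'')$ for any $I''\subsetneq I$ (a circuit in $I''\cup\{e_I\}$ would be a proper subset of the circuit $I\cup\{e_I\}$), forcing $I\cap J=I$. Consequently $\cl(J)$ contains at most $\binom{|J|}{t}$ joints, regardless of what other circuits the matroid has --- dependencies among joints cannot ``pack more joints into a low-rank flat'' than this.

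Given that fact, take any $S\subseteq X$ with $r(S)\le s-1$, choose a basis $B_S$ of $S$, and let $J\subseteq B'$ be the union of the defining $t$-sets of the at most $s-1$ elements of $B_S$; then $|J|<st$ and $S\subseteq\cl(B_S)\subseteq\cl(J)$, so $|S|<(st)^t$. Since any rank-$(s-1)$ cover must in particular cover $X$, this yields $\tau_{s-1}\ge |X|(st)^{-t}\ge (st^2)^{-t}R^t\ge R^{t/2}=R^p$ once $R\ge s^2t^4$. Two smaller remarks: you need not pass to the simplification of $M'|(B'\cup J)$ nor bound arbitrary rank-$(s-1)$ subsets of it (which contain frame elements as well as joints); it suffices to bound the number of \emph{joints} in a rank-$(s-1)$ set and take $N$ to be the $(2p)$-complete minor itself. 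Your circuit-elimination argument that the $e_I$ are pairwise distinct is correct and is implicitly used in the paper as well.
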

\begin{proof}
	Let $t = 2p$. Let $m_t = s^2t^4$, and for each $i \in \{3,\dotsc, t\}$ recursively set $m_{i-1} = (m_i)^i$. Therefore $m_3 \le (m_t)^{t^t} \le r(M)$. Now $M$ is a $3$-complete matroid of rank at least $m_3$; inductively by choice of the $m_i$ and Lemma~\ref{upgradecomplete}, for each $a \in \{3, \dotsc, t\}$ there is an $a$-complete minor of $M$ of rank at least $m_a$. Setting $a = t$, we see that $M$ has a $t$-complete minor $N$ with $r(N) \ge s^2t^4$. 
		
	Let $B$ be the frame of $N$. For each $t$-element set $I \subseteq B$, let $e_I$ be an element of $N$ such that $I \cup \{e_I\}$ is a circuit. Let $X$ be the set of all such $e_I$; thus $|X| = \binom{r(N)}{t} \ge t^{-t}r(N)^t$. 
	
	We now argue that $\tau_{s-1}(N) \ge r(N)^p$. Suppose that $S \subseteq X$ and $r_N(S) < s$. Let $B_S$ be a basis for $N|S$. Since each $x \in X$ is spanned by a $t$-element subset of $B$, there is a set $J \subseteq B$ with $|J| < st$ so that $J$ spans $B_S$ and thus spans $S$. The set $J$ spans at most $\binom{|J|}{t} < (st)^t$ elements of $X$, so $|S| < (st)^{t}$. Thus, each rank-$(s-1)$ set of $N$ contains at most $(st)^t$ elements of $X$, so \[\tau_{s-1}(N) \ge (st)^{-t}|X| \ge (st^2)^{-t}r(N)^t \ge (r(N))^{-t/2}r(N)^{t} = r(N)^p,\]  as required. 
\end{proof}

Applying the above with $p = f_{\ref{pgdensity}}(s,n)$ gives the following. 

\begin{lemma}\label{3completewincor}
	There is a function $f_{\ref{3completewincor}}\colon \bZ^2 \to \bZ$ so that, if $s,n \ge 2$ are integers and $M$ is a $3$-complete matroid with $r(M) \ge f_{\ref{3completewincor}}(s,n)$, then $M$ has either a $U_{s,2s}$-minor or a rank-$n$ projective geometry minor. 
\end{lemma}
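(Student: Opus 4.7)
The plan is to combine Lemma~\ref{3completewin} with Theorem~\ref{pgdensity} directly, as the parenthetical remark preceding the statement suggests. The two results have been engineered precisely to compose: Lemma~\ref{3completewin} lets us extract, from a huge $3$-complete matroid, a minor whose $(s-1)$-covering number grows polynomially in its rank with any prescribed exponent, and Theorem~\ref{pgdensity} converts such polynomial density growth into either a $U_{s,2s}$-minor or a rank-$n$ projective geometry minor.

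Concretely, I would set $p = f_{\ref{pgdensity}}(s,n)$ and define
\[
f_{\ref{3completewincor}}(s,n) = (16 s^2 p^4)^{(2p)^{2p}}.
\]
Given a $3$-complete matroid $M$ with $r(M) \ge f_{\ref{3completewincor}}(s,n)$, Lemma~\ref{3completewin} produces a minor $N$ of $M$ with $r(N) > 1$ and $\tau_{s-1}(N) \ge r(N)^{p} = r(N)^{f_{\ref{pgdensity}}(s,n)}$. Applying Theorem~\ref{pgdensity} to $N$ yields either a $U_{s,2s}$-minor or a rank-$n$ projective geometry minor of $N$, and hence of $M$, as required.

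There is essentially no obstacle, since both ingredients are already proved and their hypotheses match exactly after the substitution $p = f_{\ref{pgdensity}}(s,n)$. The only thing worth double-checking is that the rank bound $r(M) \ge (16s^2 p^4)^{(2p)^{2p}}$ required by Lemma~\ref{3completewin} is valid for this value of $p$, and that $p \ge 2$ (which can be arranged by replacing $f_{\ref{pgdensity}}(s,n)$ with $\max(f_{\ref{pgdensity}}(s,n),2)$ if necessary). No further structural argument about $3$-complete matroids or projective geometries is needed beyond what is packaged in the two quoted results.
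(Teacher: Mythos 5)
Your proposal is correct and is exactly the paper's argument: the paper derives Lemma~\ref{3completewincor} by applying Lemma~\ref{3completewin} with $p = f_{\ref{pgdensity}}(s,n)$ and then invoking Theorem~\ref{pgdensity} on the resulting minor $N$. Your care about ensuring $p \ge 2$ and checking the rank bound is a reasonable (if routine) extra precaution that the paper leaves implicit.
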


\section{Lifts and Projections}

Let $M$ and $N$ be matroids on a common ground set $E$. 
We let $\dist(M,N)$ denote the minimum $d \ge 0$ such that there is a sequence $M=M_0,\dotsc,M_d =N$ of matroids for which each $M_i$ is an elementary lift or projection of $M_{i-1}$. By duality of projections/lifts we have $\dist(M,N) = \dist(N,M)$, and since the rank zero matroid on $E$ can be obtained from $M$ by a sequence of $r(M)$ elementary projections and likewise for $N$, we have $\dist(M,N) \le r(M) + r(N)$ and thus $\dist(\cdot)$ is always finite. For convenience we set $\dist(M,N) = \infty$ if $E(M) \ne E(N)$. 

For $C \subseteq E(M)$, we write $M \dcon C$ to denote the matroid $(M \con C) \oplus O_C$, where $O_C$ is the rank-zero matroid with ground set $C$. Since $M \dcon C$ is also obtained by extending $M$ by a parallel copy of $C$ and then contracting the new elements, we have $\dist(M,M \dcon C) \le |C|$. We use this fact to bound the distance between two matroids with a common minor; we use the following lemma freely.
\begin{lemma}
	If $M$ and $N$ are matroids and $X$ is a set so that $M \con X = N \con X$ or $M \del X = N \del X$, then $\dist(M,N) \le 2|X|$. 
\end{lemma}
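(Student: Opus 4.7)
The plan is to exploit the double-contraction construction $M \dcon C = (M \con C) \oplus O_C$ introduced just before the lemma, using it as a canonical ``meeting point'' between $M$ and $N$. First observe that $\dist$ satisfies the triangle inequality, since concatenating a sequence of elementary lifts/projections from $M$ to $M'$ with one from $M'$ to $N$ yields a sequence from $M$ to $N$.

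For the contraction case, suppose $M \con X = N \con X$. Then
\[ M \dcon X = (M \con X) \oplus O_X = (N \con X) \oplus O_X = N \dcon X. \]
The discussion preceding the lemma records that $\dist(M, M \dcon X) \le |X|$ (realize $M \dcon X$ by adding a parallel copy of each element of $X$ and then contracting the new elements, giving $|X|$ extensions followed by $|X|$ contractions --- in fact an extension followed by a contraction is a single elementary projection, so this costs $|X|$ elementary projections). Likewise $\dist(N, N \dcon X) \le |X|$. The triangle inequality now gives
\[ \dist(M,N) \;\le\; \dist(M, M \dcon X) + \dist(N \dcon X, N) \;\le\; 2|X|. \]

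The deletion case reduces to the contraction case by duality. The dual of an elementary projection (an extension followed by contraction) is an elementary lift (a coextension followed by deletion), and vice versa, so $\dist(M, N) = \dist(M^*, N^*)$. If $M \del X = N \del X$, then $M^* \con X = N^* \con X$, and the contraction case applied to $M^*$ and $N^*$ gives $\dist(M^*, N^*) \le 2|X|$, hence $\dist(M,N) \le 2|X|$.

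There is no real obstacle here --- everything has been set up in the paragraph preceding the lemma. The only substantive point is recognizing that $M \dcon X$ depends on $M$ only through $M \con X$, so two matroids that agree after contracting $X$ share a common $\dcon X$, each within distance $|X|$ of the original.
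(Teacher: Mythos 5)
Your proof is correct and follows exactly the paper's argument: both use $M \dcon X = N \dcon X$ as the common meeting point, apply the bound $\dist(M, M\dcon X) \le |X|$ from the preceding paragraph together with the triangle inequality, and dispatch the deletion case by duality.
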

\begin{proof}
	By duality, it suffices to show this for contraction. If $M \con X = N \con X$ then $M \dcon X = N \dcon X$, so  
	$\dist(M,N) \le \dist(M,M \dcon X) + \dist(N \dcon X,N) \le 2|X|.$
\end{proof}

We now consider an operation that turns various elements of a matroid into loops, and moves other elements into parallel with existing elements. Let $M$ be a matroid and let $\phi \notin E(M)$. Let $X \subseteq E(M)$ and let $\psi \colon X \to E(M \del X) \cup \{\phi\}$ be a function. We let $\psi(M)$ denote the matroid with ground set $E(M)$ so that 
\begin{itemize}
	\item $\psi(M) \del X = M \del X$, 
	\item each $x \in X$ for which $\psi(x) = \phi$ is a loop of $\psi(M)$, and 
	\item each $x \in X$ for which $\psi(x) \ne \phi$ is parallel to $\psi(x)$ in $\psi(M)$. 
\end{itemize}

Note that $\si(\psi(M)) \cong \si(\psi(M) \del X) =  \si(M \del X)$. 

Suppose that there exists $C \subseteq E(M)$ such that $\psi(x) = \phi$ for all $x \in X \cap \cl_M(C)$, and for each $x \in X - \cl_M(C)$, the elements $x$ and $\psi(x)$ are parallel in $M \con C$. In this case, we say that the function $\psi$ is a \emph{$C$-shift}, and that $\psi(M)$ is a \emph{$C$-shift} of $M$. 
\begin{lemma}
	If $\wh{M}$ is a $C$-shift of $M$, then $\dist(M,\wh{M}) \le 4|C|$.
\end{lemma}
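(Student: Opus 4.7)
The plan is to proceed by induction on $|C|$. The base case $|C|=0$ is immediate: $\cl_M(C)$ is then the set of loops of $M$, $M\con C=M$, and the shift definition forces $\psi(M)=M$, so $\dist(M,\wh{M})=0$.

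For the inductive step, fix $c\in C$. I aim to combine three bounds: $\dist(M,M\dcon c)\le 1$ (standard, by the discussion preceding the lemma), $\dist(M\dcon c,\psi(M\dcon c))\le 4(|C|-1)$ (by induction), and $\dist(\psi(M\dcon c),\wh{M})\le 1$. The key preliminary is to verify that $\psi$ is again a $(C-c)$-shift, but now of $M\dcon c$. This rests on two identities: since $c$ is a loop of $M\dcon c$, for any $S\subseteq E(M)\setminus\{c\}$ we have $\cl_{M\dcon c}(S)=\cl_M(S\cup\{c\})$, and in particular $\cl_{M\dcon c}(C-c)=\cl_M(C)$, so the vanishing set of $\psi$ is preserved; and $(M\dcon c)\con(C-c)=(M\con C)\oplus O_{\{c\}}$, so parallel classes in $M\con C$ transfer verbatim to $(M\dcon c)\con(C-c)$. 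Both defining conditions of a $(C-c)$-shift therefore hold.

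The crux is the third bound. If $c\notin X$, a direct computation unfolding the definitions shows $\psi(M\dcon c)=\wh{M}\dcon c$ (the shift operation commutes with $\dcon c$ when $c$ lies outside $X$), and the standard bound $\dist(\wh{M}\dcon c,\wh{M})\le 1$ completes the case. If $c\in X$, the situation is subtler: now $\wh{M}\dcon c=\wh{M}$, because $c$ is already a loop of $\wh{M}$, but $\psi(M\dcon c)$ and $\wh{M}$ genuinely disagree on $E(M)\setminus X$, where they restrict to $M\del(X-c)\con c$ and $M\del(X-c)\del c=M\del X$ respectively. To bridge them I would construct a matroid $L$ on $E(M)\cup\{e\}$ as follows: start with $M\del(X-c)$ on $(E(M)\setminus X)\cup\{c\}$, relabel $c$ as $e$, adjoin $c$ back as a loop, and then adjoin each $x\in X-c$ either as a loop (if $\psi(x)=\phi$) or as a parallel copy of $\psi(x)\in E(M)\setminus X$. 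Each step is a standard loop/parallel extension, so $L$ is a valid matroid, and an element-by-element verification shows $L\del e=\wh{M}$ and $L\con e=\psi(M\dcon c)$, exhibiting $\wh{M}$ as an elementary lift of $\psi(M\dcon c)$.

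Combining the three bounds gives $\dist(M,\wh{M})\le 1+4(|C|-1)+1=4|C|-2\le 4|C|$. The main obstacle is the case $c\in X$ of the third bound; the delicate point in verifying $L\con e=\psi(M\dcon c)$ is to track parallel classes carefully. In particular, for $x\in X-c$ whose target $\psi(x)$ happens to be parallel to $c$ in $M$, the element $\psi(x)$ becomes a loop of $L\con e$ (via contraction of $e$, in whose parallel class $\psi(x)$ sits) exactly when $\psi(x)$ becomes a loop of $M\del(X-c)\con c$, so $x$ inherits loop status identically in both $L\con e$ and $\psi(M\dcon c)$, keeping the two matroids equal on the nose.
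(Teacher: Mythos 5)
Your proof is correct, but it takes a genuinely different route from the paper's. The paper's argument is a short two-step factorization through a single intermediate matroid: letting $\psi'$ be the restriction of $\psi$ to $X-C$, one checks that $\psi'(M) \con C = M \con C$ (the shift only relocates elements to positions they already occupy modulo $C$) and that $\psi(M) \del C = \psi'(M) \del C$ (the two shifts differ only on $X \cap C$), whence two applications of the freely-used lemma $\dist(M,N)\le 2|Y|$ give $2|C|+2|C|=4|C|$. You instead induct on $|C|$, peeling off one element $c$ of $C$ at a time via $M \mapsto M\dcon c$, verifying that $\psi$ remains a $(C-c)$-shift of $M \dcon c$, and then bridging $\psi(M\dcon c)$ to $\wh{M}$ with an explicitly constructed single-element coextension $L$. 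Your verification of the key identities ($\cl_{M\dcon c}(C-c)=\cl_M(C)$ and $(M\dcon c)\con(C-c)=(M\con C)\oplus O_{\{c\}}$) and your case analysis on whether $c\in X$ are sound, including the careful tracking of targets $\psi(x)$ that become loops after contracting $e$; note that this edge case (an element declared ``parallel'' to a loop) is already implicit in the paper's definition of $\psi(M)$ and you resolve it in the natural way. What your approach buys is a completely explicit, self-contained construction of the perturbation sequence and a marginally better constant ($4|C|-2$); what the paper's approach buys is brevity, since it needs no induction and no bespoke lift construction, only the already-established $\dcon$ distance bound applied twice.
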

\begin{proof}
	Let $\psi\colon X \to E(M \del X) \cup \{\phi\}$ be a $C$-shift so that $\wh{M} = \psi(M)$. Let $\psi'$ be the restriction of $\psi$ to the domain $X-C$. Now $\psi'(M)$ is a $C$-shift of $M$. By the definition of a $C$-shift we have $\psi'(M) \con C = M \con C$, and clearly $\psi(M) \del C = \psi'(M) \del C$. Thus $\dist(\psi'(M),M) \le 2|C|$ and $\dist(\psi(M), \psi'(M)) \le 2|C|$, and the lemma follows.  
\end{proof}

The next theorem shows that small perturbations can not introduce arbitarily large projective geometries or balanced uniform matroids. 

\begin{theorem}\label{perturbthm}
	 Let $M$ and $N$ be matroids such that $\dist(M,N) = 1$. 
	\begin{itemize}
		\item If $s \ge 2$ and $M$ has a $U_{s2^{4s},2s2^{4s}}$-minor, then $N$ has a $U_{s,2s}$-minor.
		\item If $n \ge 3$ and $M$ has a $\PG(n-1,q)$-minor, then $N$ has a $\PG(n-3,q)$-minor. 
	\end{itemize}
\end{theorem}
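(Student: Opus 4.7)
The plan is to reduce both bullets to the case where $M$ equals the designated minor itself. If $U$ is the relevant minor with $U = M \con C \del D$, then $N \con C \del D$ is an elementary lift or projection of $U$, so it suffices to prove the claim with $M$ replaced by $U$ and $N$ by a lift or projection of $U$. Since $U_{c,2c}$ is self-dual and matroid duality exchanges lifts with projections, the first bullet reduces to showing that every elementary projection $U^+/e$ of $U_{c,2c}$ (with $c = s \cdot 2^{4s}$) has a $U_{s,2s}$-minor. The second bullet must treat elementary lifts and projections of $\PG(n-1,q)$ separately, since $\PG(n-1,q)$ is not self-dual.

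For the uniform case, an elementary projection $U' = U^+/e$ of $U = U_{c,2c}$ is parametrized by a modular cut $\mathcal{M}$ of $U$, with $r_{U'}(S) = r_U(S) - 1$ precisely when $\cl_U(S) \in \mathcal{M}$. The key structural fact is that any two $k$-flats of $U_{c,2c}$ form a modular pair whenever $k \leq c/2$, so $\mathcal{M}$ has a unique minimal flat $F^*$ as long as its minimum rank $k_0$ is at most $c/2$. In this principal case, I would split on $k_0$: when $k_0 \leq c - s + 1$, a contraction set $C \subseteq X \setminus F^*$ of size $c - 1 - s$ together with any $Y \subseteq X \setminus (C \cup F^*)$ of size $2s$ satisfies $(U' \con C)|Y \cong U_{s,2s}$, since $C \cup S$ avoids $F^*$ and is hence outside $\mathcal{M}$ for every $s$-subset $S \subseteq Y$, while every $(s+1)$-subset of $Y$ together with $C$ has size $c$, so its closure is all of $X \in \mathcal{M}$; when $k_0 \geq s + 1$, a contraction set $C' \subseteq X \setminus F^*$ of size $c - k_0$ reveals $U' \con C' \cong U_{k_0 - 1,\, c + k_0}$, which contains $U_{s,2s}$. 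For $c \geq 2s$ these two cases cover every $k_0 \in \{1,\dotsc,c\}$, and $k_0 = 0$ gives $U' = U$. For non-principal $\mathcal{M}$ (possible only when $k_0 > c/2$), the plan is to iterate a Ramsey-style reduction: color each $s$-subset $S$ of $X \setminus C$ by whether $C \cup S \in \mathcal{M}$; a large monochromatic $Y$ of the favourable colour completes the proof directly, while the unfavourable colour constrains $\mathcal{M}$ enough to allow a recursive call on a ``simpler'' modular cut. The constant $2^{4s}$ arises from iterating this reduction on the order of $s$ times.

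For the projective case, the modularity of $\PG(n-1,q)$ forces any modular cut $\mathcal{M}$ to have a unique minimal flat $F^*$ of some rank $k$. When $k \leq n - 1$, a hyperplane $H$ of $\PG(n-1,q)$ not containing $F^*$ satisfies $\cl_U(S) \subseteq H$ and hence $\cl_U(S) \notin \mathcal{M}$ for every $S \subseteq H$, so $U^+/e|H = U|H = \PG(n-2,q)$; when $k = n$, the matroid $U^+/e$ is the truncation $T(U)$, and $T(U)|H = U|H = \PG(n-2,q)$ for any hyperplane $H$. In either case $U^+/e$ has a $\PG(n-2,q)$-restriction, which immediately contains a $\PG(n-3,q)$-minor. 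Elementary lifts of $\PG(n-1,q)$ are handled by dualising: $(U^- \del f)^*$ is an elementary projection of $\PG(n-1,q)^*$, and a parallel structural analysis (restricting to an appropriate co-hyperplane, or passing through a suitable truncation) produces a minor of $U^- \del f$ isomorphic to $\PG(n-3,q)$.

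The main obstacle lies in the uniform case when $\mathcal{M}$ is not principal; carefully iterating the Ramsey-style reduction while maintaining enough rank to continue is technical and is the source of the factor $2^{4s}$.
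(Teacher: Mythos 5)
Your reduction to the case where $M$ is the designated minor itself is sound and matches the paper's first step, and your analysis of genuinely principal modular cuts of $U_{c,2c}$ and of elementary projections of $\PG(n-1,q)$ is correct. But the two hardest parts have real gaps. First, in the uniform case your dichotomy is wrong: uniqueness of the minimal flat among flats of minimum rank $k_0\le c/2$ does not make $\mathcal{M}$ principal, because $\mathcal{M}$ may contain inclusion-minimal flats of larger rank that do not contain $F^*$. Concretely, in $U_{4,8}$ the up-closure of $\{\{1,2\},\{3,4,5\}\}$ is a non-principal modular cut with $k_0=2=c/2$ (the two generators are not a modular pair since $r(\{1,2\})+r(\{3,4,5\})=5>4=r(\{1,\dotsc,5\})+r(\emptyset)$, so closure under modular intersections imposes nothing). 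For such cuts your flat $C\cup S$ can lie in $\mathcal{M}$ despite avoiding $F^*$, so $(U'\con C)|Y$ need not be uniform. The truly non-principal case is then only a sketch (``Ramsey-style reduction'', ``recursive call on a simpler modular cut'') with no argument, and you flag it yourself as the main obstacle. The paper avoids modular cuts entirely: it first passes to a $U_{s+1,\,s2^{4s}}$-minor of rank only $s+1$; after the projection the rank is still between $s$ and $s+1$ while $\tau_{s-1}\ge 2^{4s}>\binom{2s}{s-1}^2$, so Lemma~\ref{udensity} applies with exponent at most $2$. Shrinking the rank of the uniform minor \emph{before} projecting is the trick you are missing.

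Second, the elementary-lift case for $\PG(n-1,q)$ --- the half that actually costs the two ranks in the statement --- is not proved. Dualizing converts it into analyzing a modular cut of $\PG(n-1,q)^*$, which is not a modular matroid, so no ``parallel structural analysis'' is available off the shelf; nothing from your projection argument transfers. The paper argues directly on the coextension $M_0$ of $G\cong\PG(n-1,q)$ by $e$: either some line $L$ of $G$ spans $e$ in $M_0$, in which case $M_0\del e\con L=G\con L$ and $\si(G\con L)\cong\PG(n-3,q)$; or every line of $G$ is skew to $\{e\}$, in which case $\cl_{M_0}(B)$ (for $B$ a basis of $G$) is closed under lines of $G$ and hence contains $E(G)$, so $G$ itself survives in the lift. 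You need an argument of this kind; without it the second bullet is only half proved.
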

\begin{proof}
	Let $\wh{M}$ be a matroid so that $\{\wh{M} \con e,\wh{M} \del e\} = \{M,N\}$ for some $e \in E(\wh{M})$, so $N$ is an elementary projection of $M$ if and only if $M = \wh{M} \del e$ and $N = \wh{M} \con e$. 
	
	Suppose that $M$ has a $U_{s2^{4s},2s 2^{4s}}$-minor. If $N$ is an elementary projection of $M$, then note that $M$ has a $U_{s+1,s2^{4s}}$-minor $M \con C \del D$. Let $M_0 = \wh{M} \con C \del D$. Note that $M_0 \con e$ is a minor of $N$ and that $s \le r(M_0 \con e) \le r(M_0 \del e) = s+1$. But we also have $M_0 \del e \cong U_{s+1,s2^{4s}}$, so 
	\[ \tau_{s-1}(M_0 \con e) \ge \tau_{s}(M_0) \ge s^{-1}(s2^{4s}) > \tbinom{2s}{s-1}^2 \ge \tbinom{2s}{s-1}^{r(M_0 \con e)-s},\] and so $M_0 \con e$ has a $U_{s,2s}$-minor by Lemma~\ref{udensity}. Thus, $N$ has a $U_{s,2s}$-minor. If $N$ is an elementary lift of $M$, then $N$ has a $U_{s,2s}$-minor by duality. 
	
	Suppose that $M$ has a $\PG(n-1,q)$-minor $G = M \con C \del D$, and as before let $M_0 = \wh{M} \con C \del D$. If $N$ is an elementary projection of $M$ then $M_0 \del e = G \cong \PG(n-1,q)$, so there is a $\PG(n-2,q)$-restriction $R$ of $M_0 \del e$ that does not span $e$ in $M_0$. Thus $(M_0 \con e)|E(R) \cong \PG(n-2,q)$ and so $N$ has a $\PG(n-2,q)$-minor. Suppose that $N$ is an elementary lift of $M$. If there is some line $L$ of $G$ so that $e \in \cl_{M_0}(L)$, then $M_0 \del \{e\} \con L = M_0 \con (L \cup \{e\}) = G \con L$ and $\si(G \con L) \cong \PG(n-3,q)$, so $N $ has a $\PG(n-3,q)$-minor. If every line $L$ of $G$ is skew to $\{e\}$ in $M_0$, then let $B$ be a basis of $G$. Given $x_1,x_2 \in \cl_{M_0}(B)$, the line $\cl_G(\{x_1,x_2\})$ is skew to $\{e\}$ in $M_0$, so $\cl_G(\{x_1,x_2\}) \subseteq \cl_{M_0}(\{x_1,x_2\})$. Thus, $\cl_{M_0}(B)$ contains $B$ and is closed under taking lines of $G$ through two points; since $G \cong \PG(n-1,q)$ it follows that $\cl_{M_0}(B) \supseteq E(G)$. Moreover, $B$ is skew to $\{e\}$ in $M_0$ and so  $M_0|\cl_{M_0}(B) = (M_0 \con e)|\cl_{M_0}(B)  = G$; thus, $N$ has a $\PG(n-1,q)$-minor. 
\end{proof}

\section{Spanning Cliques}

In this section we prove Theorem~\ref{main1}. First we establish a certificate $X$ that guarantees a large projective geometry or balanced uniform minor given a spanning clique. In the representable case where the $B$ is an identity matrix, the set $X$ in the lemma below can be thought of as a set of $m$ columns containing three disjoint $m \times m$ nonsingular matrices. 

\begin{lemma}\label{threenonsingular}
	There is a function $f_{\ref{threenonsingular}}\colon \bZ_2 \to \bZ$ so that for all integers $s,n$ with $s,n \ge 2$, if $m \ge f_{\ref{threenonsingular}}(s,n)$ is an integer, $M$ is a matroid with a spanning $B$-clique restriction, and $B_1,B_2,B_3,X$ are disjoint $m$-element subsets of $E(M)$ with $B_i \subseteq B$, $X \subseteq E-B$, and $r_{M \con (B-B_i)}(X) = m$ for each $i \in \{1,2,3\}$, then $M$ has a $U_{s,2s}$-minor or a rank-$n$ projective geometry minor. 
\end{lemma}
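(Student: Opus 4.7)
The plan is to produce a $3$-complete minor of $M$ of sufficient rank, then apply Lemma~\ref{upgradecomplete} (to bootstrap to high completeness) and Lemma~\ref{3completewincor} (to obtain the desired $U_{s,2s}$- or projective-geometry minor). To reach $3$-completeness, I would invoke Lemma~\ref{buildcomplete} with $a = 2$, which asks for a $2$-complete matroid together with many pairwise-disjoint $3$-subsets of the joint-set, each lying in a $4$-element circuit.

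First I would contract $B_0 := B - B_1 - B_2 - B_3$ to obtain $M^* := M \con B_0$: a matroid of rank $3m$ with basis $B^* := B_1 \cup B_2 \cup B_3$, a spanning $B^*$-clique restriction inherited from $M$, and still satisfying $r_{M^*/(B^* - B_i)}(X) = m$ for each $i$. Since $B_i$ and $X$ are both bases of the rank-$m$ matroid $M^*/(B^* - B_i)$, strong basis exchange provides a bijection $\pi_i : X \to B_i$ with $\pi_i(x)$ in the non-$x$ part $C_x^* := C_x \cap B^*$ of the fundamental circuit of $x$ with respect to $B^*$ in $M^*$. The triples $T_x := \{\pi_1(x), \pi_2(x), \pi_3(x)\}$ are then pairwise disjoint $3$-subsets of $B^*$, each contained in $C_x^*$.

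If $C_x^* = T_x$ for many $x$, then $T_x \cup \{x\}$ is already a $4$-circuit of $M^*$ and Lemma~\ref{buildcomplete} applies. In general $C_x^*$ may be strictly larger than $T_x$, and the task is to pass to a minor where equality holds for a sufficiently large sub-collection. For each $i \in \{1,2,3\}$, apply Theorem~\ref{selfdual} to $(M \con (B - B_i))|(B_i \cup X)$, whose ground set admits the disjoint bases $B_i, X$ of size $m$: either we obtain a $U_{s,2s}$-minor (ending the proof), or we extract a $tU_{1,2}$-minor with ground set $B_i^t \cup X_i^t$. Unpacking the parallel structure of this minor shows that $C_x \cap B_i^t = \{b^i(x)\}$ for each $x \in X_i^t$. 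Then in the minor of $M$ obtained by contracting $B - B_1^t - B_2^t - B_3^t$ and retaining the clique edges within $B^{t*} := B_1^t \cup B_2^t \cup B_3^t$ together with a common sub-family $X^* \subseteq X_1^t \cap X_2^t \cap X_3^t$, the fundamental circuit of every $x \in X^*$ is precisely $T_x \cup \{x\}$.

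The hardest part is making $|X^*|$ large: we need $|X^*| \ge \binom{m'}{3}$ with $m' \ge f_{\ref{3completewincor}}(s,n)$, yet naive pigeonhole for three sets of size $t$ inside an $X$ of size $m$ only yields $|X^*| \ge 3t - 2m$, which can be negative because Theorem~\ref{selfdual}'s tower bound forces $t \ll m$. To sidestep this, I would apply Theorem~\ref{selfdual} in a nested fashion rather than in parallel: first extract the $tU_{1,2}$-structure for $B_1$ to determine $X_1^t$, then apply Theorem~\ref{selfdual} inside a minor that retains the $B_1^t$--$X_1^t$ pairing (using clique edges to preserve earlier pairings) and has $B_2$ together with a refinement of $X_1^t$ as disjoint bases, and similarly for $B_3$. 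The function $f_{\ref{threenonsingular}}(s,n)$ must therefore absorb three stacked instances of Theorem~\ref{selfdual}'s tower, the cube-root loss from Lemma~\ref{buildcomplete}, and the threshold $f_{\ref{3completewincor}}(s,n)$. Once $|X^*|$ is large enough, Lemma~\ref{buildcomplete} produces a $3$-complete minor of rank $m'$; iterating Lemma~\ref{upgradecomplete} gives $a$-complete minors for growing $a$; and Lemma~\ref{3completewincor} finishes.
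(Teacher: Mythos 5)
Your proposal is correct and follows essentially the same route as the paper: the paper's proof is exactly three nested (not parallel) applications of Theorem~\ref{selfdual}, each preserving the previously established $U_{1,2}$-pairings while shrinking $X$ and the $B_i$, followed by Lemma~\ref{buildcomplete} with $a=2$ and Lemma~\ref{3completewincor}. You correctly identified the central difficulty (that parallel applications lose too much) and the resulting triply-stacked tower in the bound; the only cosmetic difference is that the paper preserves earlier pairings simply by discarding the partners of removed elements rather than via clique edges.
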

\begin{proof}
	 Given $s,n \ge 2$, let $n_0 = f_{\ref{3completewincor}}(s,n)$. Let $m_3 = n_0 + \binom{n_0}{3}$, and for each $k \in \{0,1,2\}$, recursively set $m_k = 4^{s(s4^s)^{m_{k+1}}}$. Set $f_{\ref{threenonsingular}}(s,t) = m_0$. 
	 
	Let $m \ge f_{\ref{threenonsingular}}(s,t)$ and let $M$ be a $2$-complete matroid with joint-set $B$. Let $B_1,B_2,B_3 \subseteq B$ and $X \subseteq E-B$ be disjoint $m$-element subsets so that $r_{M \con (B-B_i)}(X) = m$ for each $i \in \{1,2,3\}$. Assume that $M$ has no $U_{s,2s}$-minor; we will show that $M$ has a rank-$n$ projective geometry minor. We may assume by contracting $B-(B_1 \cup B_2 \cup B_3)$ that $(B_1,B_2,B_3)$ is a partition of $B$. 
\begin{claim}
There exists an integer $m' \ge m_3$, a rank-$3m'$ contraction-minor $M'$ of $M$, a basis $B' \subseteq B$ of $M'$, a partition $(B_1',B_2',B_3')$ of $B'$, and a set $X' \subseteq X$ for which $M' \con (B'-B'_i) | (B'_i \cup X') \cong m'U_{1,2}$ for each $i \in \{1,2,3\}$. 
\end{claim}
\begin{proof}[Proof of claim:]
	We show for each $k \in \{0,1,2,3\}$ that $M$ has a contraction-minor $M^k$ with basis $B^k \subseteq B$ such that there is a partition $(B^k_1,B^k_2,B^k_3)$ of $B^k$ and a set $X^k \subseteq X$, so that $|B^k_i| = |X^k| \ge m_k$ for each $k$, so that $X^k$ is independent in $M \con (B^k-B^k_j)$ for all $j \in \{k+1,\dotsc,3\}$, and so that $M^k \con (B^k-B^k_j)|(B^k_j \cup X^k) \cong m_kU_{1,2}$ for all $j \in \{1, \dotsc, k\}$. This clearly holds for $k = 0$, and setting $k = 3$ will give the claim. The argument essentially consists of three aplications of Theorem~\ref{selfdual}.
	
	Suppose that these $M^k,B^k,B^k_i$ and $X^k$ exist for some $k \in \{0,1,2\}$. Consider the matroid $P = M^k \con (B^k - B^k_{k+1}) | (B^k_{k+1} \cup X^k)$. The sets $B^k_{k+1}$ and $X^k$ are disjoint bases of $P$, and $r(P) \ge m_k = 4^{(s4^s)^{m_{k+1}}}$ so by Theorem~\ref{selfdual} there are disjoint sets $C,D \subseteq E(P)$ such that $P \con C \del D \cong m_{k+1}U_{1,2}$ and $P \con C \del D$ has both $B^k_{k+1}-C \cup D$ and $X^k - C \cup D$ as bases. Choose $C$ independent and $D$ co-independent in $P$, so $|C| = |D| =  m_k - m_{k+1}$. Let $X^{k+1} = X^k - (C \cup D)$ and $B^{k+1}_{k+1} = B^k_{k+1} - (C \cup D)$. 
	
	For each $j \in \{1,2,3\}$, let $Q^j = (M^k \con C) \con (B^k - B^k_j) | ((B^k_j-D) \cup X^{k+1})$. For $j \le k$, the matroid $Q^j$ is a minor of $M^k \con (B^k - B^k_j) | (B^k_j \cup X^{k}) \cong m_kU_{1,2}$ obtained by removing $(m_k - m_{k+1})$ elements of $X^k$. If $B^{k+1}_j$ is defined to be the set of the $m_{k+1}$ elements of $B^k_j$ that are not parallel to any of these removed elements, then we have $Q^j \con (B^k_j - B^{k+1}_j) \cong m_{k+1}U_{1,2}$. For $j = k+1$ we have $Q^j \cong m_{k+1}U_{1,2}$ by definition. For $j > k+1$, we have $C \subseteq (B^k-B^k_j) \cup (X^k-X^{k+1})$, and $X^k$ is independent in $M^k \con (B^k - B^k_j)$, so $X_{k+1}$ is independent in $Q^j$. Thus, there is an $m_{k+1}$-element set $B^{k+1}_j \subseteq B^k_j$ for which both $B^{k+1}_j$ and $X^{k+1}$ are independent in $Q^j \con (B^k_j - B^{k+1}_j)$. Now setting $B^{k+1} = \cup_{i=1}^3 B^{k+1}_i$ and $M^{k+1} = (M^k \con C) \con (B^k - B^{k+1})$, together with the $B^{k+1}_j$ and $X^{k+1}$ as defined, gives the claim.
\end{proof}

Note, since $M'$ is a contraction-minor of $M$ and $B' \subseteq B$, that the matroid $M'$ is $2$-complete with joint-set $B'$. Now, for each $i \in \{1,2,3\}$ we have $M' \con (B'-B_i')|(B_i' \cup X') \cong m'U_{1,2}$ and $B_i'$ is a basis of $M \con (B-B_i')$, so for each $x \in X'$, there exists a unique $b_i(x) \in B_i'$ such that $\{x,b_i(x)\}$ is a parallel pair of $M \con (B'-B_i')$. Let $B(x) = \{b_1(x),b_2(x),b_3(x)\}$.  It follows for each $x \in X'$ that $B(x) \cup \{x\}$ is a circuit of $M'$. Moreover, since $b_i(x) \ne b_i(y)$ for $x \ne y$, the sets in $\cB = \{B(x): x \in X'\}$ are pairwise disjoint. Let $h = \binom{n_0}{3}$. Since $r(M') = 3m' \ge n_0 + 3h$, there exist $x_1, \dotsc, x_h \in X'$ and a set $B_0 \subseteq B'$ with $|B_0| = n_0$ such that $B(x_1), \dotsc, B(x_h)$ are disjoint from $B_0$. By Lemma~\ref{buildcomplete} with $a = 2$, the matroid $M'$ has a $3$-complete minor of rank $n_0$. The lemma now follows from the definition of $n_0$ and Lemma~\ref{3completewincor}. 
\end{proof}

We now prove a result that will imply Theorem~\ref{main1}.

\begin{theorem}\label{maintech}
	Let $s,n \ge 2$ be integers and let $h =  f_{\ref{threenonsingular}}(s,n)$. If $M$ is a matroid with a spanning $B$-clique restriction having no $U_{s,2s}$-minor and no rank-$n$ projective geometry minor, then there is a set $\wh{B} \subseteq B$ and a $\wh{B}$-clique $\wh{M}$ such that  $\dist(M,\wh{M}) \le 7 h$. Moreover, there are disjoint sets $C_1,C_2 \subseteq E(M)$ satisfying $|C_1| \le 3h$ and  $|C_2| \le h$ such that $\wh{M}$ is a $C_2$-shift of $M \dcon C_1$.  
\end{theorem}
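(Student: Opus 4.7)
The plan is to prove Theorem~\ref{maintech} by a greedy argument controlled by Lemma~\ref{threenonsingular}. For each $y \in E(M) - B$, let $I(y) \subseteq B$ be the unique minimal subset with $y \in \cl_M(I(y))$, and call $y$ \emph{bad} if $|I(y)| \ge 3$; let $Y$ denote the set of bad elements. Since a $\wh{B}$-clique requires $|I(y)| \le 2$ for every non-basis element, the set $Y$ is exactly the obstacle I need to ``fix'' by the perturbation $(C_1, C_2)$.

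First I would run a greedy selection, iteratively picking $y_j \in Y$ with $|I(y_j) - C_1^{(j-1)}| = 3$ (where $C_1^{(j-1)}$ is the accumulated contraction set), and assigning the three residual elements $b_j^1, b_j^2, b_j^3$ to labelled parts $B_1, B_2, B_3$ inside $C_1$. I claim that for $X = \{y_j\}$ and $B_i = \{b_j^i : j\}$ the rank condition $r_{M\con (B-B_i)}(X) = |X|$ of Lemma~\ref{threenonsingular} holds: by induction on $j$, if $y_j \in \cl_{M \con (B-B_i)}(\{y_1, \ldots, y_{j-1}\})$, then minimality of $I(y_j)$ forces some $b_j^i \in I(y_k)$ for some $k<j$, and hence $b_j^i \in I(y_k) - C_1^{(k-1)} = \{b_k^1, b_k^2, b_k^3\}$ (by the ``exactly 3 fresh'' invariant at step $k$), contradicting $b_j^i \notin C_1^{(j-1)}$. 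So if the greedy ever produces $m_A \ge h$ selections, Lemma~\ref{threenonsingular} yields a $U_{s,2s}$- or rank-$n$ PG-minor, contradicting the hypothesis; hence $m_A < h$ and $|C_1| = 3m_A < 3h$.

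Next I would set $C_2 := \{y \in Y : |I(y) - C_1| \ge 4\}$. By the greedy's termination every other bad $y \in Y - C_2$ satisfies $|I(y) - C_1| \le 2$. To bound $|C_2| \le h$, I would argue that if $|C_2| \ge h$ then either a matching of $h$ elements with pairwise-disjoint residuals $I(y) - C_1$ exists---the rank condition again holding by the same disjointness argument applied to three elements picked from each residual---triggering Lemma~\ref{threenonsingular}, or else no such matching exists and the residual $I$-sets must concentrate on a bounded-rank flat of $M \con C_1$ containing many elements, so that the covering-number density theorem (Theorem~\ref{pgdensity}) yields a PG-minor. Either outcome contradicts the hypothesis.

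Finally I would take $\wh{M}$ to be the $C_2$-shift of $M \dcon C_1$ that sends each $y \in C_2$ to $\phi$; this is a valid $C_2$-shift since $y \in C_2 \subseteq \cl_{M\dcon C_1}(C_2)$. Then $\wh{M}$ has $\wh{B} := B - C_1$ as a basis: every non-basis element is either a loop (elements of $C_1$ and $C_2$, together with the Type A picks $y_j$, which become loops in $M \con C_1$ because $I(y_j) \subseteq C_1^{(j)} \subseteq C_1$) or is framed by $\le 2$ elements of $\wh{B}$ (the remaining bad $y$ with $|I(y) - C_1| \le 2$); the triangles for pairs in $\wh{B}$ are inherited from $M$'s $B$-clique. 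So $\wh{M}$ is a $\wh{B}$-clique, and $\dist(M, \wh{M}) \le |C_1| + 4|C_2| \le 3h + 4h = 7h$. The hard part will be the density step bounding $|C_2|$: ensuring that bad elements with large $|I(y) - C_1|$ cannot accumulate beyond $h$ requires combining the matching argument with a careful density argument via Theorem~\ref{pgdensity}, which is the technical core of the proof.
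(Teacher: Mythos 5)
Your first stage is sound: the greedy construction of $C_1$ does terminate with $|C_1|<3h$, since $h$ successful picks would give disjoint sets $B_1,B_2,B_3,X$ satisfying the hypotheses of Lemma~\ref{threenonsingular} (for each $k<j$ one has $y_k\in\cl_{M\con(B-B_i)}(\{b_1^i,\dotsc,b_k^i\})$ while $b_j^i\in I(y_j)$ forces $y_j\notin\cl_{M\con(B-B_i)}(\{b_1^i,\dotsc,b_{j-1}^i\})$, so $X$ is independent in each $M\con(B-B_i)$). The final assembly and the accounting $|C_1|+4|C_2|\le 7h$ also fit the required format. The gap is exactly the step you flag as ``the hard part'': the bound $|C_2|\le h$ is not just hard, it is false. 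Take $M$ to be the binary matroid whose columns are $I_r$ (the set $B$), all weight-two vectors (a $B$-clique), and the $\lfloor r/2\rfloor-1$ vectors $e_1+e_2+e_{2i+1}+e_{2i+2}$. Every added column $y$ has $|I(y)|=4$, so your greedy never fires, $C_1=\emptyset$, and $|C_2|=\lfloor r/2\rfloor-1$ is unbounded; yet $M$ is binary (hence has no $U_{2,4}$-minor) and is a distance-$2$ perturbation of a graphic matroid (project through the clique element $e_1+e_2$), so by Theorem~\ref{perturbthm} it has no $\PG(9,q)$-minor for any $q$. Thus the hypotheses hold with $(s,n)=(2,10)$ while $|C_2|\to\infty$. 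Your fallback dichotomy also fails on this example: the residuals pairwise intersect (all contain $\{e_1,e_2\}$), so there is no disjoint matching of size $2$, yet they do not concentrate on a bounded-rank flat --- a sunflower with a two-element core and petals spread across all of $B$ defeats both branches.

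The conceptual issue is that you try to repair each remaining bad element individually by making it a loop, whereas the theorem only promises a bounded perturbation, and a single contraction can repair unboundedly many elements simultaneously: in the example, contracting $e_1+e_2$ turns every bad column into a weight-two column at once. The paper's proof is built around this. It takes $X$ maximal such that \emph{some} disjoint triple $(B_1,B_2,B_3)$ works (maximality over all choices, not a greedy run), sets $\wh B=B-(B_1\cup B_2\cup B_3)$, and classifies each remaining element $f$ not by $I(f)$ in $M$ but by the fundamental circuits $H_i(f)$ taken in the three \emph{contractions} $M\con(X\cup(\bar B-B_i))$. Any system of distinct representatives $b_i\in H_i(f)$ would let one enlarge $X$ to $X\cup\{f\}$ and each $B_i$ to $B_i\cup\{b_i\}$, contradicting maximality; so by Hall's theorem either some $|H_i(f)|\le 1$ (and $f$ becomes a loop or a parallel element) or $|H_1(f)|=2$ (and $f$ becomes a genuine clique element after contracting $C_1=B_2\cup B_3\cup X$). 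That last class --- the paper's $W_2$, which can be unboundedly large and is absorbed into the new clique rather than killed --- is precisely the set your argument cannot handle.
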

\begin{proof}
	Let $E = E(M)$ and let $X \subseteq E$ be maximal so that there exist disjoint sets $B_1,B_2,B_3 \subseteq B$ for which $|B_i| = |X|$ and $X$ is independent in $M \con (B-B_i)$ for each $i \in \{1,2,3\}$. Since $M$ is $2$-complete with joint-set $B$, Lemma~\ref{threenonsingular} gives $|X| \le h$. Let $\bar{B} = B_1 \cup B_2 \cup B_3$ and $\wh{B} = B - \bar{B}$;  note that $\wh{B}$ is a basis of $M \con (X \cup (\bar{B} - B_i))$ for all $i,j \in \{1,2,3\}$. For each $f \in (E(M) - B \cup X)$, let $H_i(f) \subseteq \wh{B}$ be such that $H_i(f) \cup \{f\}$ is the fundamental circuit of $f$ with respect to the basis $\wh{B}$ of $M \con (X \cup (\bar{B}-B_i))$.  
	\begin{claim}
		There is a partition $(W_0,W_1,W_2)$ of $E - (B \cup X)$ so that 
		\begin{itemize}
			\item $W_0 \subseteq \cl_M(\bar{B} \cup X)$, 
			\item each $f \in W_1$ is parallel to some element of $\wh{B}$ in $M \con (\bar{B} \cup X)$, and
			\item $M \con (B_2 \cup B_3 \cup X) | (\wh{B} \cup W_2)$ is a $\wh{B}$-clique.		
		\end{itemize}
	\end{claim}
	\begin{proof}[Proof of claim:]
		Let $f \in E(M) - (B \cup X)$. Note that if $\hat{b} \in H_i(f)$ then the set $X \cup \{f\}$ is independent in $M \con (B - (B_i \cup \{\hat{b}\}))$. Thus, if $b_1,b_2,b_3 \in \wh{B}$ are distinct and $b_i \in H_i(f)$ for each $i$, then $X \cup \{f\}$ and the three sets $B_i \cup \{b_i\}$ yield a contradiction to the maximality of $X$. Therefore the bipartite graph with bipartition $(\{1,2,3\},\wh{B})$ and edge-set $\{(i,\wh{B})\colon \wh{B} \in H_i(f)\}$ has no three-edge matching. By Hall's theorem, we either have $|H_i(f)| \le 1$ for some $i$, or $|H_1(f)| = 2$.
		
		Let $W_2 = \{f \in E-(B \cup X)\colon |H_1(f)| = 2\}$ and $M' =M \con (B_2 \cup B_3 \cup X) |(\wh{B} \cup W_2)$.  By construction, every $f \in W_2$ is spanned by the two-element set $H_1(f) \subseteq \wh{B}$ in $M'$, so $M'$ is framed by $\wh{B}$. Moreover, since $M$ is a $B$-clique, for every two-element set $I \subseteq \wh{B}$, there is a circuit $I \cup \{f\}$ of $M$; clearly $H_1(f) = I$ and so $f \in W_2 \subseteq E(M')$ and $I$ spans $f$ in $M'$. It follows that $M'$ is a $\wh{B}$-clique. 
						
		If $|H_i(f)| \le 1$ for some $i$, then $f$ is a either a loop or is parallel to some element of $\wh{B}$ in $M \con (X \cup (\bar{B}-B_i))$, so the same holds in $M \con (\bar{B} \cup X)$. Thus, we can set $f \in W_0$ or $f \in W_1$ as appropriate; the claim follows. 
	\end{proof}
	
	Let $C_1 = B_2 \cup B_3 \cup X$ and $N = M \dcon C_1$. Note that $N|(C_1 \cup \wh{B} \cup W_2)$ is a $\wh B$-clique. Define a function $\psi\colon B_1 \cup W_0 \cup W_1 \to E \cup \{\phi\}$ so that $\psi(W_0 \cup B_1) = \{\phi\}$ and so that $\psi(x) \in \wh{B}$ and $\psi(x)$ is parallel to $x$ in $N \con B_1$ for each $x \in W_1$. It is clear that $\psi$ is a $B_1$-shift in $N$. Now $\psi(N) | (C_1 \cup \wh{B} \cup W_2) = N|(C_1 \cup \wh{B} \cup W_2)$ and each $x \in B_1 \cup W_0 \cup W_1$ is a loop or is parallel to some element of $\wh{B}$, so $\psi(N)$ is a $\wh{B}$-clique. Now, since $M \del C_1 = N \del C_1$ and $\psi$ is a $B_1$-shift in $N$, we have 
	\[\dist(M,\psi(N)) \le \dist(M,N) + \dist(N,\psi(N)) \le |C_1| + 4|B_1| \le 7h.\]
	Thus $\wh{M} = \psi(N)$, together with the sets $C_1$ and $C_2 = B_1$, satisfies the theorem.
\end{proof}

To derive Theorem~\ref{main1} from the above result, we can $k(s,n) = 13h$, where $h = f_{\ref{threenonsingular}}(s,n)$, and set $N = \wh{M} \oplus J$, where $J$ is the free matroid with ground set $B - \wh{B}$. It is clear that this $N$ is framed by $B$ and we have $\dist(N,M) \le 2(|B-\wh{B}|) + \dist(\wh{M},M) \le 13h$. 

\section{Spanning Geometries}

	In this section we prove Theorem~\ref{maingeom}, a stronger version of Theorem~\ref{main2}.

	We first prove a lemma regarding a special type of matroid that is `far' from being representable over a field. Let $q$ be a prime power and $h,t \ge 0$ be integers. An \emph{$(q,h,t)$-stack} is a matroid $S$ so that there is a partition $(X_1,\dotsc,X_h)$ of $E(S)$ such that, for each $1 \le i \le h$, the matroid $S \con (X_1 \cup \dotsc \cup X_{i-1}) |X_i$ has rank at most $t$ and is not $\GF(q)$-representable. The following lemma, proved in [\ref{covering2}, Lemma 6.3], shows that a stack on top of a projective geometry yields an interesting minor.  
	
	\begin{lemma}\label{stackwin}
		There is a function $f_{\ref{stackwin}}\colon \bZ^4 \to \bZ$ so that, for every prime power $q$ and for all integers $s,n,t \ge 2$, if $M$ is a matroid with a $\PG(r(M)-1,q)$-restriction and a $(q,f_{\ref{stackwin}}(s,n,q,t),t)$-stack restriction, then $M$ has a $U_{s,2s}$-minor or a $\PG(n-1,q')$-minor for some $q' > q$. 
	\end{lemma}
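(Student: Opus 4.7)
The plan is to reduce to Theorem~\ref{exppgdensity}, which already delivers the two alternatives once $\tau_{s-1}(M) \ge q^{r(M)+f_{\ref{exppgdensity}}(s,q,n)}$. So it suffices to show that a tall-enough $(q,h,t)$-stack on top of a spanning $\PG(r(M)-1,q)$-restriction forces such a large covering number. I would proceed by induction on the stack height $h$, carrying the strengthened inductive hypothesis: for every such $M$, either $M$ has one of the desired minors, or $\tau_{s-1}(M) \ge c^{h}\,q^{r(M)-ht}$ for some constant $c = c(s,q,t) > 1$ to be determined.

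The base case $h = 1$ is where the real new content sits: a single non-$\GF(q)$-representable layer $X_1$ of rank at most $t$, placed inside a matroid with a spanning $\PG(r(M)-1,q)$-restriction, should either produce many distinct parallel classes modulo every rank-$(s-1)$ flat of $M$ (boosting $\tau_{s-1}$ by the required factor $c$), or collapse onto a low-rank subflat from which Kung-type density via Lemma~\ref{udensity} or Theorem~\ref{pgdensity} already extracts a $U_{s,2s}$- or projective-geometry minor. For the inductive step, I would contract the rank-at-most-$t$ footprint of $X_1$ inside the $\PG$; by the definition of a stack, the contraction leaves $X_2,\ldots,X_h$ as a $(q,h-1,t)$-stack, and the residual $\PG$ still contains a $\PG(r(M)-t-1,q)$-restriction. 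Since $\tau_{s-1}$ is monotone under contraction, the boost from $X_1$ survives into the smaller matroid. After $h$ rounds the covering number is at least $c^{h}\,q^{r(M)-ht}$, so choosing $h = f_{\ref{stackwin}}(s,n,q,t)$ large enough that $c^{h} \ge q^{ht + f_{\ref{exppgdensity}}(s,q,n)}$ closes the induction.

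The hard part will be establishing the per-layer multiplicative gain $c > 1$: one must show that mounting a non-$\GF(q)$-representable rank-$\le t$ matroid on top of a spanning $\PG(r-1,q)$ genuinely multiplies, rather than merely adds to, the $(s-1)$-covering number. I expect this to rest on the fact that a non-$\GF(q)$-representable matroid of bounded rank contains a fixed minor-minimal obstruction whose interaction with the surrounding projective geometry forces at least one new parallel class above each rank-$(s-1)$ flat that meets the $\PG$ in a hyperplane of the layer; turning this local witness into a uniform lower bound on $\tau_{s-1}$ via a pigeonhole argument over flats of the $\PG$ is where the real combinatorial work lies. Once that quantitative boost is pinned down, the induction and the appeal to Theorem~\ref{exppgdensity} are essentially mechanical.
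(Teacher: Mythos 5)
First, note that the paper does not prove Lemma~\ref{stackwin} at all: it is imported verbatim from [\ref{covering2}, Lemma 6.3]. So you are attempting to reprove external machinery, which is legitimate, but your argument cannot be compared to a proof in this paper --- only judged on its own terms. On those terms it has a fatal flaw, beyond the gap you yourself flag. Your entire strategy is to deduce the conclusion from Theorem~\ref{exppgdensity} by showing that the hypotheses force $\tau_{s-1}(M) \ge q^{r(M)+f_{\ref{exppgdensity}}(s,q,n)}$. This is impossible, because the hypotheses of the lemma simply do not imply a large covering number. Take $R \cong \PG(r-1,q)$ and let $M$ be obtained by adding $h$ new points, the $i$-th placed freely on a line $L_i$ of $R$, with $L_1,\dotsc,L_h$ mutually skew (possible whenever $r \ge 2h$). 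Each $L_i$ together with its new point is a $U_{2,q+2}$-restriction, and skewness makes their union a $(q,h,2)$-stack restriction of $M$; the hypotheses of the lemma hold with $t=2$ and $h$ as large as you like. Yet $\tau_{s-1}(M) \le \tau_{s-1}(R) + h \le \tfrac{q^r-1}{q^{s-1}-1} + h$, which is roughly $q^{r-s+1}$ and nowhere near $q^{r+f_{\ref{exppgdensity}}(s,q,n)}$. So Theorem~\ref{exppgdensity} can never be invoked on $M$, and the conclusion (which is genuinely true for this $M$ when $h$ is large) must be reached by a direct minor-building argument rather than a density count on $M$ itself.

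The same example kills the inductive scheme quantitatively: to close your induction you need $c^h \ge q^{ht+f}$, i.e.\ $c > q^{t}$ per layer, so already the base case $h=1$ would assert $\tau_{s-1}(M) \gtrsim q^{r}$ for a projective geometry with a single extra point on a line --- false, since that point changes $\tau_{s-1}$ by at most $1$. More generally, a single non-$\GF(q)$-representable layer can consist of only $q+2$ elements, and adding boundedly many elements can only increase $\tau_{s-1}$ additively, never multiplicatively; so the ``per-layer multiplicative gain'' you defer as ``the hard part'' is not merely unproven but unobtainable. The correct proof (in [\ref{covering2}]) instead uses the stack layers to directly construct longer lines and larger geometries inside suitable minors; your contraction-and-recurse bookkeeping (which is fine as far as it goes: $\tau_{s-1}$ is indeed non-increasing under contraction, and the residual $\PG(r-t-1,q)$-restriction survives) cannot substitute for that construction.
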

	
	The proof of the next lemma uses the fact that $U_{s,2s}$-is $\GF(q)$-representable whenever $q \ge 2s$, which holds because the Vandermonde matrix $A \in \GF(q)^{[s] \times \GF(q)}$ defined by $A_{j,\alpha} = \alpha^{j-1}$ is a $\GF(q)$-representation of $U_{s,q}$; see Section 6.5 of [\ref{oxley}] for more detail. 
\begin{lemma}\label{localrep}
	There is a function $f_{\ref{localrep}}\colon \bZ^3 \to \bZ$ so that, for all integers $s,t,n \ge 2$, if $q$ is a prime power, and $M$ is a matroid with a $\PG(r(M)-1,q)$-restriction $R$, no $U_{s,2s}$-minor, and no $\PG(n-1,q')$-minor for any $q' > q$, then there is a set $C \subseteq E(M)$ so that $|C| \le f_{\ref{localrep}}(s,n,t)$, every nonloop of $M \con C$ is parallel to an element of $R$, and every rank-$t$ restriction of $M\con C$ is $\GF(q)$-representable.
\end{lemma}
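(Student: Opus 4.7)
The approach is to construct $C$ iteratively, alternately contracting either a rank-$\le t$ non-$\GF(q)$-representable restriction of the current matroid $M \con C$, or a non-loop element of $M \con C$ that is not parallel to any element of $E(R) \setminus C$. The bound on $|C|$ will come from Lemma~\ref{stackwin} applied to the resulting $(q,\cdot,t)$-stack restriction of $M$.

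First, we reduce to the case $r(M) \ge s$ and $q < 2s$. The Vandermonde construction shows that $U_{s,q}$ is $\GF(q)$-representable, hence a restriction of $\PG(r(M)-1,q) = R$ whenever $r(M) \ge s$. If $q \ge 2s$, then $U_{s,q}$ contains $U_{s,2s}$ as a restriction, giving $M$ a $U_{s,2s}$-minor, contradicting the hypothesis. The case $r(M) < s$ is handled separately using Lemma~\ref{udensity} to bound $|E(M)|$.

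Initialize $C = \emptyset$ and iterate: (a) if $M \con C$ has a rank-$\le t$ restriction $N$ that is not $\GF(q)$-representable, take a minimal such $N$ (whose size is bounded by a function of $q$ and $t$, using finiteness of minimal excluded minors of bounded rank) and add $E(N)$ to $C$; (b) otherwise, if some non-loop $e$ of $M \con C$ is not parallel to any element of $E(R) \setminus C$, add $\{e\}$ to $C$; (c) terminate when neither applies. Both required conditions hold at termination by construction.

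The sets added in type-(a) steps form the successive levels of a $(q,\cdot,t)$-stack restriction of $M$, so Lemma~\ref{stackwin} bounds their number by $f_{\ref{stackwin}}(s,n,q,t)$. The main obstacle is bounding the type-(b) steps. For each such step, the minimal $D \subseteq E(R) \setminus C$ with $e \in \cl_{M \con C}(D)$ satisfies $|D| > t$, for otherwise $(M \con C)|(\cl_R(D) \cup \{e\})$ would be a rank-$\le t$ non-$\GF(q)$-representable restriction (a projective sub-geometry plus a point not parallel to any existing element) caught by (a). A Vandermonde-arc argument bounds $|D|$ above: contracting $|D|-s$ elements of $D$ realizes $\PG(s-1,q) + e$ (with $e$ generic) as a minor of $M$, and combining with the arc $U_{s,q+1} \subseteq \PG(s-1,q)$ yields $U_{s,q+2}$, which contains $U_{s,2s}$ when $q \ge 2s-2$ (smaller $q$ being handled analogously via the $\PG(n-1,q')$-minor hypothesis). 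Hence $|D|$ is bounded by a function of $s,n$. After a type-(b) step, the restriction of $M \con (C \cup \{e\})$ to $\cl_R(D)$ becomes a truncation of $\PG(|D|-1,q)$, a non-$\GF(q)$-representable matroid of rank $|D|-1$; when $|D|-1 \le t$, this triggers type (a) immediately, and otherwise finitely many further type-(b) steps on the truncated sub-structure eventually produce a rank-$\le t$ non-representable restriction. Thus type-(b) steps are bounded in tandem with type-(a) steps, yielding $|C| \le f_{\ref{localrep}}(s,n,t)$ once $q < 2s$ absorbs all $q$-dependencies.
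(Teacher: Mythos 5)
Your overall architecture (terminate when both conclusions hold; bound the non-representable layers via Lemma~\ref{stackwin}; reduce to $q<2s$ via the Vandermonde arc) matches the paper in two of its three ingredients, but the third and most important ingredient is missing: a mechanism to bound the number of elements that are not parallel to points of $R$ and not caught by a low-rank non-representable restriction. Your type-(b) count is where the proof breaks. Consider $M$ obtained from $\PG(r-1,q)$ by adding many mutually skew points, each placed freely on its own rank-$(t+2)$ flat. No such point lies in a rank-$\le t$ non-representable restriction together with points of $R$ (any $X\subseteq E(R)$ of rank at most $t$ fails to span it, so $M|(X\cup\{e\})$ is just $M|X$ plus a coloop), so step (a) never fires; and after contracting such a point, the affected flat becomes the rank-$(t+1)$ truncation of $\PG(t+1,q)$, whose every restriction of rank at most $t$ agrees with the original projective geometry and hence \emph{is} $\GF(q)$-representable. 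So contrary to your ``in tandem'' claim, arbitrarily many type-(b) steps can occur with no intervening type-(a) step, and your algorithm produces an unbounded $C$. Such a matroid does in fact contain a forbidden minor when the number of free points is large, but detecting this requires a \emph{global} density count, not a per-element argument: this is exactly what the paper's proof does. It takes $F\subseteq E(M)$ maximal skew to every rank-$(s-1)$ flat of $R$, observes that $(M\con F)|E(R)$ then has $\tau_{s-1}$ exceeding $q^{r(M\con F)+r_M(F)-s}$, and invokes Theorem~\ref{exppgdensity} to force $r_M(F)\le k_1+s$. Your proposal never uses Theorem~\ref{exppgdensity}, and I do not see how to complete a purely local argument without it.

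There are secondary problems as well. Your upper bound on $|D|$ via ``$U_{s,q+2}\supseteq U_{s,2s}$'' needs $q\ge 2s-2$, and the deferral of smaller $q$ to ``the $\PG(n-1,q')$-minor hypothesis'' is not an argument: a single point in general position on a rank-$|D|$ flat does not yield a larger projective geometry minor. Your claim that a minimal non-$\GF(q)$-representable restriction of rank at most $t$ has size bounded in $q$ and $t$ is unjustified (and unnecessary: the stack definition only requires bounded \emph{rank} per layer, and one contracts a basis of each layer, which is how the paper keeps $|C|\le k_1+s+t'k_2$). Finally, your inference that $|D|>t$ assumes $(M\con C)|\cl_R(D)$ is still a projective geometry, but contracting earlier elements of $C$ can collapse points of that flat, after which adding $e$ need not create a non-representable restriction; the paper avoids this by working with flats of $R$ that are skew to everything contracted.
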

\begin{proof}
	Let $Q$ be the set of all prime powers less than $2s$. Given integers $s,t,n \ge 1$, let $t' = \max(s,t)$. Let $k_1 = \max_{q \in Q}f_{\ref{exppgdensity}}(s,q,n)$ and let $k_2 = \max_{q \in Q}f_{\ref{stackwin}}(s,n,q,t')$. Set $f_{\ref{localrep}}(s,n,t) = k = k_1 + s + t'k_2$.
	
	Let $q$ be a prime power and let $M$ be a matroid with a $\PG(r(M)-1,q)$-restriction. Suppose that $M$ has no $U_{s,2s}$-minor and no $\PG(n-1,q')$-minor for $q' > q$. If $r(M) < s $ then the theorem clearly holds with $C$ equal to a basis for $M$, so we may assume that $r(M) \ge s$. If $q \ge 2s$ then $R$ has a $U_{s,2s}$-restriction, so we may also assume that $q \in Q$. 
	
	Let $F \subseteq E(M)$ be maximal so that $F$ is skew to every rank-$(s-1)$ flat of $R$. Since every rank-$(s-1)$ set of $(M \con F)|E(R)$ is also a rank-$(s-1)$ set of $R$ and thus contains at most $\tfrac{q^{s-1}-1}{q-1}$ elements of $|R|$, we have 
		\[ \tau_{s-1}((M \con F)|E(R)) \ge |R| / \left(\tfrac{q^{s-1}-1}{q-1}\right) > q^{r(M)-s} = q^{r(M \con F) + r_M(F) -s}.\]
		If $r_M(F) \ge k_1+s$ then, since $k_1 \ge f_{\ref{exppgdensity}}(s,q,n)$, we obtain a contradiction from Theorem~\ref{exppgdensity}. Therefore $r_M(F) < k_1 + s$. By the maximality of $F$, every $f \in E(M \con F)$ is spanned by some rank-$(s-1)$ set of $(M \con F)|E(R)$. Let $h$ be maximal so that $M \con F$ has a $(q,h,t')$-stack restriction $S$. By Lemma~\ref{stackwin}, we have $h \le k_2$ and so $r(S) \le t' k_2$. Let $M'= M \con (F \cup E(S))$. By the maximality of $h$, every restriction of $M'$ with rank at most $t'$ is $\GF(q)$-representable. In particular, every rank-$(s-1)$ restriction of $M'$ is $\GF(q)$-representable and every nonloop $f$ of $M'$ is spanned by a $\PG(s-2,q)$-restriction $P_f$ of $M'$ contained in $E(R)$, so every such $f$ is in fact parallel to some element of $P_f$, as otherwise $M'|(E(P_f) \cup \{f\})$ is not $\GF(q)$-representable. Now $r_M(F \cup E(S)) \le r_M(F) + r_{M \con F}(E(S)) \le k_1 + s + t'k_2$, so any basis $C$ for $M|(F \cup E(S))$ will satisfy the lemma. 
\end{proof}

We now state and prove a stronger version of Theorem~\ref{main2}. 

\begin{theorem}\label{maingeom}
	Let $s\ge 2$ and $n \ge 2$ be integers and let $q$ be a prime power. Let $k = f_{\ref{localrep}}(s,n,s-1)$. If $M$ is a rank-$r$ matroid with a $\PG(r-1,q)$-restriction, no $U_{s,2s}$-minor, and no $\PG(n-1,q')$-minor for any $q' > q$, then there is a matroid $\wh{M}$ such that $\si(\wh{M}) \cong \PG(r-1,q)$ and $\dist(M,\wh{M}) \le 4k$. Moreover, there is a set $C \subseteq E(M)$ for which $|C| \le k$ and $\wh{M}$ is a $C$-shift of $M$. 
\end{theorem}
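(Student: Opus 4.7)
The plan is to read $\wh M$ directly off of Lemma~\ref{localrep}. Apply that lemma with $t = s-1$ to obtain a set $C \subseteq E(M)$ with $|C| \le f_{\ref{localrep}}(s,n,s-1) = k$ such that every nonloop of $M \con C$ is parallel to some element of $E(R)$ in $M \con C$; the $\GF(q)$-representability conclusion of the lemma will not be needed here.

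Now set $X = E(M) \setminus E(R)$ and define $\psi \colon X \to E(R) \cup \{\phi\}$ by $\psi(x) = \phi$ whenever $x \in X \cap \cl_M(C)$, and otherwise by letting $\psi(x)$ be any element of $E(R)$ that is parallel to $x$ in $M \con C$. For $x \in X \setminus \cl_M(C)$ such a choice exists because $x$ is a nonloop of $M \con C$, and the chosen $\psi(x)$ must itself be a nonloop of $M \con C$, so $\psi(x) \in E(R) \setminus \cl_M(C) \subseteq E(M \del X)$, as required by the definition of a shift. Hence $\wh M := \psi(M)$ is a $C$-shift of $M$, and the lemma bounding the distance of a $C$-shift from the previous section gives $\dist(M, \wh M) \le 4|C| \le 4k$.

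To finish, observe that $\wh M \del X = M \del X = M|E(R) = R \cong \PG(r-1,q)$, while every element of $X$ is either a loop of $\wh M$ or parallel in $\wh M$ to an element of $E(R)$; therefore $\si(\wh M) \cong \si(R) \cong \PG(r-1,q)$, as desired.

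The real work is already absorbed into Lemma~\ref{localrep}, which combines Lemma~\ref{stackwin} and Theorem~\ref{exppgdensity} to control both the elements skew to a large flat of $R$ and the non-$\GF(q)$-representable ``stack'' restrictions that could survive contraction. Given that lemma, the present theorem is a short construction in which $C$ plays two roles at once: the certificate set bounding the perturbation distance, and the contraction set with respect to which $\psi$ is a shift.
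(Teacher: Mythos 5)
Your proof is correct and follows essentially the same route as the paper: apply Lemma~\ref{localrep} with $t=s-1$ to get $C$, define the $C$-shift $\psi$ sending $\cl_M(C)$-elements to $\phi$ and all other non-$E(R)$ elements to parallel mates in $E(R)$, and conclude via the $4|C|$ distance bound for shifts. The only difference is cosmetic: your verification that $\psi(x)$ lands in $E(M\del X)$ is slightly more careful than needed, since the codomain condition already follows from $\psi(x)\in E(R)=E(M)-X$.
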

\begin{proof}
	Let $R$ be a $\PG(r-1,q)$-restriction of $M$. By Lemma~\ref{localrep}, there exists a set $C \subseteq E(M)$ so that $|C| \le k$ and every nonloop of $M \con C$ is parallel to an element of $E(R)$. Let $\psi\colon E(M) - E(R) \to E(R) \cup \{\phi\}$ be defined so that $\psi(\cl_M(C) - E(R)) = \{\phi\}$ and each $x \in E(M) - (E(R) \cup \cl_M(C))$ is parallel to $\psi(x)$ in $M \con C$. Clearly $\psi$ is a $C$-shift and $\si(\psi(M)) \cong R$; the theorem follows. 
\end{proof}

	\section{Bicircular Cliques}\label{bicircsection}

	We now prove Theorem~\ref{bicirc}, first showing that a spanning framed bicircular clique restriction together with a spanning projective geometry restriction gives a large uniform minor. We implicitly use the well-known fact that $B^+(H)$ is a minor of $B^+(G)$ whenever $H$ is a minor of $G$. 
	
	\begin{lemma}\label{bicircpg}
		There is a function $f_{\ref{bicircpg}}\colon \bZ \to \bZ$ so that, for every integer $s \ge 2$, if $r \ge f_{\ref{bicircpg}}(s)$ and $M$ is a rank-$r$ matroid with a $B^+(K_r)$-restriction and a rank-$r$ projective geometry restriction, then $M$ has a $U_{s,2s}$-minor. 
	\end{lemma}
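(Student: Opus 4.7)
The plan is a case analysis on the order $q$ of the spanning projective geometry $R\cong\PG(r-1,q)$. The easy case is $q\ge 2s$: the Vandermonde matrix cited in the paper exhibits $U_{s,q}$ as a $\GF(q)$-representable restriction of $R$, and hence $U_{s,2s}$ is a restriction of $R$, yielding the desired minor. From here on I assume $q\le 2s-1$, so $q$ is bounded by a function of $s$.

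Next I would like to leverage the $3$-completeness machinery of Section~4. To do so I would first arrange that the basis $B$ of the $B^+(K_r)$-restriction is contained in $E(R)$; since any basis of $R$ is a basis of $M$, a basis-exchange/alignment argument (possibly after passing to a suitable contraction-minor and losing a constant number of coordinates) should reduce to this configuration. With $B\subseteq E(R)$, the matroid $M$ becomes $3$-complete with joint-set $B$: $2$-completeness is witnessed by the triangles $\{v_i,v_j,e_{ij}\}$ from the $B^+(K_r)$-restriction, while for any $3$-subset $\{v_a,v_b,v_c\}$ of $B$, the PG point whose $B$-coordinates are supported exactly on $\{v_a,v_b,v_c\}$ forms a $4$-circuit of $M$ with those basis elements, supplying the missing $3$-element fundamental circuits.

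Now applying Lemma~\ref{3completewincor}, provided $r$ is at least $f_{\ref{3completewincor}}(s,n)$ for a chosen $n$, either $M$ has a $U_{s,2s}$-minor and we are done, or $M$ has a rank-$n$ projective geometry minor $N\cong \PG(n-1,q')$ for some prime power $q'$. If $q'\ge 2s$, a Vandermonde inside $N$ gives $U_{s,2s}$. The main obstacle is therefore the case $q'\le 2s-1$: here the idea is to iterate the argument, since $q'$ lies in the bounded range $\{2,\ldots,2s-1\}$, by invoking Theorem~\ref{maingeom} on a well-chosen minor that retains both a projective-geometry restriction over $\GF(q')$ and a bicircular clique of comparable rank, each iteration strictly increasing the relevant field size; after at most $2s$ steps we reach a field of size $\ge 2s$ and finish via Vandermonde.

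The principal difficulty I anticipate is precisely this iteration: after one application of Lemma~\ref{3completewincor}, the rank-$n$ PG minor $N$ on its own has no bicircular structure, so one must track a ``parent'' minor of $M$ in which both structures continue to coexist and argue that the next application of the same machinery is valid. An alternative fallback I would also try is to apply Theorem~\ref{maingeom} directly to $M$, and for its ``close to $\GF(q)$-representable'' outcome derive a contradiction from the fact that $B^+(K_n)$ is not $\GF(q)$-representable for $n$ larger than a function of $q$: concretely, $B^+(K_n)$ admits a $U_{2,q+2}$-minor for $n\ge 2\sqrt{q}$ (obtained by contracting a matching of edges to pair up vertices and concentrate many edge-points on a single line), and this minor is not $\GF(q)$-representable, contradicting the representability of the approximating matroid $\wh{M}$.
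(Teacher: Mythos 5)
Your endgame is the right one and matches the paper's: handle $q\ge 2s$ by the Vandermonde construction, and for $q<2s$ derive a contradiction between local $\GF(q)$-representability and the long lines ($U_{2,q+2}$-minors) that arise from parallel edges in a contracted bicircular clique. But both of your routes to that endgame have genuine gaps. The $3$-completeness route founders on the ``alignment'' step: the joint-set of the bicircular clique is the frame $B$, and the plane spanned by three frame elements need not contain \emph{any} point of $E(R)$ when $r>3$ (a spanning $\PG(r-1,q)$-restriction does not meet every low-rank flat of $M$), while replacing $B$ by a basis of $R$ destroys the bicircular triangles. Even granting $3$-completeness, Lemma~\ref{3completewincor} hands you a rank-$n$ projective geometry minor over an uncontrolled field $q'$ (nothing forces $q'>q$) with no bicircular structure attached, so the iteration you describe cannot get started --- as you yourself note. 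The paper avoids $3$-completeness entirely here.

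The fallback is closer to the actual proof but has two concrete defects. First, Theorem~\ref{maingeom} (equivalently Lemma~\ref{localrep}) requires as a hypothesis that $M$ have no $\PG(\cdot,q')$-minor for $q'>q$; you never establish this. This is precisely where the paper's induction on field size lives: it chooses a minor $N$, \emph{maximal with respect to the field order $q$}, that still carries both a bicircular clique and a spanning $\PG(\cdot,q)$, so that the ``no larger-field PG minor'' hypothesis holds by maximality (any larger-field PG minor would again coexist with a bicircular clique in a suitable contraction, contradicting the choice). Your iteration idea needs to be attached to this coexistence, not to the output of Lemma~\ref{3completewincor}. Second, the final contradiction ``$\wh{M}$ is $\GF(q)$-representable but $M$ has a $U_{2,q+2}$-minor'' is not valid: $\wh{M}\ne M$, and a bounded perturbation can destroy a rank-$2$ minor (a single projection turns $U_{2,q+2}$ into $U_{1,q+2}$). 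The correct argument uses the set $C$ with $|C|\le k$ from Lemma~\ref{localrep}, for which every rank-$2$ restriction of $M\con C$ is $\GF(q)$-representable, and then exhibits a line of the contracted multigraph clique that is \emph{skew to $C$}; this forces you to first blow up $K_n$ to a multigraph with at least $2s$ parallel edges between each pair of $|C|+2$ vertices, exactly as the paper does.
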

	\begin{proof}
		Let $s \ge 2$ be an integer. Let $q_h$ be the smallest prime power at least $2s$, and let $q_1, \dotsc, q_h$ be the prime powers at most $q_h$ listed in increasing order. Define $r_1, \dotsc, r_h$ recursively by $r_h = s$, and $r_i = 2s(f_{\ref{localrep}}(s,2,r_{i+1}) + 2)$ for each $i \in \{1, \dotsc, s-1\}$. Set $f_{\ref{bicircpg}}(s) = \max_{1 \le i \le s}r_i$.  
		
		Let $M$ be a matroid of rank $r \ge f_{\ref{bicircpg}}(s)$ that has a $B^+(K_r)$-restriction and a rank-$r$ projective geometry restriction. Assume that $M$ has no $U_{s,2s}$-minor. Let $i \in \{1, \dotsc, h\}$ be maximal so that $M$ has a minor $N$ with rank $n \ge r_i$, so that $N$ has both a $B^+(K_n)$-restriction and a $\PG(n-1,q)$-restriction for some $q \ge q_i$. (This holds for $i = 1$, so this choice is well-defined.) If $q \ge 2s$ then, since $r_i \ge s$ and $M$ has a $\PG(r_i-1,q)$ restriction, $M$ has a $U_{s,2s}$-minor.  Therefore $q < 2s$ and, in particular, $i < h$. Let $n' = f_{\ref{localrep}}(s,2,r_{i+1})+2$ and let $K$ denote the loopless graph on $n'$ vertices in which each pair of vertices is joined by exactly $2s$ distinct edges. Since $n \ge r_i \ge 2s n'$, the graph $K$ is a minor of $K_n$, so $N$ has a rank-$n'$ contraction-minor $N'$ with $B^+(K)$ as a restriction; clearly $N'$ also has a $\PG(n'-1,q)$-restriction. 
		
		By the maximality of $i$, the matroid $N'$ has no $\PG(r_{i+1}-1,q')$-minor for any $q' > q$. By Lemma~\ref{localrep}, there thus exists $C \subseteq E(N)$ so that $|C| \le n'-2$ and each rank-$2$ restriction of $N' \con C$ is $\GF(q)$-representable. As $|V(K)| \ge |C|+2$, there are distinct $v_1,v_2 \in V(K)$ so that $\{v_1,v_2\}$ is skew to $C$ in $N'$. Now $v_1$ and $v_2$ span a line of $B^+(K)$ containing $2s + 2 > q+2$ points of $N' \con C$, so $N' \con C$ has a $U_{2,q+2}$-restriction, contradicting the choice of $C$.	
	\end{proof}
	
	Finally we restate and prove Theorem~\ref{bicirc}. 
	
	\begin{theorem}\label{bicirctech}
		There is a function $f_{\ref{bicirctech}}\colon \bZ \to \bZ$ so that, for every integer $s \ge 2$, if $M$ is a matroid without a $U_{s,2s}$-minor and with a $B^+(K_{r(M)})$-restriction framed by $B$, then there is a set $\wh{B} \subseteq B$ and a $\wh{B}$-clique $\wh{M}$ such that $\dist(M,\wh{M}) \le  f_{\ref{bicirctech}}(s)$.
	\end{theorem}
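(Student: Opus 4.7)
The plan is to apply Theorem~\ref{maintech} with rank parameter $n := f_{\ref{bicircpg}}(s)$ and to rule out its projective-geometry alternative by means of Lemma~\ref{bicircpg}.  Set $f_{\ref{bicirctech}}(s) := 7 f_{\ref{threenonsingular}}(s, n)$.  Since $B^+(K_{r(M)})$ is a $B$-clique, Theorem~\ref{maintech} applied to $M$ produces either (a) a rank-$n$ projective geometry minor of $M$, or (b) a subset $\wh{B} \subseteq B$ and a $\wh{B}$-clique $\wh{M}$ with $\dist(M, \wh{M}) \le 7 f_{\ref{threenonsingular}}(s, n) = f_{\ref{bicirctech}}(s)$.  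Outcome (b) is exactly the desired conclusion, so the task reduces to deriving a contradiction in case (a).

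Suppose then that $M$ has a $\PG(n-1, q)$-minor $P = M \con C \del D$ for some prime power $q$.  By standard preprocessing---first taking $D$ to be $E(M \con C) \setminus E(P)$, then absorbing into $C$ any element of $D$ that is skew to $E(P)$ in $M \con C$, and finally replacing $C$ by a basis of itself in $M$---we may assume that $C$ is independent in $M$ with $r_M(C) = |C| = r(M) - n$ and that $P$ is a spanning restriction of $M \con C$.  Write $E_0$ for the ground set of the $B^+(K_{r(M)})$-restriction, and set $V_C := B \cap \cl_M(C)$; since $B$ is a basis of $M$, $|V_C| = r_M(\cl_M(C)) = r(M) - n$.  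The key structural observation is that $\cl_M(C) \cap E_0$ is a flat of $B^+(K_{r(M)}) = M|E_0$ of rank at most $r(M) - n$ that contains the independent set $V_C$ of the same rank, so it equals $\cl_{B^+(K_{r(M)})}(V_C)$; within the bicircular clique this closure consists precisely of $V_C$ together with the edges of $K_{r(M)}$ both of whose endpoints lie in $V_C$.

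Setting $V' := B \setminus V_C$ (so $|V'| = n$), for any distinct $v_1, v_2 \in V'$ neither endpoint lies in $V_C$, and the preceding identification therefore yields $e_{v_1 v_2} \notin \cl_M(C)$; hence $e_{v_1 v_2}$ is not contracted into $C$ and is not a loop of $M \con C$, and $\{v_1, v_2, e_{v_1 v_2}\}$ persists as a triangle of $M \con C$.  Writing $E' := \{e_{v_1 v_2} : v_1, v_2 \in V', v_1 \ne v_2\}$, the rank calculation $r_M(V' \cup E' \cup C) = r_M(V' \cup C) = r_M(V') + r_M(C) = r(M)$ shows that $V' \cup E'$ is skew to $C$ in $M$, giving $(M \con C)|(V' \cup E') = M|(V' \cup E') \cong B^+(K_n)$.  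Combined with $(M \con C)|E(P) = P \cong \PG(n-1, q)$, this exhibits both a $B^+(K_n)$-restriction and a rank-$n$ projective geometry restriction inside the rank-$n$ minor $(M \con C)|(V' \cup E' \cup E(P))$ of $M$, and since $n = f_{\ref{bicircpg}}(s)$, Lemma~\ref{bicircpg} supplies a $U_{s, 2s}$-minor, contradicting the hypothesis on $M$.

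The main obstacle is the flat-intersection argument in the second paragraph: it relies on the rigid combinatorial structure of flats in the bicircular clique---specifically that a rank-$k$ flat of $B^+(K_r)$ containing $k$ frame vertices must consist exactly of those vertices and the edges among them---to guarantee that every bicircular edge between two vertices of $V'$ survives the contraction by $C$.  This survival is exactly what is needed to simultaneously realise the bicircular clique and the projective geometry as spanning restrictions of a single rank-$n$ minor of $M$ and thereby to invoke Lemma~\ref{bicircpg}.
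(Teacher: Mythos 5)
Your overall strategy is exactly the paper's: invoke Theorem~\ref{maintech} with $n = f_{\ref{bicircpg}}(s)$, and in the projective-geometry case exhibit a rank-$n$ minor carrying both a $B^+(K_n)$-restriction and a spanning rank-$n$ projective geometry, then apply Lemma~\ref{bicircpg}. However, the step you yourself single out as the crux contains a genuine error. You set $V_C = B \cap \cl_M(C)$ and assert that $|V_C| = r_M(\cl_M(C)) = r(M)-n$ ``since $B$ is a basis of $M$''. A basis need not meet a flat in a subset spanning that flat. For instance, if $C$ consists of a single edge $e_{uv}$ of the bicircular clique, then $e_{uv}$ is freely placed on the line through $u$ and $v$, so $\cl_M(C)\cap B=\emptyset$ while $r_M(C)=1$. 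In such cases $|V'| = |B \setminus V_C| > n$, the identity $r_M(V'\cup C)=r_M(V')+r_M(C)$ fails (its left side is at most $r(M)$ while its right side exceeds $r(M)$), and the identification of $\cl_M(C)\cap E_0$ with the closure of $V_C$ in $B^+(K_{r(M)})$ collapses; you have therefore not shown that the edges among $V'$ survive the contraction, nor even that $|V'|=n$.

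The gap is local and repairable. Instead of taking $V' = B\setminus\cl_M(C)$, choose $V'\subseteq B$ to be a basis of $M\con C$, which exists because $B$ spans $M\con C$. Then $r_M(V'\cup C)=r_{M\con C}(V')+r_M(C)=|V'|+r_M(C)$, so $V'$ is skew to $C$; since $E'\subseteq\cl_M(V')$, the set $V'\cup E'$ is also skew to $C$, whence $(M\con C)|(V'\cup E') = M|(V'\cup E')\cong B^+(K_n)$ and the remainder of your argument goes through verbatim. This is essentially the (unstated) justification behind the paper's one-line assertion that $M\con C$ has a $B^+(K_{r(M\con C)})$-restriction.
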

	\begin{proof}
		For each $s\ge 2$ set $f_{\ref{bicirctech}}(s) = n = 7f_{\ref{threenonsingular}}(s,f_{\ref{bicircpg}}(s))$. Let $M$ be a matroid with a $B^+(K_{r(M)})$-restriction  framed by $B$ and with no $U_{s,2s}$-minor. By Theorem~\ref{maintech}, either $M$ has a rank-$f_{\ref{bicircpg}}(s)$ projective geometry minor $N$, or there is a set $\wh{B} \subseteq B$ and a $\wh{B}$-clique $\wh{M}$ with $\dist(M,\wh{M}) \le n$. In the second case the theorem is immediate. In the first case, let $C$ be such that $N$ is a spanning restriction of $M \con C$. Now $M \con C$ also has a $B^+(K_{r(M \con C)})$-restriction, and the result follows from Lemma~\ref{bicircpg}.
	\end{proof}

\section*{References}
\newcounter{refs}

		\begin{list}{[\arabic{refs}]}
{\usecounter{refs}\setlength{\leftmargin}{10mm}\setlength{\itemsep}{0mm}}


\item\label{highlyconnected}
J. Geelen, B. Gerards, G. Whittle,
The highly-connected matroids in minor-closed classes,
Ann. Comb. 19 (2015), 107-123.

\item\label{gkep}
J. Geelen, K. Kabell, 
The {E}rd{\H o}s-{P}\'osa property for matroid circuits,
J. Combin. Theory Ser. B 99 (2009), 407--419.  

\item\label{covering1}
J. Geelen, P. Nelson, 
Projective geometries in exponentially dense matroids. I, 
J. Combin. Theory Ser. B 113 (2015), 185--207.

\item\label{gr}
C. Godsil and G. Royle, 
Algebraic Graph Theory, 
Springer, 2001. 

\item\label{covering2}
P. Nelson, 
Projective geometries in exponentially dense matroids. II, 
J. Combin. Theory Ser. B 113 (2015), 208--219.

\item\label{margulis}
G. A. Margulis, 
Explicit constructions of graphs without short cycles and low density codes, 
Combinatorica 2 (1982), 71--78. 

\item \label{oxley}
J. G. Oxley, 
Matroid Theory,
Oxford University Press, New York (2011).
\end{list}

\end{document}